\newtheorem{dfn}{Definition}[section]
\newtheorem{thm}[dfn]{Theorem}
\newtheorem{lem}[dfn]{Lemma}
\newtheorem{cor}[dfn]{Corollary}
\newtheorem{rem}[dfn]{Remark}
\newtheorem{prop}[dfn]{Proposition}\makeatletter
\newcommand{\x}{{\mathbbm x}}
\newcommand{\dis}{\displaystyle}
\newcommand{\Z}{\mathcal{Z}}
\newcommand{\W}{\mathcal{W}}
\renewcommand{\i}{\mathbbm{i}}
\begin{document}
\title{Free energy of directed polymers in random environment in $1+1$-dimension at high temperature}
\author{Makoto Nakashima \footnote{nakamako@math.nagoya-u.ac.jp, Graduate School of Mathematics, Nagoya University, Furocho, Chikusaku, Nagoya, Japan } }
\date{}
\pagestyle{myheadings}
\markboth{Makoto Nakashima}{On the universality of $1+1$ DPRE}
\maketitle

\sloppy

\begin{abstract}
We consider the free energy $F(\beta)$ of the directed polymers in random environment in $1+1$-dimension. It is known that $F(\beta)$ is of order $-\beta^4$ as $\beta\to 0$ \cite{AleYil,Lac,Wat}.  In this paper, we will prove  that under a certain condition of the potential, \begin{align*}
\lim_{\beta\to 0}\frac{F(\beta)}{\beta^4}=\lim_{T\to\infty}\frac{1}{T}P_\Z\left[\log \Z_{\sqrt{2}}(T)\right] =-\frac{1}{6},
\end{align*}
where $\{\mathcal{Z}_\beta(t,x):t\geq 0,x\in\mathbb{R}\}$ is the unique mild solution to the stochastic heat equation \begin{align*}
\frac{\partial}{\partial t}\Z=\frac{1}{2}\Delta \mathcal{Z}+\beta \mathcal{Z}{\dot{\mathcal W}},\ \ \lim_{t\to 0}\mathcal{Z}(t,x)dx=\delta_{0}(dx),
\end{align*}
where $\mathcal{W}$ is a time-space white noise 
and \begin{align*}
\mathcal{Z}_\beta(t)=\int_\mathbb{R}\mathcal{Z}_\beta(t,x)dx.
\end{align*}

\end{abstract}

{\bf AMS 2010 Subject Classification:} 82D60, 82C44.

\vspace{1em}{\bf Key words:}  Directed polymers,  Free energy, Universality, Continuum directed polymer.

\vspace{1em}
We denote by  $(\Omega, {\cal F},P )$ a probability space. We denote by $P[X]$ the expectation of random variable $X$ with respect to $P$.  Let $\mathbb{N}_0=\{0,1,2,\cdots\}$, $\mathbb{N}=\{1,2,3,\cdots\}$, and $\mathbb{Z}=\{0,\pm 1,\pm 2,\cdots\}$.  
Let $C_{x_1,\cdots,x_p}$ or $C(x_1,\cdots,x_p)$ be a non-random constant which depends only on the parameters $x_1,\cdots,x_p$. 



 \section{Introduction and main result}

      Directed polymers in random environment was introduced by Henly and Huse in the physical literature to study the influence by impurity of media to  polymer chain \cite{HenHus}. In particular, random media is given as i.i.d.\,time-space random variables and the shape of polymer is achieved as time-space path of walk whose law is given by Gibbs measure with the inverse temperature $\beta\geq 0$, that is, time-space trajectory $s$  up to time $n$ appears as a realization of a polymer by the probability \begin{align*}
      \mu_{\beta,N}(s)=\frac{1}{Z_{\beta,N}}\exp\left(\beta H_N(s)\right)P_S^0(S_{[0,N]}=s), \ \ \ \ s\in \left(\mathbb{Z}^{d}\right)^{N+1},
      \end{align*}
where  $H_N(s)$ is a Hamiltonian of  the trajectory $s$, $(S,P_S^0)$ is the simple random walk on $\mathbb{Z}^d$ starting from $x\in\mathbb{Z}^d$, $S_{[0,N]}=(S_0,S_1,\cdots,S_N)\in \left(\mathbb{Z}^{d}\right)^{N+1}$, and $Z_{\beta,N}$ is the normalized constant  which is called the quenched  partition function.

There exists $\beta_1$ such that if $\beta<\beta_1$, then the effects by random environment are weak and if $\beta>\beta_1$, then environment has a meaningful influence. This phase transition is characterized by the uniform integrability of the normalized partition functions. Also, we have another phase transition characterized by the non-triviality of the free energy, i.e. there exists $\beta_2$ such that if $\beta<\beta_2$, then the free energy is trivial and if $\beta>\beta_2$, then the free energy is non-trivial.  The former phase transition is referred to weak versus strong disorder phase transition and the latter one is referred to strong versus very strong disorder phase transition. We have some  known results on the phase transitions: $\beta_1=\beta_2=0$ when $d=1,2$ \cite{ComVar,Lac} and $\beta_2\geq \beta_1>0$ when $d\geq 3$ \cite{Bol,ComShiYos}. In particular, the best lower bound of $\beta_1$ is obtained by Birkner et.al.\,by using size-biased directed polymers and random walk pinning model \cite{Bir2,BirGreden2,Nak3}.

There are a lot of progressions for $\mathbb{Z}^d$-lattice model in three decades\cite{Bol,CarHu,ComShiYos,ComShiYos2,CarHu2,ComVar,Lac,BerLac2}. Recently, the KPZ universality class conjecture for $d=1$ case has been focused and was confirmed for a certain environment \cite{Sep,GeoSep,ComNgu}. The recent progressions are reviewed in \cite{Com}.

\subsection{Model and main result}

To define the model precisely, we introduce some random variables.
\begin{itemize}
\item (Random environment) Let $\{\eta(n,x):(n,x)\in\mathbb{N}\times \mathbb{Z}^d\}$ be $\mathbb{R}$-valued i.i.d.\,random variables with $\lambda(\beta)=\log Q[\exp\left(\beta \eta(n,x)\right)]\in\mathbb{R}$ for any $\beta\in\mathbb{R}$, where $Q$ is the law of $\eta$'s.
\item (Simple random walk) Let $(S,P_S^x)$ be a simple random walk on $\mathbb{Z}^d$ starting from $x\in\mathbb{Z}^d$. We write $P_S=P_S^0$ for simplicity.
\end{itemize}

Then, the Hamiltonian $H(s)$ is given by \begin{align*}
H_N(s)=H_N(s,\eta)=\sum_{k=1}^N \eta(k,s_k),\ \ \ s=(s_0,\cdots,s_N)\in \left(\mathbb{Z}^d\right)^{N+1},
\end{align*}
and \begin{align*}
Z_{\beta,N}=Z_{\beta,N}(\eta)=P_S\left[\exp\left(\beta \sum_{k=1}^N \eta(k,S_k)\right)\right].
\end{align*}

It is clear that \begin{align*}
Q\left[Z_{\beta,N}(\eta)\right]=\exp\left(N\lambda(\beta)\right)
\end{align*}
for any $\beta\in\mathbb{R}$.

The normalized partition function is defined by \begin{align}
 W_{\beta,N}(\eta)&=\frac{Z_{\beta,N}(\eta)}{Q\left[Z_{\beta,N}(\eta)\right]}\notag\\
 &=Z_{\beta,N}(\eta)\exp\left(-N\lambda(\beta)\right)\notag\notag\\
 &=P\left[\exp\left(\beta H_N(S)-N\lambda(\beta)\right)\right]\notag\\
 &=P\left[\prod_{k=1}^N\zeta_{k,S_k}(\beta,\eta)\right],\label{partfn}
 \end{align}
 where we write for each $(n,x)\in\mathbb{N}\times \mathbb{Z}^d$\begin{align*}
 \zeta_{n,x}(\beta,\eta)=\exp\left(\beta \eta(n,x)-\lambda(\beta)\right).
 \end{align*}
 
Then, the following limit exists $Q$-a.s.\,and $L^1(Q)$ \cite{ComShiYos,ComYos}:
\begin{align}
F(\beta)&=\lim_{N\to \infty}\frac{1}{N}\log W_{\beta,N}(\eta)\notag\\
&=\lim_{N\to\infty}\frac{1}{N}Q\left[\log W_{\beta,N}(\eta)\right]\notag\\
&=\sup_{N\geq 1}\frac{1}{N}Q\left[\log W_{\beta,N}(\eta)\right].\label{free}
\end{align}
The limit $F(\beta)$ is a non-random constant and called the quenched free energy. Jensen's inequality implies that \begin{align*}
F(\beta)\leq \lim_{N\to\infty}\frac{1}{N}\log Q\left[W_{\beta,N}(\eta)\right]=0.
\end{align*}
It is known that $F(\beta)<0$ if $\beta\not=0$ when $d=1,2$ \cite{ComVar,Lac} and $F(\beta)=0$ for sufficiently small $|\beta|$ when $d\geq 3$. Recently, the asymptotics of $F(\beta)$ near high temperature ($\beta\to 0$) are studied:\begin{align*}
&F(\beta)\asymp -\beta^{4},\ \ \text{if }d=1
\intertext{\cite{Lac, Wat, AleYil} and}
&\log |F(\beta)|\sim -\frac{\pi}{\beta^2},\ \ \text{if }d=2
\end{align*}
\cite{Lac,BerLac2}.

In particular, it is conjectured that when $d=1$, \begin{align*}
\lim_{\beta\to 0}\frac{1}{\beta^4}F(\beta)=-\frac{1}{24},
\end{align*}
where $\frac{1}{24}$ appears in the literature of stochastic heat equation or KZP equation \cite{BerGia,AmiCorQua}.

Our main result answers this conjecture in some sense.
\begin{thm}\label{main}
Suppose $d=1$. We assume the following concentration inequality:

{There exist $\gamma\geq 1$, $C_1,C_2\in(0,\infty)$ such that for any $n\in\mathbb{N}$ and  for any convex and $1$-Lipschitz function $f:\mathbb{R}^n\to \mathbb{R}$,}
\begin{align}Q\left(|f(\omega_1,\cdots,\omega_n)-Q[f(\omega_1,\cdots,\omega_n)]|\geq t\right)\leq C_1\exp\left(-C_2t^{\gamma}\right),\label{A}
\end{align}
where $1$-Lipschitz means $|f(x)-f(y)|\leq |x-y|$ for any $x,y\in\mathbb{R}^n$ and $\omega_1,\cdots,\omega_n$ are i.i.d.~random variables with the marginal law $Q(\eta(n,x)\in dy)$.

 Then, we have \begin{align*}
\lim_{\beta\to 0}\frac{1}{\beta^4}F(\beta)=-\frac{1}{6}.
\end{align*}
\end{thm}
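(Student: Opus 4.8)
The plan is to reduce the high-temperature asymptotics of the discrete free energy to the free energy of the continuum partition function, and then to evaluate the latter. Write $\mathfrak p(\hat\beta)=\lim_{T\to\infty}\frac1T\E[\log\Z_{\hat\beta}(T)]$ for the continuum free energy, where $\E$ is expectation over the white noise; exactly as for $F$, a restart/Markov argument together with the concavity of $\log$ shows that $T\mapsto\E[\log\Z_{\hat\beta}(T)]$ is superadditive, so that $\frac1T\E[\log\Z_{\hat\beta}(T)]\uparrow\mathfrak p(\hat\beta)=\sup_{T>0}\frac1T\E[\log\Z_{\hat\beta}(T)]\le 0$. The goal then splits into two statements: $\lim_{\beta\to0}\beta^{-4}F(\beta)=\mathfrak p(\sqrt2)$, and $\mathfrak p(\sqrt2)=-\tfrac16$. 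Throughout I use the standing normalization $Q[\eta]=0$, $\Var_Q(\eta)=1$, which is what pins the universal constant.

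For the first statement I would couple the two scales through the intermediate disorder regime. Fix a continuum time $T$ and set $N=N(\beta,T)=\lfloor T\beta^{-4}\rfloor$. Expanding $\lambda(2\beta)-2\lambda(\beta)\sim\Var_Q(\eta)\,\beta^2=\beta^2$ and using the local central limit theorem, the rescaled collision local time $\beta^2\sum_{k\le N}\mathbbm 1\{S^1_k=S^2_k\}$ converges to $2$ times the Brownian local time at $0$ of $B^1-B^2$, the factor $2$ arising because $S^1-S^2$ has variance $2$ per step and lives on $2\mathbb{Z}$; this matches the second moment of $\Z_{\sqrt2}(T)$, so that $W_{\beta,N}\Rightarrow\Z_{\sqrt2}(T)$. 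The assumed concentration inequality \eqref{A}, applied to the convex $1$-Lipschitz map $\eta\mapsto\log W_{\beta,N}$ whose gradient is controlled by the polymer overlap, then supplies enough uniform integrability to upgrade this to $\beta^{-4}\frac1N Q[\log W_{\beta,N}]\to\frac1T\E[\log\Z_{\sqrt2}(T)]$. Since $F(\beta)=\sup_M\frac1M Q[\log W_{\beta,M}]\ge\frac1N Q[\log W_{\beta,N}]$, letting $\beta\to0$ and then $T\to\infty$ yields the lower bound $\liminf_{\beta\to0}\beta^{-4}F(\beta)\ge\mathfrak p(\sqrt2)$.

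The matching upper bound is the main obstacle. Writing $a_\beta(T)=\beta^{-4}\frac1N Q[\log W_{\beta,N}]$ with $N=\lfloor T\beta^{-4}\rfloor$, superadditivity makes $T\mapsto a_\beta(T)$ nondecreasing with $\lim_{T\to\infty}a_\beta(T)=\beta^{-4}F(\beta)$, while the previous step gives $\lim_{\beta\to0}a_\beta(T)=\frac1T\E[\log\Z_{\sqrt2}(T)]$, itself nondecreasing to $\mathfrak p(\sqrt2)$. Establishing $\limsup_{\beta\to0}\beta^{-4}F(\beta)\le\mathfrak p(\sqrt2)$ therefore amounts to interchanging $\lim_{\beta\to0}$ and $\lim_{T\to\infty}$, for which it suffices, by a Moore--Osgood/Dini argument for monotone families, to make the convergence $a_\beta(T)\to\frac1T\E[\log\Z_{\sqrt2}(T)]$ uniform in $T$. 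This is the delicate point: the intermediate-disorder convergence must be quantified uniformly over arbitrarily long polymers, and it is here that \eqref{A} is used essentially, both to control the fluctuations of $\log W_{\beta,N}$ and of $\log\Z_{\sqrt2}(T)$ on a common scale (of smaller order than the mean once $T$ is large) and to propagate the comparison across a partition of $[0,N]$ into blocks of length $\sim\beta^{-4}$.

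Finally I would evaluate $\mathfrak p(\sqrt2)$ by Brownian scaling of the stochastic heat equation. Under $t\mapsto at$, $x\mapsto\sqrt a\,x$ the white noise picks up a factor $a^{-3/4}$ and the initial datum $\delta_0$ a factor $a^{-1/2}$; integrating in $x$ these cancel and one obtains $\Z_{\hat\beta}(aT)\stackrel{d}{=}\Z_{a^{1/4}\hat\beta}(T)$, hence $\mathfrak p(\hat\beta)=\hat\beta^4\,\mathfrak p(1)$ and in particular $\mathfrak p(\sqrt2)=4\,\mathfrak p(1)$. The value $\mathfrak p(1)=-\tfrac1{24}$ is the free energy of the continuum polymer at $\hat\beta=1$, which follows from the exact one-point asymptotics of the narrow-wedge solution of the KPZ equation (the $-T/24$ centering) established in \cite{AmiCorQua,BerGia}. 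Combining, $\mathfrak p(\sqrt2)=4\cdot(-\tfrac1{24})=-\tfrac16$, which together with the two bounds above gives $\lim_{\beta\to0}\beta^{-4}F(\beta)=-\tfrac16$.
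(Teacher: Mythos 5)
Your lower bound is essentially the paper's argument: AKQ convergence at fixed $T$, superadditivity $\frac1N Q[\log W_{\beta,N}]\le F(\beta)$, and uniform integrability of $\log W_{\beta,N}$ extracted from \eqref{A} via the convexity of $\eta\mapsto\log W_{\beta,N}$ and the overlap bound on its gradient (the paper's Lemma \ref{lpbdd}, using Lemma \ref{tal} plus a Paley--Zygmund estimate). The genuine gap is your upper bound. Reducing $\limsup_{\beta\to0}\beta^{-4}F(\beta)\le\mathfrak p(\sqrt2)$ to ``make the convergence $a_\beta(T)\to\frac1T P_\Z[\log\Z_{\sqrt 2}(T)]$ uniform in $T$'' does not prove anything: that uniformity \emph{is} the whole difficulty, and nothing you invoke delivers it. The Moore--Osgood/Dini route would need either monotonicity in $\beta$ (absent) or equicontinuity/uniform rates for the intermediate-disorder convergence over arbitrarily long polymers (which Theorem \ref{thmref} does not give, being a fixed-$T$ distributional limit), and \eqref{A} controls fluctuations of $\log W_{\beta,N}$ around its mean but says nothing about the convergence of the means themselves uniformly in $T$. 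Your parenthetical ``propagate the comparison across a partition of $[0,N]$ into blocks'' is exactly where the argument must live, but $Q[\log W]$ does not factorize over blocks, so the assertion is empty as stated. The paper's mechanism is different in kind: a fractional-moment bound $\frac{1}{NTn}Q[\log W_{\beta_n,NTn}]\le\frac{1}{\theta NTn}\log Q[W^\theta_{\beta_n,NTn}]$ for $\theta\in(0,1)$, coarse-graining over spatial blocks $B^n_y$ of width $\sim n^{1/2}$ using $(a+b)^\theta\le a^\theta+b^\theta$ and the Markov property to factorize over time blocks of length $Tn$, then the continuum limit of the single-block maximum (Lemma \ref{maxsup}), exponential negligibility of distant blocks (Lemma \ref{partition1}), and the continuum fractional-moment estimate $\varlimsup_{\theta\to0}\varlimsup_{T\to\infty}\frac{1}{T\theta}\log P_\Z[\sup_{x\in[-1,1]}(\Z^x_{\sqrt2}(T))^\theta]\le F_\Z(\sqrt2)$ (Lemma \ref{ttheta}). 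Note also that your claim that \eqref{A} ``is used essentially'' for the upper bound contradicts the paper, which remarks that the limsup holds for general environments without \eqref{A}.

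A second, smaller gap is the evaluation $\mathfrak p(\sqrt2)=-\frac16$. Your scaling step $\Z_{\hat\beta}(aT)\stackrel{d}{=}\Z_{a^{1/4}\hat\beta}(T)$ is correct (it is Theorem \ref{cdpprop}(iii) after integrating in space), but $\mathfrak p(1)=-\frac1{24}$ does not directly ``follow'' from \cite{AmiCorQua}: that result gives convergence \emph{in probability} of $\frac1T\log\Z_1(T,0)$ for the \emph{point-to-point} partition function (Theorem \ref{limitcdp}). You still need (i) an upgrade to convergence of expectations, which the paper obtains from the Liu--Watbled exponential martingale inequality \eqref{thetaz} (Lemma \ref{wttheta}, Corollary \ref{L1con}), and (ii) a passage from $\Z_{\sqrt2}(T,0)$ to the spatially integrated $\Z_{\sqrt2}(T)$, which the paper carries out in the proof of Lemma \ref{freeenecdp} via the decomposition $I_1'+I_2'$, a time reversal, and Garsia--Rodemich--Rumsey/hypercontractivity bounds. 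Without these two steps, and above all without a working substitute for the coarse-graining upper bound, the proposal does not close.
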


The constant $-\frac{1}{6}$ appears as the limit of the free energy of the continuum directed polymers (see Lemma \ref{freeenecdp}):
 \begin{align*}
F_\Z(\sqrt{2})=\lim_{T\to\infty}\frac{1}{T}P_\Z\left[\log \int_{\mathbb{R}}\mathcal{Z}_{\sqrt{2}}^x(T,y)dx\right]=-\frac{1}{6},
\end{align*}
where $\mathcal{Z}^x_{\beta}(t,y)$ is the unique mild solution to the stochastic heat equation \begin{align*}
\partial \mathcal{Z}=\frac{1}{2}\Delta \mathcal{Z}+\beta\mathcal{Z}\dot{\mathcal{W}},
\end{align*}
with  the initial condition $\dis \lim_{t\to 0 }\mathcal{Z}(t,y)dy=\delta_x(dy)$ and $\mathcal{W}$ is a time-space white noise and $P_\Z$ is the law of $\Z_\beta^x$.
We write \begin{align*}
\Z_{\beta}^x(t)=\int_\mathbb{R}\Z_{\beta}^x(t,y)dy
\end{align*}
and $\Z_\beta(t)=\Z_\beta^0(t)$ for simplicity. $-\frac{1}{6}$ seems to be different from the value  $-\frac{1}{24}$ in  the conjecture. However, it has the relation \begin{align*}
-\frac{1}{6}=-\frac{(\sqrt{2})^4}{24}
\end{align*}
and $\sqrt{2}$ appears from the periodicity of simple random walk. Thus, the conjecture it true essentially.

\begin{rem}
Assumption (\ref{A}) are given in \cite{CarTonTor} for pinning model. Under this assumption, $\{\eta(n,x):n\in\mathbb{N},x\in\mathbb{Z}\}$ satisfies a good concentration property (see Lemma \ref{tal}). It is known that the following distribution satisfies (\ref{A}).
\begin{enumerate}
\item If $\eta(n,x)$ is bounded, then (\ref{A}) holds for $\gamma=2$ \cite[Corollary 4.10]{Led}.
\item If the law of $\eta(n,x)$ satisfies a log-Sobolev inequality (for example Gaussian distribution), then (\ref{A}) holds with $\gamma=2$ \cite[Theorem 5.3, Corollary 5.7]{Led}
\item If the law of $\eta(n,x)$ has the probability density with $c_\gamma \exp\left(-|x|^\gamma\right)$, then (\ref{A}) holds with $\gamma\in [1,2]$ \cite[Proposition 4.18,Proposition 4.19]{Led}.
\end{enumerate}
\end{rem}

\subsection{Organization of this paper} This paper is structured as follows:
\begin{itemize}
\item We first give the strategy of the proof of our main result in section \ref{2}.
\item Section \ref{3} is devoted to prove the statements mentioned in section \ref{2} related to  discrete directed polymers. 
\item Section \ref{4} is also devoted to  prove the statement mentioned in section \ref{2} related to continuum directed polymers.
\end{itemize}


\section{Proof of Theorem \ref{main}}\label{2}

In this section, we give the proof of Theorem \ref{main}. 
\subsection{Proof of limit inferior}
The idea is simple. Alberts, Khanin and Quastel proved the following limit theorem. 

\begin{thm}\label{thmref}$($\cite{AlbKhaQua}$)$ Suppose $d=1$. Let $\{\beta_n:n\geq 1\}$ be an $\mathbb{R}$-valued sequence with $\beta_n\to 0$ and $r>0$. Then,  
the sequence $\dis \{W_{r\beta_n,\lfloor T\beta_n^{-4}\rfloor}(\eta):n\geq 1\}$ is $L^2$-bounded and converges in distribution to a random variable $\Z_{r\sqrt{2}}(T)$ for each $T>0$.
\end{thm}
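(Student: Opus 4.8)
The plan is to route the comparison through chaos expansions, matching the discrete multilinear forms that build $W_{r\beta_n,N_n}$ to the Gaussian chaoses that build $\Z_{r\sqrt2}(T)$, and then to control the tail uniformly in $n$. First I would center the environment: writing $\zeta_{k,x}(r\beta_n,\eta)=1+\xi^{(n)}_{k,x}$ with $\xi^{(n)}_{k,x}=\exp(r\beta_n\eta(k,x)-\lambda(r\beta_n))-1$, the $\xi^{(n)}_{k,x}$ are i.i.d.\ and centred, with common variance $\sigma_n^2:=\exp(\lambda(2r\beta_n)-2\lambda(r\beta_n))-1\sim r^2\beta_n^2\lambda''(0)$ as $n\to\infty$. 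Expanding the product defining $W_{\beta,N}$ in \eqref{partfn} and taking the walk expectation gives the finite chaos expansion
\begin{align*}
W_{r\beta_n,N_n}(\eta)=\sum_{j=0}^{N_n}\ \sum_{\substack{0<k_1<\dots<k_j\le N_n\\ x_1,\dots,x_j\in\mathbb{Z}}}\Bigl(\prod_{i=1}^{j}p_{k_i-k_{i-1}}(x_i-x_{i-1})\Bigr)\prod_{i=1}^{j}\xi^{(n)}_{k_i,x_i},
\end{align*}
where $N_n=\lfloor T\beta_n^{-4}\rfloor$, $k_0=0$, $x_0=0$, and $p_m$ is the $m$-step SRW transition kernel. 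The limit object admits the matching Wiener chaos expansion
\begin{align*}
\Z_{r\sqrt2}(T)=\sum_{j\ge0}(r\sqrt2)^{j}\int_{0<t_1<\dots<t_j<T}\int_{\mathbb{R}^j}\prod_{i=1}^{j}\rho_{t_i-t_{i-1}}(y_i-y_{i-1})\,\W(dt_1\,dy_1)\cdots\W(dt_j\,dy_j),
\end{align*}
with $t_0=0$, $y_0=0$, and $\rho$ the standard heat kernel; the final heat kernel drops out after integrating $\Z_{r\sqrt2}(T,\cdot)$ over space.

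Second, I would establish the $L^2$-boundedness via a replica computation. Introducing an independent copy $S'$ of the walk and using $Q[\zeta_{k,S_k}\zeta_{k,S'_k}]=1+\sigma_n^2\,\mathbbm{1}\{S_k=S'_k\}$, one gets
\begin{align*}
Q\bigl[W_{r\beta_n,N_n}^2\bigr]=P_{S,S'}\Bigl[\prod_{k=1}^{N_n}\bigl(1+\sigma_n^2\,\mathbbm{1}\{S_k=S'_k\}\bigr)\Bigr]=\sum_{j=0}^{N_n}\sigma_n^{2j}\!\!\!\sum_{0<k_1<\dots<k_j\le N_n}\prod_{i=1}^{j}u(k_i-k_{i-1}),
\end{align*}
where $u(m)=P(S_m=S'_m)=4^{-m}\binom{2m}{m}\sim(\pi m)^{-1/2}$ for two walks started together. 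Crucially $u(m)\sim(\pi m)^{-1/2}$ is exactly twice the continuum replica density $\int_{\mathbb{R}}\rho_t(y)^2\,dy=(2\sqrt{\pi t})^{-1}$, and it is this factor $2$, i.e.\ $2r^2=(r\sqrt2)^2$, that promotes the coupling from $r$ to $r\sqrt2$; this is where the periodicity of the simple random walk enters. Rescaling $k_i=N_n s_i$ and using $\sigma_n^2 N_n^{1/2}\to r^2\lambda''(0)T^{1/2}$, each summand converges to $(r^2\lambda''(0)T^{1/2})^{j}/\Gamma(j/2+1)$, which (after the standard normalization $\lambda''(0)=\Var_Q(\eta)=1$, otherwise absorbed into $r$) is precisely the $L^2$-mass of the $j$-th Wiener chaos of $\Z_{r\sqrt2}(T)$. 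Bounding the discrete summands by this convergent Mittag--Leffler-type series yields $\sup_n Q[W_{r\beta_n,N_n}^2]<\infty$, the claimed $L^2$-boundedness.

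Third, to upgrade to convergence in distribution I would prove term-by-term convergence of the chaoses. By the local limit theorem the rescaled discrete kernels converge, under the diffusive substitution $(k_i,x_i)=(N_n s_i,\sqrt{N_n}\,y_i)$, to the heat kernels $\rho_{s_i-s_{i-1}}(y_i-y_{i-1})$ in the Riemann-sum sense, so that an invariance principle for multilinear forms (via the fourth-moment theorem or a Lindeberg replacement, using that the maximal single-site influence is negligible) gives joint convergence, for each fixed truncation level $K$, of the partial sum $\sum_{j\le K}$ of the discrete expansion to the partial sum $\sum_{j\le K}$ of the Wiener expansion. Combined with the uniform-in-$n$ $L^2$-smallness of the tail $\sum_{j>K}$ from the second step, letting $K\to\infty$ gives $W_{r\beta_n,N_n}\Rightarrow\Z_{r\sqrt2}(T)$.

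The hard part will be the term-by-term kernel convergence together with the uniform tail control, carried out simultaneously. One must show that the simplicial discrete sums converge to the continuum multiple integrals despite the singularity of $\rho_{t_i-t_{i-1}}$ as $t_i\downarrow t_{i-1}$ (coinciding times / short gaps $k_i-k_{i-1}$ of order $1$), and at the same time produce a single summable bound, uniform in $n$ \emph{and} in the chaos order, so that dominated convergence can be invoked across all orders at once. This forces a quantitative use of the local limit theorem with explicit error control near the diagonal, which is the main technical obstacle.
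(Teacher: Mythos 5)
Your proposal is correct and, in fact, reproduces the actual proof: the paper itself proves nothing here --- Theorem \ref{thmref} is quoted from Alberts--Khanin--Quastel \cite{AlbKhaQua} --- and their argument is precisely your route, namely the discrete polynomial chaos expansion in the centred variables $e_n(k,x)$, the replica second-moment computation with $u(m)=p_{2m}(0)\sim(\pi m)^{-1/2}$ yielding the uniform Mittag--Leffler-type bound $\sum_{j}C^{j}T^{j/2}/\Gamma(\frac{j}{2}+1)$ (the same estimate this paper re-uses, citing \cite{AlbKhaQua}, in the proofs of Lemma \ref{maxsup} and Lemma \ref{partition1}), and term-by-term convergence of the multilinear forms to Wiener chaos via an invariance principle for polynomial chaos with vanishing maximal influence, with your identification of the factor $2$ --- $p_{2m}(0)$ versus $\int_{\mathbb{R}}\rho_m(y)^2dy=(2\sqrt{\pi m})^{-1}$, a parity effect of the walk --- as the source of the coupling $r\sqrt{2}$ being exactly right. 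The ``hard part'' you flag (near-diagonal kernel control uniformly over chaos orders) is genuine but mild: the singularity $\prod_i(s_i-s_{i-1})^{-1/2}$ is integrable on the simplex, so the local limit theorem with uniform Gaussian domination plus the $L^2$ tail bound closes the argument as in \cite{AlbKhaQua}.
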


Combining this with (\ref{free}), we have that \begin{align*}
\frac{1}{\lfloor T\beta_n^{-4}\rfloor}Q\left[\log W_{\beta_n,\lfloor T\beta_n^{-4}\rfloor}(\eta)\right]\leq F(\beta_n)
\end{align*}for any $n\geq 1$ and $t>0$, i.e. \begin{align}
\frac{\beta_n^{-4}}{\lfloor T\beta_n^{-4}\rfloor}Q\left[\log W_{\beta_n,\lfloor T\beta_n^{-4}\rfloor}(\eta)\right]\leq \frac{1}{\beta_n^{4}}F(\beta_n).\label{inf}
\end{align}

Thus, if $ \log W_{\beta_n,\lfloor T\beta_n^{-4}\rfloor}(\eta)$ is uniformly integrable, then we have that  \begin{align}
\frac{1}{T}P_\Z\left[\log \Z_{\sqrt{2}}(T)\right]\leq \varliminf_{n\to \infty}\frac{1}{\beta_n^{4}}F(\beta_n).\label{inf}
\end{align}

Taking the limit in $T$, we have that \begin{align}
\lim_{T\to \infty}\frac{1}{T}P_\Z\left[\log \Z_{\sqrt{2}}(T)\right]\leq  \varliminf_{n\to \infty}\frac{1}{\beta_n^{4}}F(\beta_n).\label{limcdp}
\end{align}

Therefore, it is enough to show the following lemmas.
\begin{lem}\label{lpbdd}
Suppose $d=1$. We assume (\ref{A}).
Then, for each $T>0$\begin{align*}
\log W_{\beta_n,\lfloor T\beta_n^{-4}\rfloor }(\eta) \text{ is uniformly integrable.}
\end{align*}
\end{lem}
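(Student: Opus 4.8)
The plan is to prove the stronger statement that $\{\log W_{\beta_n,N_n}(\eta)\}_{n\ge 1}$ is bounded in $L^2(Q)$, where $N_n=\lfloor T\beta_n^{-4}\rfloor$; uniform integrability then follows from the de~la~Vall\'ee--Poussin criterion. I would split $\log W_{\beta_n,N_n}=\log^+W_{\beta_n,N_n}-\log^-W_{\beta_n,N_n}$ and treat the two parts separately. The positive part is immediate: since $\log^+x\le x$ for $x>0$, we have $(\log^+W_{\beta_n,N_n})^2\le W_{\beta_n,N_n}^2$, and Theorem~\ref{thmref} gives $\sup_n Q[W_{\beta_n,N_n}^2]<\infty$; hence $\sup_n Q[(\log^+W_{\beta_n,N_n})^2]<\infty$. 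So the whole difficulty lies in the lower tail, i.e. in bounding $\sup_n Q[(\log^-W_{\beta_n,N_n})^2]$.

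For the lower tail I would exploit the concentration hypothesis (\ref{A}) through the convex structure of the partition function. Viewed as a function of the finitely many environment variables $\{\eta(k,x):1\le k\le N,\ |x|\le k\}$ in the light cone, $\log Z_{\beta,N}$ is convex (a log-sum-exp of affine functions), with gradient $\partial_{\eta(k,x)}\log Z_{\beta,N}=\beta\,\mu_{\beta,N}(S_k=x)$, so that $\|\nabla\log Z_{\beta,N}\|_2^2=\beta^2\sum_{k=1}^N\mu_{\beta,N}^{\otimes2}(S_k=S_k')$. Since $\log W_{\beta,N}=\log Z_{\beta,N}-N\lambda(\beta)$ differs from $\log Z_{\beta,N}$ by a deterministic constant, it has the same gradient and the same fluctuations. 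The crude bound $\mu_{\beta,N}^{\otimes2}(S_k=S_k')\le 1$ only gives the Lipschitz constant $\beta_n\sqrt{N_n}\sim\sqrt T\,\beta_n^{-1}\to\infty$, which is far too weak: fed into (\ref{A}) it controls fluctuations at the diverging scale $\beta_n^{-1}$ and is vacuous at the $O(1)$ scale on which $\log W_{\beta_n,N_n}$ actually lives. The argument must therefore replace this worst-case constant by the overlap-governed scale $\beta_n N_n^{1/4}=O(T^{1/4})$.

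Concretely, I would extract from (\ref{A}) the concentration property for convex functions (the Talagrand-type convex-distance bound alluded to after the statement), controlling the deviation of $\log W_{\beta_n,N_n}$ below its median $M_n$ at the scale set by the realized gradient $\beta_n\bigl(\sum_{k\le N_n}\mu^{\otimes2}(S_k=S_k')\bigr)^{1/2}$, and combine it with the estimate that the replica overlap $\sum_{k=1}^{N_n}\mu_{\beta_n,N_n}^{\otimes2}(S_k=S_k')$ is of order $\sqrt{N_n}\sim\beta_n^{-2}$ with high $Q$-probability. This is where Theorem~\ref{thmref} re-enters: the number of replica intersections is tied to $Q[W_{\beta_n,N_n}^2]=P_S^{\otimes2}[\exp(\lambda_2(\beta_n)\sum_{k\le N_n}\mathbf 1_{S_k=S_k'})]$ with $\lambda_2(\beta_n)=\lambda(2\beta_n)-2\lambda(\beta_n)\sim\beta_n^2$, so that its boundedness forces $\sum_{k\le N_n}\mu^{\otimes2}(S_k=S_k')\lesssim\lambda_2(\beta_n)^{-1}\sim\beta_n^{-2}$. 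The outcome is a bound $Q(\log W_{\beta_n,N_n}\le M_n-t)\le C_1\exp(-C_2(t/s)^\gamma)$ with $s=O(T^{1/4})$ uniform in $n$.

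It then remains to control the median $M_n$. Markov's inequality together with $Q[W_{\beta_n,N_n}]=1$ gives $M_n\le\log 2$. For a lower bound, Paley--Zygmund and the $L^2$-bound give $Q(W_{\beta_n,N_n}\ge\tfrac12)\ge\tfrac14\,Q[W_{\beta_n,N_n}]^2/Q[W_{\beta_n,N_n}^2]\ge p_0>0$ uniformly in $n$; were $M_n$ very negative, the matching upper-deviation bound around the median would make $Q(\log W_{\beta_n,N_n}\ge-\log2)$ super-exponentially small, contradicting $Q(\log W_{\beta_n,N_n}\ge-\log2)\ge p_0$, whence $M_n\ge -C$ uniformly. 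With $|M_n|$ and $s$ both bounded, integrating $Q[(\log^-W_{\beta_n,N_n})^2]=\int_0^\infty 2t\,Q(\log W_{\beta_n,N_n}<-t)\,dt$ against the lower-tail bound gives a finite value uniform in $n$, completing the proof. I expect the genuine obstacle to be exactly the overlap estimate together with the convex-distance concentration: showing that the lower tail of $\log W_{\beta_n,N_n}$ truly lives at scale $\beta_n N_n^{1/4}=O(1)$ rather than the worst-case $\beta_n\sqrt{N_n}$, since this is precisely where the $L^2$-boundedness of Theorem~\ref{thmref} must be converted into quenched control of the replica overlap.
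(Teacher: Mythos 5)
Your proposal is correct and takes essentially the same route as the paper: convexity of $\log W_{\beta_n,N_n}$ with gradient squared equal to $\beta_n^2$ times the replica overlap, the Caravenna--Toninelli--Torri concentration inequality for convex functions derived from (\ref{A}), and Paley--Zygmund plus the annealed bound $Q[W^2]=P_{S,S'}[\exp(\lambda_2(\beta_n)L_{N_n}(S,S'))]$ (controlled via Theorem \ref{thmref}, applied at a slightly larger coupling $r\beta_n$) to produce a positive-probability event on which $W\geq\tfrac12$ and the overlap is $O(\beta_n^{-2})$, yielding a uniform lower-tail bound and hence $L^2$-boundedness. The only difference is presentational: you anchor at the median $M_n$ (where your claimed upper-deviation bound is not super-exponential --- it carries a non-decaying additive term from the probability that the overlap is large, though the contradiction still goes through after fixing constants), whereas the paper anchors directly at $a=-\log 2$ via the good event $A_n$, avoiding the median entirely.
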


\begin{lem}\label{freeenecdp}
We have the limit \begin{align*}
F_\Z(\sqrt{2})=\lim_{T\to \infty}\frac{1}{T}P_\Z\left[\log \Z_{\sqrt{2}}(T)\right]=\sup_{T>0}\frac{1}{t}P_\Z\left[\log \Z_{\sqrt{2}}(T)\right]=-\frac{1}{6}.
\end{align*}
\end{lem}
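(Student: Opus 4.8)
The plan is to treat the two assertions separately: the existence of the limit together with its identification as a supremum, and then the evaluation of the constant. Write $f(T)=P_\Z[\log\Z_{\sqrt2}(T)]$. I would first record that $f(T)$ is finite: the bound $f(T)\le\log P_\Z[\Z_{\sqrt2}(T)]=0$ is Jensen's inequality applied to the mean-one martingale $t\mapsto\Z_{\sqrt2}(t)$, while finiteness of $P_\Z[(\log\Z_{\sqrt2}(T))^-]$ follows from the strict positivity and the negative-moment (lower-tail) estimates for the total mass of the mild solution. The heart of this part is superadditivity of $f$. By the propagator property of the mild solution, $\Z_{\sqrt2}^0(s+t,z)=\int_\mathbb{R}\Z_{\sqrt2}^0(s,y)\,\widetilde\Z_{\sqrt2}^y(t,z)\,dy$, where $\widetilde\Z$ is driven by the noise on the window $[s,s+t]$ and is hence independent of $\mathcal F_s$. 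Integrating in $z$ and applying Jensen's inequality to the endpoint probability measure $\Z_{\sqrt2}^0(s,y)\,dy/\Z_{\sqrt2}(s)$ gives
\begin{align*}
\log\Z_{\sqrt2}(s+t)\ \ge\ \log\Z_{\sqrt2}(s)+\int_\mathbb{R}\frac{\Z_{\sqrt2}^0(s,y)}{\Z_{\sqrt2}(s)}\,\log\widetilde\Z_{\sqrt2}^y(t)\,dy .
\end{align*}
Taking $P_\Z$, conditioning on $\mathcal F_s$, and using spatial translation invariance of the noise (so that $P_\Z[\log\widetilde\Z_{\sqrt2}^y(t)]=f(t)$ for every $y$) yields $f(s+t)\ge f(s)+f(t)$. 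The continuous version of Fekete's lemma for superadditive functions then gives $\lim_{T\to\infty}f(T)/T=\sup_{T>0}f(T)/T\le 0$.

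To evaluate this velocity I would reduce to unit noise strength by the exact scaling of the equation. The space-time white noise obeys $\dot{\mathcal W}(\beta^4t,\beta^2x)\overset{d}{=}\beta^{-3}\dot{\mathcal W}(t,x)$, and the heat kernel $p_t(x)=(2\pi t)^{-1/2}e^{-x^2/2t}$ obeys $p_{\beta^4 t}(\beta^2 x)=\beta^{-2}p_t(x)$. Substituting $s_i=\beta^4 t_i$, $y_i=\beta^2 x_i$ term by term in the Wiener chaos expansion $\Z_\beta(t)=\sum_{k\ge 0}\beta^k J_k(t)$ of the total mass, the $k$-th kernel picks up a factor $\beta^{-2k}$ and the $k$-fold white-noise integral a factor $\beta^{3k}$, so each term is multiplied by $\beta^{k}$; consequently
\begin{align*}
\Z_\beta(t)\ \overset{d}{=}\ \Z_1(\beta^4 t).
\end{align*}
Therefore $P_\Z[\log\Z_\beta(t)]=P_\Z[\log\Z_1(\beta^4 t)]$ and $F_\Z(\beta)=\beta^4 F_\Z(1)$; in particular $F_\Z(\sqrt2)=4F_\Z(1)$.

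It remains to show $F_\Z(1)=-\tfrac1{24}$. This is the one-point free-energy velocity of the continuum directed polymer, and I would import it from the exactly solvable one-point law of the narrow-wedge KPZ equation: $\log\Z_1(T,0)=-\tfrac{T}{24}+(T/2)^{1/3}\Upsilon_T$, where $\Upsilon_T$ is tight, converges in law to the GUE Tracy--Widom distribution, and (by the known one-sided tail bounds for the KPZ equation) has uniformly integrable negative part, so that $T^{-1}P_\Z[\log\Z_1(T,0)]\to-\tfrac1{24}$. Passing from the point-to-point quantity $\log\Z_1(T,0)$ to the point-to-line quantity $\log\Z_1(T)=\log\int_\mathbb{R}\Z_1(T,y)\,dy$ alters the free energy only at the fluctuation scale $O(T^{1/3})$ — indeed $-T/24$ is the initial-condition-independent bulk velocity — hence not the velocity. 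Combining, $F_\Z(\sqrt2)=4\cdot(-\tfrac1{24})=-\tfrac16$, which also equals the supremum by the first part.

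The superadditivity step is routine once the propagator property and the negative-moment bounds are in hand. The genuine obstacle is the exact constant $-\tfrac1{24}$: it is not accessible by soft arguments, since the moment (replica) method overshoots — $P_\Z[\Z(T)^p]$ grows like $e^{cp^3T}$, so the annealed rate is $+\infty$ — and one must invoke either the exactly solvable one-point distribution of the KPZ equation or the linear growth velocity of its two-sided Brownian stationary measure. A secondary technical point, which I would settle through the KPZ one-sided tail estimates, is to verify that the point-to-line normalization and the tails of $\Upsilon_T$ genuinely leave the leading $-T/24$ unchanged, i.e. that $P_\Z[\log\Z_1(T)]$ and $P_\Z[\log\Z_1(T,0)]$ share the same velocity.
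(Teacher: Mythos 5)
Your skeleton is sound as far as it goes, and parts of it genuinely overlap the paper: the evaluation ultimately rests on the Amir--Corwin--Quastel one-point result (Theorem \ref{limitcdp}) combined with the scaling property (Theorem \ref{cdpprop}(iii)), exactly as you propose via $\Z_\beta(t)\stackrel{d}{=}\Z_1(\beta^4t)$ and $F_\Z(\sqrt2)=4F_\Z(1)$; moreover your superadditivity argument (Chapman--Kolmogorov, Jensen against the endpoint measure, stationarity, Fekete) is a clean justification of the $\lim=\sup$ clause that the paper leaves implicit. The problem is that the two steps you defer are precisely where all the work of Section 4 of the paper lies, and as proposed neither goes through. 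First, Theorem \ref{limitcdp} is convergence \emph{in probability}, and to pass to convergence of $\frac1T P_\Z[\log\Z_1(T,0)]$ you must control the negative part of $\log\Z_1(T,0)+T/24$; your plan is uniform integrability of $\Upsilon_T^-$ via ``known one-sided tail bounds for the KPZ equation,'' but uniform-in-$T$ lower-tail estimates at the $T^{1/3}$ fluctuation scale were not available at this level of generality (Moreno's negative moments, Corollary \ref{cor}, are for fixed time and do not by themselves give this). The paper circumvents the issue entirely with a self-contained concentration inequality at scale $\sqrt T$: it decomposes $\log\Z_{\sqrt2}(T)-P_\Z[\log\Z_{\sqrt2}(T)]$ into martingale differences $V_i^T$ over unit time slices, bounds the conditional exponential moments of $\pm V_i^T$ by Chapman--Kolmogorov, Jensen with respect to the measure $\mu_T^{(i)}$, and Moreno's negative-moment bound, and then applies the Liu--Watbled exponential inequality to get (\ref{thetaz}) and hence Corollary \ref{L1con}. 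Concentration at scale $o(T)$ suffices for the velocity, so the deep tail information you invoke is both unnecessary and, at the needed uniformity, unjustified.

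Second, your passage from the point-to-point to the point-to-line quantity --- ``$-T/24$ is the initial-condition-independent bulk velocity, hence spatial integration does not change the velocity'' --- assumes exactly what is to be proved: within this lemma the equality of the two velocities is the theorem, not a citable fact. Note that only the lower bound is soft: by Theorem \ref{cdpprop}(v) the law of $\Z_{\sqrt2}(T,y)/\rho_T(y)$ is independent of $y$, so restricting the integral to $[-1,1]$ and applying Jensen gives $\varliminf_T\frac1TP_\Z[\log\Z_{\sqrt2}(T)]\geq-\frac16$ cheaply. The upper bound is where the paper's proof of Lemma \ref{freeenecdp} spends its effort: it splits $\Z_{\sqrt2}(T)=\sum_k\int_{2k-1}^{2k+1}\Z_{\sqrt2}(T,x)\,dx$ plus tails beyond $|x|\gtrsim T^3$, kills the tails by Wiener-chaos estimates (the $I_2'$ analysis, as in Lemma \ref{zt}), controls each block by $\sup_{x\in[-1,1]}\exp(\theta A_{\sqrt2}(T,x))$ against the summable weights $\int_{-1}^1\rho_T(x+2k)^\theta dx$, and after time reversal bounds the supremum by the one-point free energy through fractional moments, factorization at time $1$ (the $I_1'$ term), H\"older, negative moments, and Garsia--Rodemich--Rumsey continuity estimates (Lemma \ref{zc}). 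Some quantitative spatial maximal estimate of this kind for $x\mapsto\Z_{\sqrt2}(T,x)$ is unavoidable for the upper bound, and nothing in your proposal supplies it. Until these two steps are filled in, what you have is a correct reduction scheme, not a proof.
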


We should take $n=\lfloor \beta_n^{-4}\rfloor$ in general. However, we may consider the case \begin{align*} \beta_n=n^{-\frac{1}{4}}\end{align*}
without loss of generality.

\subsection{Proof of limit superior}
We use the coarse graining argument to prove the limit superior.

We divide $\mathbb{Z}$ into the blocks with size of order $n^{1/2}$: For $y\in \mathbb{Z}$, we set \begin{align*}
B_y^n=\left[(2y-1)\lfloor n^{1/2}\rfloor+y,(2y+1)\lfloor n^{1/2}\rfloor+y\right].
\end{align*}

For each $\ell\in \mathbb{N}$, we denote  by $B_y^{n}(\ell)$ the set of lattice $z\in \mathbb{Z}$ such that \begin{align*}
z-\ell\in 2\mathbb{Z},
\end{align*}
that is the set of lattices in $B_y^n$ which can be reached by random walk $(S,P_S)$ at time $\ell$.

We will give an idea of the proof.
It is clear by Jensen's inequality that for each $\theta \in (0,1)$, $T\in\mathbb{N}$, and $N\in\mathbb{N}$,
\begin{align}
\begin{array}{ll}&\dis \frac{1}{NTn}Q\left[\log W_{\beta_n,NTn}(\eta)\right]\\
&\dis =\frac{1}{\theta NTn}Q\left[\log W^\theta_{\beta_n,NTn}(\eta)\right]\\
&\dis \leq \frac{1}{\theta NTn}\log Q\left[W^\theta_{\beta_n,NTn}(\eta)\right].\end{array}\label{thetaineq}
\end{align}
We will take the limit superior of both sides in $N\to\infty$, $n\to \infty$, $T\to \infty$, and then $\theta\to 0$ in this order. Then, it is clear that  \begin{align*}
\varlimsup_{n\to \infty}\frac{1}{\beta_n^4}F(\beta_n)\leq \varlimsup_{\theta\to 0}\varlimsup_{T\to \infty}\varlimsup_{n\to \infty}\varlimsup_{N\to\infty} \frac{1}{\theta NTn}\log Q\left[W^\theta_{\beta_n,NTn}(\eta)\right].
\end{align*}

We would like to estimate the right hand side.

\vspace{2em}

For $\theta \in (0,1)$, we have that \begin{align*}
Q\left[W^\theta_{\beta_n,NTn}(\eta)\right]\leq \sum_{Z}Q\left[\hat{W}_{\beta_n,NTn}^\theta(\eta,Z)\right],
\end{align*}
where for $Z=(z_1,\cdots,z_N)\in\mathbb{Z}^N$\begin{align*}
&\hat{W}_{\beta_n,NTn}(\eta,Z)\\
&=P_S\left[\prod_{i=1}^{NTn}\zeta_{i,S_i}(\beta_n,\eta):S_{\ell Tn}\in B_{z_i}^{n}(\ell Tn)\right],
\end{align*}
and we have used the fact  $(a+b)^\theta\leq a^\theta+b^\theta$ for $a,b\geq 0$ and $\theta\in (0,1)$. Then, we have from the Markov property that \begin{align}
Q\left[W^\theta_{\beta_n,NTn}(\eta)\right]&\leq \left(\sum_{z\in \mathbb{Z}}Q\left[\max_{x\in B_0^n(0)}P_S^x\left[\prod_{i=1}^{Tn}\zeta_{i,S_i}(\beta_n,\eta):S_{Tn}\in B_z^n\right]^\theta\right]\right)^N.\label{maxtheta}
\end{align}
Combining (\ref{thetaineq}) and (\ref{maxtheta}), we have that \begin{align*}
\frac{1}{\beta_n^4}F(\beta_n)\leq \frac{1}{\theta T}\log \sum_{z\in \mathbb{Z}}Q\left[\max_{x\in B_0^n(0)}P_S^x\left[\prod_{i=1}^{Tn}\zeta_{i,S_i}(\beta_n,\eta):S_{Tn}\in B_z^n\right]^\theta\right].
\end{align*}

Here, we have the following lemmas:

\begin{lem}\label{maxsup}
We have that \begin{align*}
\lim_{n\to\infty}Q\left[\max_{x\in B_0^n(0)}P_S^x\left[\prod_{i=1}^{Tn}\zeta_{i,S_i}(\beta_n,\eta)\right]^\theta\right]=P_\Z\left[\sup_{x\in [-1,1]}\left(\Z_{\sqrt{2}}^x(T)\right)^\theta\right].
\end{align*}
\end{lem}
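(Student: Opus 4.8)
The plan is to lift the one-point, in-distribution statement of Theorem~\ref{thmref} to a convergence of the entire field of partition functions indexed by the starting point, and then to combine a continuous-mapping argument with a uniform integrability estimate. Concretely, writing $W^x_{\beta_n,Tn}(\eta)=P_S^x\left[\prod_{i=1}^{Tn}\zeta_{i,S_i}(\beta_n,\eta)\right]$, I would, for $a\in[-1,1]$, let $x_n(a)\in B_0^n(0)$ be the even lattice point closest to $a\lfloor n^{1/2}\rfloor$ and define the rescaled process $\widetilde W_n(a)=W^{x_n(a)}_{\beta_n,Tn}(\eta)$, interpolated linearly in $a$ so that $\widetilde W_n\in C([-1,1])$. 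Since a linear interpolation attains its maximum at a node, $\max_{x\in B_0^n(0)}(W^x_{\beta_n,Tn})^\theta=\sup_{a\in[-1,1]}\widetilde W_n(a)^\theta$, so it suffices to show $\widetilde W_n\Rightarrow \Z^{\,\cdot}_{\sqrt2}(T)$ in $C([-1,1])$ together with uniform integrability of the maxima.

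First I would prove the finite-dimensional convergence $(\widetilde W_n(a_1),\dots,\widetilde W_n(a_k))\Rightarrow(\Z^{a_1}_{\sqrt2}(T),\dots,\Z^{a_k}_{\sqrt2}(T))$ for any $a_1,\dots,a_k\in[-1,1]$. For a single point this is Theorem~\ref{thmref}, noting that translation invariance of $\eta$ makes the law of $W^x_{\beta_n,Tn}$ independent of $x$; the multi-point statement is the natural extension of the polynomial-chaos/Lindeberg argument underlying that theorem, the discrete chaos expansions of the vector of partition functions converging jointly to the multiple stochastic integrals that represent $(\Z^{a_j}_{\sqrt2}(T))_j$, with the diffusive space-time rescaling sending the discrete heat kernels between the $x_n(a_j)$ to the continuum ones.

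The principal obstacle is the uniform-in-$n$ spatial regularity needed for tightness in $C([-1,1])$ and for the uniform integrability of $\sup_a\widetilde W_n(a)^\theta$. The diffusive $L^2$ increment bound $Q\big[|\widetilde W_n(a)-\widetilde W_n(a')|^2\big]\le C|a-a'|$ is only borderline for a Kolmogorov criterion, so I would first upgrade the integrability of the field: using the convexity and Lipschitz continuity of $\eta\mapsto\log W^x_{\beta_n,Tn}(\eta)$ together with the concentration hypothesis~\eqref{A} (and the already established tightness of $\log W$ from Lemma~\ref{lpbdd}), establish a uniform moment bound $\sup_n Q\big[(W^x_{\beta_n,Tn})^p\big]<\infty$ for some $p>2$. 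The exponent $\theta\in(0,1)$ then buys a genuine margin: from $|(W^x)^\theta-(W^{x'})^\theta|\le|W^x-W^{x'}|^\theta$ and the higher moments one obtains, for a suitable even $q$, an increment estimate
\[
Q\big[|\widetilde W_n(a)^\theta-\widetilde W_n(a')^\theta|^{q}\big]\le C|a-a'|^{1+\rho},\qquad \rho>0,
\]
which feeds a Garsia--Rodemich--Rumsey / Kolmogorov estimate and yields both tightness of $\widetilde W_n^{\,\theta}$ in $C([-1,1])$ and a uniform bound $\sup_n Q\big[(\sup_{a}\widetilde W_n(a)^\theta)^{1+\delta}\big]<\infty$.

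Finally, the map $f\mapsto\sup_{a\in[-1,1]}f(a)^\theta$ is continuous on the nonnegative cone of $C([-1,1])$, so the field convergence and the continuous mapping theorem give $\sup_a\widetilde W_n(a)^\theta\Rightarrow\sup_{x\in[-1,1]}(\Z^{x}_{\sqrt2}(T))^\theta$ in distribution, the continuity of $x\mapsto\Z^x_{\sqrt2}(T)$ identifying the continuum supremum over $[-1,1]$ with the limit of the discrete grid maxima. The uniform $L^{1+\delta}$ bound makes $\{\max_{x\in B_0^n(0)}(W^x_{\beta_n,Tn})^\theta\}_n$ uniformly integrable, upgrading the distributional convergence to convergence of expectations, which is exactly the claimed identity. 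I expect the uniform $L^p$-boundedness from \eqref{A} and the resulting control of the maximum — rather than the finite-dimensional convergence, which is a routine extension of Theorem~\ref{thmref} — to be the delicate point.
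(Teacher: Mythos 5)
Your overall skeleton coincides with the paper's: finite-dimensional convergence of the rescaled field (cited from Alberts--Khanin--Quastel), tightness of the linearly interpolated field in $C[-1,1]$, an $L^p$ bound on the maximum of the $\theta$-power, and then convergence of expectations. But the central technical step fails as you designed it. From a uniform moment bound $\sup_n Q\left[(W^x_{\beta_n,Tn})^p\right]<\infty$ together with the diffusive $L^2$ increment bound $Q\left[|\widetilde W_n(a)-\widetilde W_n(a')|^2\right]\le C|a-a'|$ one cannot derive $Q\left[|\widetilde W_n(a)^\theta-\widetilde W_n(a')^\theta|^{q}\right]\le C|a-a'|^{1+\rho}$ with $\rho>0$: interpolating the relevant norm of the increment between the $L^2$ bound, which vanishes like $|a-a'|^{1/2}$, and a higher norm which is merely bounded always produces a Kolmogorov exponent strictly below $1$ (if $1/q=\lambda\theta/2+(1-\lambda)\theta/p$, the resulting exponent is $q\lambda\theta/2=1-q(1-\lambda)\theta/p<1$), which is useless both for tightness and for the Garsia--Rodemich--Rumsey step, where the paper needs $\eta_{p,\theta}>pq-1>1$ in (\ref{contcri}). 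The margin you attribute to the power $\theta$ is illusory in this scheme. What is actually needed --- and what the paper proves --- is an $L^p$ increment bound that is itself small in $|a-a'|$ at \emph{every} moment order: expand $W^x-W^y$ in discrete polynomial chaos, bound each chaos component in $L^2$ via the local CLT estimate (\ref{LLT}), and invoke hypercontractivity for discrete chaoses (Mossel--O'Donnell--Oleszkiewicz) to get $\|\Theta^{(k)}(tn^{1/2})-\Theta^{(k)}(sn^{1/2})\|_p\le \kappa_p^{k}\,\|\cdot\|_2$ order by order, whence $Q\left[|W^{tn^{1/2}}_{\beta_n,Tn}-W^{sn^{1/2}}_{\beta_n,Tn}|^p\right]\le C_{p,T}|t-s|^{p/2}$ for all $p$, so the exponent $p\theta/2$ can be pushed above $pq-1$ (the paper takes $p=5/\theta$, $q=2\theta/3$).

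A secondary problem is your reliance on the concentration hypothesis (\ref{A}), both for the putative moment bound and via Lemma \ref{lpbdd}. The paper's proof of Lemma \ref{maxsup} deliberately avoids (\ref{A}) --- see the remark at the end of Section \ref{2}, which notes that the limsup lemmas hold for general environments --- and in any case (\ref{A}) through Lemma \ref{tal} controls only the lower tail of $\log W$ (the gradient-on-an-event device gives one-sided bounds; the upper tail comes only from Markov's inequality $Q(W>e^t)\le e^{-t}$, since $Q[W]=1$), so $\sup_n Q\left[(W^x_{\beta_n,Tn})^p\right]<\infty$ for $p>2$ does not follow from it. Such uniform $L^p$ bounds are in fact true in the intermediate disorder regime, but again by chaos expansion plus hypercontractivity --- precisely the tool your argument is missing.
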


\begin{lem}\label{partition1}
There exists a set $I^{(\theta)}(T)\subset \mathbb{Z}$ such that $\sharp I^{(\theta)}(T)\asymp T^2$ and for some constant $C_1>0$ and $C_2>0$, \begin{align*}
\sum_{z\in I^{(\theta)}(T)^c}Q\left[\max_{x\in B_0^n(0)}P_S^x\left[\prod_{i=1}^{Tn}\zeta_{i,S_i}(\beta_n,\eta):S_{Tn}\in B_z^n\right]^\theta\right]\leq C_1\exp(-C_2T^2)
\end{align*} 
for any $N\geq 1$.
\end{lem}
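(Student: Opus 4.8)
The plan is to take $I^{(\theta)}(T)$ to be the diffusively accessible blocks, e.g.\ $I^{(\theta)}(T)=\{z\in\mathbb{Z}:|z|\le \lfloor T^2\rfloor\}$, so that $\sharp I^{(\theta)}(T)=2\lfloor T^2\rfloor+1\asymp T^2$ (the superscript is only a bookkeeping device: the set can in fact be chosen independently of $\theta$, with $\theta$ entering only the constants $C_1,C_2$). The content of the lemma is then a tail bound for the far blocks, and this rests on two ingredients: a Gaussian bound for the probability that the walk reaches a far block, and a bound, uniform in $n$, transferring this smallness through the $\theta$-th power of the maximum over starting points.

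Abbreviate $a_x=a_x(z)=P_S^x\big[\prod_{i=1}^{Tn}\zeta_{i,S_i}(\beta_n,\eta):S_{Tn}\in B_z^n\big]$. Since $t\mapsto t^\theta$ is increasing and concave on $[0,\infty)$, we have $\max_x a_x^\theta=(\max_x a_x)^\theta$, and Jensen's inequality for $Q$ gives
\[
Q\Big[\max_{x\in B_0^n(0)}a_x^\theta\Big]\le\Big(Q\Big[\max_{x\in B_0^n(0)}a_x\Big]\Big)^\theta .
\]
Thus it suffices to prove, uniformly in $n$ (the quantity does not depend on $N$, which disposes of the ``for any $N\ge1$''),
\[
Q\Big[\max_{x\in B_0^n(0)}a_x(z)\Big]\le C\,P(|z|)\,\exp\!\Big(-\frac{c\,z^2}{T}\Big)
\]
for all large $|z|$, with $c>0$ and $P$ a fixed polynomial. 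Raising this to the power $\theta$ and summing over $z\in I^{(\theta)}(T)^c=\{|z|>\lfloor T^2\rfloor\}$, where $z^2/T\ge |z|T$, produces $\sum_{|z|>\lfloor T^2\rfloor}C\,P(|z|)^\theta e^{-\theta c|z|T}\le C_1 e^{-C_2 T^3}\le C_1 e^{-C_2T^2}$, which is the assertion.

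The mean bound is elementary. As $Q[a_x(z)]=P_S^x[S_{Tn}\in B_z^n]$, and since every $x\in B_0^n(0)$ satisfies $|x|\le\lfloor n^{1/2}\rfloor$ while $B_z^n$ is centred near $2z\lfloor n^{1/2}\rfloor$ with width of order $n^{1/2}$, a contributing path must realise a displacement of order $|z|\,n^{1/2}$; Hoeffding's inequality for the simple random walk (bounded increments, ``variance'' $Tn$) yields $P_S^x[S_{Tn}\in B_z^n]\le 2\exp(-c z^2/T)$ uniformly in $n$ and in $x\in B_0^n(0)$.

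The main obstacle is upgrading this pointwise-in-$x$ mean bound to the bound on $Q[\max_x a_x(z)]$ that is uniform in $n$: the maximum runs over $\Theta(n^{1/2})$ starting points, so the crude estimate $Q[\max_x a_x]\le\sum_x Q[a_x]$ loses a divergent factor $n^{1/2}$ and ruins uniformity. I would handle this exactly as in the proof of Lemma~\ref{maxsup}: the restricted partition functions $a_x(z)$ are strongly correlated and equicontinuous in the rescaled start $x/n^{1/2}\in[-1,1]$, with increment moment bounds $Q[|a_x-a_{x'}|^p]$ uniform in $n$ which still carry the Gaussian factor $\exp(-cz^2/T)$ (each contributing path must cross the displacement $\approx|z|n^{1/2}$). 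Feeding these into a Garsia--Rodemich--Rumsey / chaining bound, together with the concentration inequality~(\ref{A}) (Lemma~\ref{tal}), controls the maximum by the one-point mean up to the polynomial factor $P(|z|)$, uniformly in $n$. Hence the only point beyond Lemma~\ref{maxsup} is to track the endpoint-restriction factor through these estimates.
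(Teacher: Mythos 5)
Your overall architecture coincides with the paper's: take $I^{(\theta)}(T)=\{z:|z|\le T^2\}$ (indeed independent of $\theta$), bound the mean of a single block by the Gaussian estimate $\sum_{w\in B_z^n}p_{Tn}(x,w)\le \exp(-z^2/T)$, and upgrade to the maximum over starting points via the Garsia--Rodemich--Rumsey machinery of Lemma \ref{maxsup}. The gap is precisely the one step you defer: the claim that the increment moment bounds $Q[|a_x-a_{x'}|^p]$ ``still carry the Gaussian factor $\exp(-cz^2/T)$, uniformly in $n$.'' Your heuristic (every contributing path must realise a displacement of order $|z|n^{1/2}$) does not survive the actual second-moment computation. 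Writing $a_x$ in its chaos expansion as in the paper, the $k$-th term of the increment second moment is estimated via
\begin{align*}
\sum_{\mathbf{x}}(p_{i_1}(x_1-x)-p_{i_1}(x_1-y))^2\prod_{j=2}^{k}p_{i_j-i_{j-1}}(x_j-x_{j-1})^2\Bigl(\sum_{w\in B_z^n}p_{Tn-i_k}(w-x_k)\Bigr)^2,
\end{align*}
and the only way known to sum this uniformly in $n$ with the right $|x-y|/\sqrt{n}$ regularity is to collapse the squared kernels through $\sum_u p_s(u)^2=p_{2s}(0)$; at that point the spatial variables are decoupled and the endpoint factor can only be bounded by $1$, so the $z$-decay is lost. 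A full-strength Gaussian factor in the high-moment increment bounds is thus not obtainable by the estimates you invoke from Lemma \ref{maxsup}, and it is exactly here that the paper inserts a different idea.

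The paper's fix is a H\"older interpolation between regularity and decay: with $p=5/\theta$, so $p\theta=5=\tfrac92+\tfrac12$, it writes
\begin{align*}
Q\left[\left|W^{t}-W^{s}\right|^{p\theta}\right]\le Q\left[\left|W^{t}-W^{s}\right|^{9}\right]^{1/2}Q\left[W^{t}+W^{s}\right]^{1/2},
\end{align*}
where $W^{t}$ abbreviates the block-restricted partition function started at $tn^{1/2}$. The H\"older regularity $|t-s|^{9/2}$ comes from the first factor (no $z$-dependence, via hypercontractivity exactly as in Lemma \ref{maxsup}), while the Gaussian decay comes from the elementary $L^1$ factor $Q[W(\eta,B_z^n)]=\sum_{w\in B_z^n}p_{Tn}(w-\cdot)\le \exp(-z^2/T)$ --- but only to the power $\tfrac12$. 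Your surrounding reduction is robust to this weakening: any fixed positive power of $\exp(-z^2/T)$, summed over $|z|>T^2$ where $z^2/T\ge |z|T$, still yields $C_1\exp(-C_2T^2)$ (indeed $\exp(-cT^3)$), and your Jensen step $Q[\max_x a_x^\theta]\le (Q[\max_x a_x])^\theta$ is fine. So your outline is completable, but as written it asserts rather than proves the pivotal estimate, and asserts it in a stronger form than the invoked methods deliver; the interpolation trick is the genuine content of this proof beyond Lemma \ref{maxsup}.
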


Then, we have that \begin{align*}
\varlimsup_{n\to \infty}\frac{1}{\beta_n}F(\beta_n)\leq \frac{1}{\theta T}\log  \left(C_1\exp(-C_2T^2)+C_3T^2 P_\Z\left[\sup_{x\in [-1,1]}\left(\Z_{\sqrt{2}}^x(T)\right)^\theta\right]\right).
\end{align*}

The following result gives us an upper bound of the limit superior:
\begin{lem}\label{ttheta}
We have that 
\begin{align*}
\varlimsup_{\theta\to 0}\varlimsup_{T\to \infty}\frac{1}{T\theta}\log P_\Z\left[\sup_{x\in [-1,1]}\left(\Z_{\sqrt{2}}^x(T)\right)^\theta\right]\leq F_\Z(\sqrt{2}).
\end{align*}
\end{lem}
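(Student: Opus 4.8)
The plan is to strip the spatial supremum down to the total-mass free energy and then show that the fluctuations of $\log\Z_{\sqrt2}(T)$ contribute nothing to the exponential rate once $\theta\to0$. Throughout write $M_T=\log\Z_{\sqrt2}^0(T)$ and $m_T=P_\Z[M_T]$; recall from Lemma \ref{freeenecdp} that $m_T/T\to F_\Z(\sqrt2)$ and in fact $m_T/T\le F_\Z(\sqrt2)$ for every $T$, since $F_\Z(\sqrt2)=\sup_{T}\tfrac1T m_T$. First I would reduce $P_\Z[\sup_{x\in[-1,1]}(\Z_{\sqrt2}^x(T))^\theta]$ to the single-point quantity $P_\Z[(\Z_{\sqrt2}^0(T))^\theta]$ up to a factor that is subexponential in $T$. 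The key input is translation invariance of the white noise, which makes $\{\Z_{\sqrt2}^x(T,\cdot)\}$ stationary in $x$ and in particular $\Z_{\sqrt2}^x(T)\stackrel{d}{=}\Z_{\sqrt2}^0(T)$. Covering $[-1,1]$ by finitely many translated subintervals $I_1,\dots,I_K$ and using $(\sup_j a_j)^\theta\le\sum_j a_j^\theta$ with stationarity gives $P_\Z[\sup_{x\in[-1,1]}(\Z_{\sqrt2}^x(T))^\theta]\le K\,P_\Z[\sup_{x\in I_1}(\Z_{\sqrt2}^x(T))^\theta]$; a spatial-regularity (Kolmogorov/chaining) estimate for $x\mapsto\Z_{\sqrt2}^x(T)$ then lets me compare the local supremum to the value at the centre, $P_\Z[\sup_{x\in I_1}(\Z_{\sqrt2}^x(T))^\theta]\le C\,P_\Z[(\Z_{\sqrt2}^0(T))^\theta]$, with $C,K$ independent of $T$. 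Since $\tfrac{\log(CK)}{T\theta}\to0$ as $T\to\infty$, it suffices to bound $\varlimsup_{\theta\to0}\varlimsup_{T\to\infty}\tfrac1{T\theta}\log P_\Z[(\Z_{\sqrt2}^0(T))^\theta]$.

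Next I would centre, writing for $\theta>0$
\begin{align*}
\frac1{T\theta}\log P_\Z\!\left[(\Z_{\sqrt2}^0(T))^\theta\right]=\frac{m_T}{T}+\frac1{T\theta}\log P_\Z\!\left[e^{\theta(M_T-m_T)}\right].
\end{align*}
The first term converges to $F_\Z(\sqrt2)$ by Lemma \ref{freeenecdp}. Note that the naive estimate $P_\Z[(\Z_{\sqrt2}^0(T))^\theta]\le(P_\Z[\Z_{\sqrt2}^0(T)])^\theta=1$ (Jensen, using $P_\Z[\Z_{\sqrt2}^0(T)]=1$) only yields the rate $0$, which is strictly larger than $F_\Z(\sqrt2)=-\tfrac16$; hence the free energy must be recovered from the $m_T/T$ term, and the whole point is to show the centred term is negligible in the iterated limit.

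The main work is to prove that the centred exponential moment obeys a sub-Gaussian bound with variance proxy linear in $T$: there is $C>0$ with
\begin{align*}
P_\Z\!\left[e^{\theta(M_T-m_T)}\right]\le\exp\!\left(C\theta^2 T\right)
\end{align*}
for all small $\theta$ and all $T$. Granting this, $\tfrac1{T\theta}\log P_\Z[e^{\theta(M_T-m_T)}]\le C\theta\to0$, so $\varlimsup_{T\to\infty}\tfrac1{T\theta}\log P_\Z[(\Z_{\sqrt2}^0(T))^\theta]\le F_\Z(\sqrt2)+C\theta$, and letting $\theta\to0$ together with Step~1 proves the lemma. To obtain the sub-Gaussian bound I would regard $M_T=\log\Z_{\sqrt2}^0(T)$ as a functional of the space-time white noise $\mathcal W$ on $[0,T]\times\mathbb R$ and invoke Gaussian concentration (Borell--TIS, or log-Sobolev--Herbst). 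The Malliavin derivative is $D_{s,y}M_T=\sqrt2\,\rho^T_s(y)$, where $\rho^T_s$ is the polymer occupation density at time $s$ (a probability density in $y$), so $\|D M_T\|_{L^2([0,T]\times\mathbb R)}^2=2\int_0^T\!\int_\mathbb R(\rho^T_s(y))^2\,dy\,ds$, and the required estimate reduces to controlling $\int_\mathbb R(\rho^T_s(y))^2\,dy$ on average over $[0,T]$.

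I expect this last step to be the main obstacle. The delicate point is that Gaussian concentration in its cleanest (Herbst) form wants an $L^\infty$ control of the squared Malliavin gradient, whereas $\int_\mathbb R(\rho^T_s(y))^2\,dy$ is only controlled in expectation; obtaining a genuine exponential bound (rather than a mere Poincar\'e-type variance bound) with the correct $O(T)$ scaling, uniformly enough in $\theta$ to survive the iterated limit, is the heart of the argument and may require truncating the occupation density together with the linear growth of $\int_0^T P_\Z[\int_\mathbb R(\rho^T_s)^2\,dy]\,ds$. A secondary technical point is the spatial-regularity comparison in Step~1, for which I would use moment estimates on the increments $\Z_{\sqrt2}^x(T)-\Z_{\sqrt2}^{x'}(T)$ to ensure that the supremum over a short interval does not inflate the $\theta$-moment beyond a $T$-independent constant.
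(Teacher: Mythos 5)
Your skeleton (centre $\log\Z_{\sqrt2}(T)$, show the free energy comes from $m_T/T$, and prove a sub-Gaussian bound $P_\Z[e^{\theta(M_T-m_T)}]\le e^{C\theta^2T}$) is exactly the paper's weak statement, Lemma \ref{wttheta} and its inequality (\ref{thetaz}). But you leave the sub-Gaussian bound unproven, and the route you sketch for it does not close. The Malliavin gradient is indeed $\|DM_T\|_{L^2}^2=2\int_0^T\int_{\mathbb R}(\rho_s^T(y))^2\,dy\,ds$, but this quantity is \emph{not} almost surely bounded — the polymer overlap $\int(\rho_s^T(y))^2dy$ has heavy tails in the intermittent regime and is only controlled in expectation — so Borell--TIS/Herbst is not applicable, exactly as you suspect; a Poincar\'e-type bound gives only variance control, which is too weak to survive the $\frac1{T\theta}\log$ of an exponential moment. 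The paper's missing idea here is to avoid Gaussian concentration altogether: it decomposes $M_T-m_T=\sum_{i=1}^{T}V_i^T$ into martingale differences over \emph{unit time intervals}, introduces $\hat\Z_{\sqrt2}(i,T)=\int\Z_{\sqrt2}(i-1,x)\rho_1(x,y)\Z_{\sqrt2}^{(i,y)}(T)\,dx\,dy$ so that $V_i^T$ only involves the ratio $\Z_{\sqrt2}(T)/\hat\Z_{\sqrt2}(i,T)$, and bounds the \emph{two-sided conditional exponential moments} $P_\Z[e^{\pm V_i^T}\mid\mathcal F_{i-1}]\le C$ via Jensen against the polymer measure $\mu_T^{(i)}$, stationarity, and crucially Moreno's negative-moment bound $P_\Z[(\Z_{\sqrt2}(0,x;1,y)/\rho_1(x,y))^{-1}]\le C$ (Corollary \ref{cor}); then the Liu--Watbled martingale inequality \cite{LiuWat} delivers (\ref{thetaz}). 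Without this (or an equivalent substitute for the unbounded-gradient problem), your Step 3 is an assumption, not a proof.

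Your Step 1 also has a genuine quantitative gap: the claimed comparison $P_\Z[\sup_{x\in I_1}(\Z_{\sqrt2}^x(T))^\theta]\le C\,P_\Z[(\Z_{\sqrt2}^0(T))^\theta]$ with $C$ \emph{independent of} $T$ is not obtainable by chaining directly on $x\mapsto\Z_{\sqrt2}^x(T)$. The increment moments behave like $P_\Z[|\Z_{\sqrt2}^x(T)-\Z_{\sqrt2}^{x'}(T)|^p]\le C_{p,T}|x-x'|^{p/2}$ where $C_{p,T}$ grows exponentially in $T$ (the chaos expansion runs over orders $k\lesssim T$ and hypercontractivity costs $(p-1)^{k/2}$ per order), with rate $\frac1T\log C_{p,T}$ increasing in $p$; since Garsia--Rodemich--Rumsey forces $p\sim 1/\theta$ to handle the $\theta$-power, the chaining correction contributes a term of size roughly $\frac{1}{T\theta p}\log C_{p,T}$ to the rate which does not vanish — it in fact diverges — as $\theta\to0$. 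This is precisely why the paper does \emph{not} chain at time $T$: in Lemmas \ref{zc} and \ref{zt} it factorizes at time $1$, writing $\Z_{\sqrt2}^x(T)=\int_{\mathbb R}\Z_{\sqrt2}^x(1,w)\Z_{\sqrt2}^{(1,w)}(T)\,dw$ and splitting over the window $A(T)$ of length $\asymp T^3$, so that all GRR/chaining estimates are performed on the time-$1$ field $(x,w)\mapsto\Z_{\sqrt2}^x(1,w)$, whose moment bounds are $T$-independent with Gaussian decay in $w$, while the entire $T$-dependence sits in the forward masses $\Z_{\sqrt2}^{(1,u)}(T)$, reduced by H\"older and negative moments to $(\Z_{\sqrt2}^1(T+L))^{\theta/(1-\theta)}$ and then handled by the weak Lemma \ref{wttheta}; the $A(T)^c$ tail is shown to be superexponentially negligible. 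You would need to import both of these devices — the time-$1$ factorization for the supremum and the martingale/negative-moment mechanism for concentration — to make your proposal work.
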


\begin{rem}
(\ref{A}) is not assumed in lemmas in this subsection.  Thus, we find that the limit superior in Theorem \ref{main} is true for general environment.
\end{rem}

In the rest of the paper, we will prove the above lemmas.

\section{Proof of Lemma \ref{lpbdd}, Lemma \ref{maxsup}, and Lemma \ref{partition1}}\label{3}
\subsection{Proof of Lemma \ref{lpbdd}}

To prove  Lemma \ref{lpbdd}, we use the following concentration inequality.  

\begin{lem}\label{tal}(\cite[Lemma 3.3, Proposition 3.4]{CarTonTor})
Assume (\ref{A}). Then, for any $m\in\mathbb{N}$ and for any differentiable convex function $f:\mathbb{R}^m\to \mathbb{R}$, we have that \begin{align*}
Q\left(f(\eta)<a-t\right)Q\left(	f(\eta)>a, \left|\nabla f(\eta)\right|\leq c		\right)\leq C_1'\exp\left(-\frac{\left(\frac{t}{c}\right)^\gamma}{C_2'}\right),\ \ a\in\mathbb{R}, t,c\in(0,\infty),
\end{align*}
 where $\eta=\{\eta_1,\cdots,\eta_n\}$ are i.i.d.~random variables with the marginal law $Q(\eta_e\in dx)$ and $|\nabla f(\eta)|=\dis \sqrt{\sum_{i=1}^m \left|\frac{\partial }{\partial \eta_i}f(\eta)\right|^2}$.
\end{lem}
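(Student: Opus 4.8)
The plan is to deduce this product inequality from the convex $1$-Lipschitz concentration estimate (\ref{A}), applied not to $f$ itself but to a Lipschitz regularization of $f$. The essential difficulty is precisely that $f$ is only assumed convex and differentiable, \emph{not} globally Lipschitz, so (\ref{A}) cannot be invoked for $f$ directly; the slope control $|\nabla f|\le c$ is available only on the event governing the second factor. First I would rescale: replacing $f$ by $f/c$ turns the events into $\{f/c<(a-t)/c\}$ and $\{f/c>a/c,\ |\nabla(f/c)|\le 1\}$ while leaving $t/c$ unchanged, so it suffices to treat the case $c=1$.

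The key construction is the inf-convolution
\[
h(x)=\inf_{y\in\mathbb{R}^m}\bigl(f(y)+\|x-y\|\bigr),
\]
the largest $1$-Lipschitz minorant of $f$. I would record three properties. First, $h$ is $1$-Lipschitz, which follows immediately from the triangle inequality $\|x-y\|\le\|x'-y\|+\|x-x'\|$. Second, $h$ is convex, being the inf-convolution of the two convex functions $f$ and $\|\cdot\|$. Third, and crucially, $h\le f$ everywhere (take $y=x$), with equality at every point $x$ where $|\nabla f(x)|\le 1$: at such $x$ the supporting affine map $\ell(z)=f(x)+\nabla f(x)\cdot(z-x)$ satisfies $\ell\le f$ by convexity and is $1$-Lipschitz, so $f(y)+\|x-y\|\ge \ell(y)+\|x-y\|\ge \ell(x)=f(x)$ for all $y$, giving $h(x)\ge f(x)$. (If the event $\{f>a,\ |\nabla f|\le 1\}$ is empty the second factor vanishes and there is nothing to prove; otherwise $h=f$ at one point, and being $1$-Lipschitz it is finite everywhere.)

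From $h\le f$ and the equality on $\{|\nabla f|\le 1\}$ I get the inclusions $\{f<a-t\}\subseteq\{h<a-t\}$ and $\{f>a,\ |\nabla f|\le 1\}\subseteq\{h>a\}$, so the left-hand side is bounded by $Q(h<a-t)\,Q(h>a)$. Since $h$ is convex and $1$-Lipschitz, (\ref{A}) gives $Q(|h-\mu|\ge s)\le C_1\exp(-C_2 s^\gamma)$ with $\mu=Q[h]$. A case split on whether $\mu\ge a-t/2$ or $\mu<a-t/2$ forces one of the two factors to be a deviation of $h$ from its mean by at least $t/2$, hence at most $C_1\exp(-C_2(t/2)^\gamma)$, while the other factor is at most $1$. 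Undoing the rescaling yields the claim with $C_1'=C_1$ and $C_2'=2^\gamma/C_2$.

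I expect the only genuine obstacle to be the verification of the third property of $h$, namely that the inf-convolution is convex and coincides with $f$ \emph{exactly} on the gradient-controlled set; once this is in hand, the inclusions and the mean-centering case split are routine. The point where differentiability of $f$ is used is in the existence of the supporting plane with slope exactly $\nabla f(x)$ in that step, so I would make sure to flag it there.
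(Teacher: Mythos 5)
Your proof is correct. One thing to note at the outset: the paper itself gives no proof of this lemma --- it is imported verbatim from \cite[Lemma 3.3, Proposition 3.4]{CarTonTor} --- so the only meaningful comparison is with the argument in that reference. The proof there runs through the convex sublevel set $A=\{f\le a-t\}$: for $y$ in the second event and $x\in A$, convexity gives $t<f(y)-f(x)\le \nabla f(y)\cdot(y-x)\le c\|y-x\|$, so the function $g=d(\cdot,A)$, which is convex (distance to a convex set) and $1$-Lipschitz, vanishes on $\{f<a-t\}$ and is at least $t/c$ on $\{f>a,\ |\nabla f|\le c\}$; applying (\ref{A}) to $g$ and splitting on the position of $Q[g]$ yields the product bound. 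Your inf-convolution $h=f\,\square\,\|\cdot\|$ (the Pasch--Hausdorff envelope) plays exactly the role of $g$: your verification of its three properties --- $1$-Lipschitz by the triangle inequality, convex as an infimal convolution of convex functions, and $h\le f$ with equality wherever $|\nabla f(x)|\le 1$ via the supporting affine function of slope $\nabla f(x)$ --- is sound, as are the reduction to $c=1$ by rescaling, the treatment of the degenerate case ($h\equiv-\infty$ can occur only if no point satisfies $|\nabla f|\le 1$, in which case the second factor vanishes), and the concluding case split on whether $Q[h]\ge a-t/2$, which is the same endgame as in \cite{CarTonTor} and gives the same constants $C_1'=C_1$, $C_2'=2^\gamma/C_2$. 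The two routes are of comparable difficulty; yours trades the separation-of-sets geometry for an envelope identity, and has the mild aesthetic advantage that the regularized function agrees with $f$ on the gradient-controlled event rather than merely dominating a separation distance. The only point worth a flag: applying (\ref{A}) to $h$ centered at its mean implicitly uses $Q\left[|h(\eta)|\right]<\infty$; this is harmless here since $h$ is $1$-Lipschitz and the paper's standing assumption $\lambda(\beta)\in\mathbb{R}$ for all $\beta$ gives the environment all moments, but it deserves a sentence (or one can replace the mean by a median in the case split).
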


We take $\mathbb{R}^m$  as $\mathbb{R}^{E_n}$ in the proof of Lemma \ref{lpbdd} with $E_n=\{1,\cdots,Tn\}\times \{-Tn,\cdots,Tn\}$ which contains all lattices simple random walk can each up to time $Tn$.

\begin{proof}[Proof of Lemma \ref{lpbdd}] When we look at $W_{\beta_n,Tn}(\eta)$ as the function of $\{\eta(i,x):(i,x)\in E_n\}$, $\log W_{\beta_n,Tn}(\eta)$ is differentiable and convex. Indeed, we have that \begin{align*}
\frac{\partial }{\partial \eta(i,x)}\log W_{\beta_n,Tn}(\eta)=\frac{1}{W_{\beta_n,Tn}(\eta)}P_S\left[\beta_n \prod_{k=1}^{Tn}\zeta_{k,S_k}(\beta_n,\eta): S_i=x \right]
\end{align*}
and for $s\in [0,1]$, for $\eta=\{\eta(i,x):(i,x)\in E_n\}$ and $\eta'=\{\eta'(i,x):(i,x)\in E_n\}$\begin{align*}
W_{\beta_n,Tn}(s\eta+(1-s)\eta')&=P_S\left[\prod_{k=1}^{Tn}\zeta_{k,S_k}(\beta_n,s\eta+(1-s)\eta')\right]\\
&\leq W_{\beta_n,Tn}(\eta)^sW_{\beta_n,Tn}(\eta')^{1-s}.
\end{align*}

Thus, we can apply Lemma \ref{tal} to $\log W_{\beta_n,Tn}(\eta)$. Since \begin{align*}
\left|\nabla \log W_{\beta_n,Tn}(\eta)\right|^2&=\beta_n^2\sum_{(i,x)\in E_n}\left(\frac{1}{W_{\beta_n,Tn}}P_S\left[\prod_{k=1}^{Tn}\zeta_{k,S_k}(\beta_n,\eta):S_i=x\right]\right)^2\\
&=\beta_n^2\sum_{(i,x)\in E_n}\mu_{\beta_n,Tn}^{\eta}(S_i=x)^2\\
&=\beta_n^2\left(\mu_{\beta_n,Tn}^{\eta}\right)^{\otimes 2}\left[\sharp\{1\leq i\leq Tn:S_i=S_i'\}\right],
\end{align*}
where $\mu_{\beta,n}^\eta$ is the probability measure on the simple random walk paths defined by \begin{align*}
\mu_{\beta,n}^\eta(s)=\frac{1}{W_{\beta,n}(\eta)}\exp\left(\beta \sum_{i=1}^N\eta(i,s_i)-n\lambda(\beta)\right)P_S(S_{[0,n]}=s),\ \ \ s=(s_0,s_1,\cdots,s_n)\in\mathbb{Z}^{n+1},
\end{align*}
with $\left(\mu_{\beta_n,Tn}^{\eta}\right)^{\otimes 2}$ is the product probability measure of $\mu_{\beta_n,Tn}^{\eta}$, and $S$ and $S'$ are paths of independent directed path with the law $\mu_{\beta_n,Tn}^{\eta}$.

We write \begin{align*}
L_n(s,s')=\sharp\{1\leq i\leq n:s_i=s_i'\}
\end{align*}
for $s=(s_1,\cdots,s_n)$ and $s'=(s_1',\cdots,s_n')\in \mathbb{Z}^n$.


We define the event $A_n$ on the environment by \begin{align*}
&A_n\\
&=\left\{\eta: W_{\beta_n,Tn}(\eta)\geq \frac{1}{2}Q\left[W_{\beta_n,Tn}(\eta)\right],\  \beta_n^2\left(\mu_{\beta_n,Tn}^{\eta}\right)^{\otimes 2}\left[L_{Tn}(S,S')\right]\leq C_4\right\}
\end{align*}
for some $C_4>0$ which we will take large enough. We claim that for  $C_4>0$ large enough, there exists a constant $\delta>0$ such that \begin{align}
Q(A_n)>\delta\label{lbdd}
\end{align}
for all $n\geq 1$. If (\ref{lbdd}) holds, then Lemma \ref{lpbdd} follows.

Indeed, applying Lemma \ref{tal}, we have that  \begin{align*}
Q(\log W_{\beta_n,Tn}(\eta)\leq -\log 2 -u)\leq Q(A_n)^{-1}C_1'\exp\left(-\frac{\left(\frac{u}{C_1'}\right)^\gamma}{C_2'}\right).
\end{align*}
 Thus, we find the $L^2$-boundedness of $\log W_{\beta_n,Tn}(\eta)$ and hence uniform integrability.

We will complete the proof of Lemma \ref{lpbdd} by showing that (\ref{lbdd}). We observe that \begin{align*}
&Q(\eta\in A_n)\\
&=Q\left(\left\{\eta: W_{\beta_n,Tn}(\eta)\geq \frac{1}{2}Q\left[W_{\beta_n,Tn}(\eta)\right]\right\}\right)\\
&-Q\left(\left\{\eta: W_{\beta_n,Tn}(\eta)\geq \frac{1}{2}Q\left[W_{\beta_n,Tn}(\eta)\right],\ \ \left(\mu_{\beta_n,Tn}^{\eta}\right)^{\otimes 2}\left[L_{\lfloor Tn}(S,S')\right]> C_4\right\} \right)\\
&\geq Q\left(\left\{\eta: W_{\beta_n,Tn}(\eta)\geq \frac{1}{2}\right\}\right)\\
 &-Q\left(\left\{\eta: P_{S,S'}\left[\beta_n^2L_{Tn}(S,S')\prod_{i=1}^{Tn}\zeta_{i,S_i}(\beta_n,\eta)\zeta_{i,S_i}(\beta_n,\eta)\right]> \frac{C_4}{4}\right\} \right)\\
 &\geq Q\left(\left\{\eta: W_{\beta_n,Tn}(\eta)\geq \frac{1}{2}Q\left[W_{\beta_n,Tn}(\eta)\right]\right\}\right)\\
 &-\frac{4}{C_4}Q\left[P_{S,S'}\left[\beta_{n}^2L_{Tn}(S,S')\prod_{i=1}^{Tn}\zeta_{i,S_i}(\beta_n,\eta)\zeta_{i,S_i'}(\beta_n,\eta)\right]\right],
\end{align*}
where $(S',P_{S'})$ is the simple random walk on $\mathbb{Z}$ starting from the origin and $P_{S,S'}$ is the product measure of $P_S$ and $P_{S'}$.

Paley-Zygmund's inequality yields that \begin{align*}
Q\left(\left\{\eta: W_{\beta_n,Tn}(\eta)\geq \frac{1}{2}Q\left[W_{\beta_n,Tn}(\eta)\right]\right\}\right)&\geq \frac{1}{4}\frac{\left(Q\left[W_{\beta_n,Tn}(\eta)\right]\right)^2}{Q\left[W_{\beta_n,Tn}(\eta)^2\right]}\\
&=\frac{1}{4}\frac{1}{Q\left[W_{\beta_n,Tn}(\eta)^2\right]}.
\end{align*}
Also, we have that \begin{align*}
&Q\left[P_{S,S'}\left[\beta_{n}^2L_{Tn}(S,S')\prod_{i=1}^{Tn}\zeta_{i,S_i}(\beta_n,\eta)\zeta_{i,S_i'}(\beta_n,\eta)\right]\right]\\
&=P_{S,S'}\left[\beta_n^2L_{Tn}(S,S')\exp\left(\left(\lambda(2\beta_n)-2\lambda(\beta_n)\right)L_{Tn}(S,S')\right)\right]\\
&\leq P_{S,S'}\left[\exp\left(2\left(\lambda(2\beta_n)-2\lambda(\beta_n)\right)L_{Tn}(S,S')\right)\right].
\intertext{and}
&Q\left[W_{\beta_n,Tn}(\eta)^2\right]\\
&=Q\left[P_{S,S'}\left[\prod_{i=1}^{Tn}\zeta_{i,S_i}(\beta_n,\eta)\zeta_{i,S_i'}(\beta_n,\eta)\right]\right]\\
&=P_{S,S'}\left[\exp\left(2\left(\lambda(2\beta_n)-2\lambda(\beta_n)\right)L_{Tn}(S,S')\right)\right].
\end{align*}

Since \begin{align*}
\frac{\lambda(2\beta_n)-2\lambda(\beta_n)}{\beta_n^2}\to \lambda''(0)=1
\intertext{and}
\frac{\lambda(2r\beta_n)-2\lambda(r\beta_n)}{\lambda(2\beta_n)-2\lambda(\beta_n)}\to r^2
\end{align*}
as $n\to \infty$ for $r>0$, there exists a constant $r>0$ such that \begin{align*}
&P_{S,S'}\left[\exp\left(2\left(\lambda(2\beta_n)-2\lambda(\beta_n)\right)L_{Tn}(S,S')\right)\right]\\
&\leq P_{S,S'}\left[\exp\left(\left(\lambda(2r\beta_n)-2\lambda(r\beta_n)\right)L_{Tn}(S,S')\right)\right]\\
&\leq Q\left[W_{r\beta_n,Tn}(\eta)^2\right]
\end{align*}
for any $n$ large enough. The $L^2$-boundedness of $W_{\beta_n,Tn }(\eta)$ (see Theorem \ref{thmref}) implies that there exist $C_5>0$ and $C_6>0$ such that \begin{align*}
&Q\left[W_{\beta_n,Tn}(\eta)^2\right]\leq C_5\\
\intertext{and}
&Q\left[P_{S,S'}\left[\beta_{n}^2L_{Tn}(S,S')\prod_{i=1}^{Tn}\zeta_{i,S_i}(\beta_n,\eta)\zeta_{i,S_i'}(\beta_n,\eta)\right]\right]\leq C_6.
\end{align*}

We conclude that \begin{align*}
Q(\eta\in A_n)\geq \frac{1}{4C_{5}}-\frac{4C_{6}}{C_4}
\end{align*}
and we obtain (\ref{lbdd}) by taking $C_4>0$ large enough.
\end{proof}

\subsection{Proof of Lemma \ref{maxsup}}

Since the finite dimensional distributions $\dis \left\{W_{\beta_n,Tn}^{x_in^{1/2}}(\eta):1\leq i\leq m\right\}$ for $x_1,\cdots,x_m\in B_0^n(0)$ converge to 
$\dis \left\{\Z_{\sqrt{2}}^{x_i}(T):1\leq i\leq m\right\}$ (see \cite[Section 6.2]{AlbKhaQua}), the  tightness of $\{W_{\beta_n,Tn}^{xn^{1/2}}(\eta):x\in [-1,1]\}$ in $C[-1,1]$ and $L^p$-boundedness of $ \max_{x\in[-1,1]}W_{\beta_n,Tn}^{xn^{1/2}}(\eta)^\theta$ for $p>1$ imply Lemma \ref{maxsup}.


We will use Garsia-Rodemich-Rumsey's lemma \cite[Lemma A.3.1]{Nua} a lot of times in the proof for limit superior.
\begin{lem}\label{Gar}
Let $\phi:[0,\infty)\to [0,\infty)$ and $\Psi:[0,\infty)\to [0,\infty)$ be continuous and stricctly increasing functions satisfying\begin{align*}
\phi(0)=\Psi(0)=0,\ \lim_{t\to \infty}\Psi(t)=\infty.
\end{align*}
Let $f:\mathbb{R}^d\to \mathbb{R}$ be a continuous function. Provided \begin{align*}
\Gamma=\int_{B_r(x)}\int_{B_r(x)}\Psi\left(\frac{|f(t)-f(s)|}{\phi(|t-s|)}\right)dsdt<\infty,
\end{align*}
where $B_r(x)$ is an open ball in $\mathbb{R}^d$ centered at $x$ with radius $r$, then for all $s,t\in B_r(x)$,\begin{align*}
|f(t)-f(s)|\leq 8\int_0^{2|t-s|}\Psi^{-1}\left(\frac{4^{d+1}\Gamma}{\lambda_d u^{2d}}\right)\phi(du),
\end{align*}
where $\lambda_d$ is a universal constant depending only on $d$.
\end{lem}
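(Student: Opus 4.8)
The plan is to prove this by the classical dyadic chaining (``averaging plus pigeonhole'') argument of Garsia, Rodemich and Rumsey. Since the bound must hold for \emph{every} pair $s,t\in B_r(x)$, I would fix such a pair once and for all and construct a chain of intermediate points joining them, along which the oscillation of $f$ is controlled. The starting observation is that the function
\[
I(w)=\int_{B_r(x)}\Psi\left(\frac{|f(w)-f(y)|}{\phi(|w-y|)}\right)dy
\]
has integral $\Gamma$ over $B_r(x)$, so its average is $\Gamma/(\lambda_d r^d)$ and, by Markov's inequality, the set where $I$ is much larger than its average over any sub-ball has small volume. This is the mechanism that produces ``good'' intermediate points at which $f$ does not oscillate too much.

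First I would fix a geometrically decreasing sequence of radii $d_n$ (with $d_0\asymp|t-s|$ and $d_{n+1}\asymp d_n/2$) and build a sequence of points $t_n$ with $t_0=t$ and $t_n\to s$, where $t_{n+1}$ is selected inside a ball of radius $d_{n+1}$ around $t_n$. The crucial ingredient is a \emph{double} selection: I need $t_{n+1}$ to satisfy simultaneously (i) the averaging bound $I(t_{n+1})\lesssim \Gamma/(\lambda_d d_{n+1}^{d})$ for $I$ itself, and (ii) the averaging bound $\Psi\big(|f(t_n)-f(t_{n+1})|/\phi(|t_n-t_{n+1}|)\big)\lesssim I(t_n)/(\lambda_d d_{n+1}^{d})$ for the inner integrand defining $I(t_n)$. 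Each of the two ``forbidden'' sets has volume strictly less than half the volume of the selection ball, so their union cannot cover it and a valid $t_{n+1}$ exists.

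Next I would turn the per-step estimate into a telescoping bound. Inverting (ii) gives $|f(t_n)-f(t_{n+1})|\le \phi(|t_n-t_{n+1}|)\,\Psi^{-1}\!\big(2I(t_n)/(\lambda_d d_{n+1}^{d})\big)$, and feeding in the bound (i) for $I(t_n)$ from the previous step produces a right-hand side governed by $\Psi^{-1}\big(\mathrm{const}\cdot\Gamma/(\lambda_d d_n^{2d})\big)$; the exponent $2d$ appears precisely because two ball-volumes, of radii $d_n$ and $d_{n+1}\asymp d_n$, enter the product. Continuity of $f$ forces $f(t_n)\to f(s)$, so summing the increments yields $|f(t)-f(s)|\le \sum_{n\ge 0}\phi(\mathrm{const}\cdot d_n)\,\Psi^{-1}\big(\mathrm{const}\cdot\Gamma/(\lambda_d d_n^{2d})\big)$.

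Finally I would compare this geometric sum with the integral in the statement: since $\Psi^{-1}$ and $\phi$ are monotone and the radii decay geometrically, each summand is dominated by the integral of $\Psi^{-1}\big(4^{d+1}\Gamma/(\lambda_d u^{2d})\big)$ against $\phi(du)$ over the block $[d_{n+1},d_n]$, and telescoping the blocks gives $8\int_0^{2|t-s|}\Psi^{-1}\big(4^{d+1}\Gamma/(\lambda_d u^{2d})\big)\phi(du)$, with the universal constants absorbed into the factors $8$ and $4^{d+1}$. In my view the main obstacle is exactly the bookkeeping in this last step: one must choose the decay rate of the $d_n$ and align the geometric sum with the integral so that the \emph{sharp} constants $8$, $4^{d+1}$ and the volume normalization $\lambda_d$ emerge correctly, rather than merely up to an unspecified dimensional factor. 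The double-selection step is conceptually the heart of the argument, but it becomes routine once the two volume inequalities are written out; it is the constant-tracking in the passage from the sum to the integral that demands the most care.
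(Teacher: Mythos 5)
A preliminary remark: the paper does not prove this lemma at all --- it is quoted from Nualart \cite[Lemma A.3.1]{Nua}, ultimately from Garsia--Rodemich--Rumsey, so your proposal can only be measured against the classical argument. Your outline does follow that argument in its broad strokes: the averaging/Markov bound on $I(w)=\int_{B_r(x)}\Psi\bigl(|f(w)-f(y)|/\phi(|w-y|)\bigr)dy$, the pigeonhole double selection of each $t_{n+1}$ (good for $I$ itself and good for the integrand of $I(t_n)$), and the origin of the exponent $2d$ from the product of two ball volumes are all correctly identified.

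However, there is a genuine gap, and it sits exactly in the step you dismiss as constant-tracking. You fix the radii by spatial halving, $d_{n+1}\asymp d_n/2$, and then claim each summand $\phi(c\,d_n)\,\Psi^{-1}\bigl(C\Gamma/(\lambda_d d_n^{2d})\bigr)$ is dominated by the block integral $\int_{d_{n+1}}^{d_n}\Psi^{-1}\bigl(4^{d+1}\Gamma/(\lambda_d u^{2d})\bigr)\phi(du)$. Since the integrand is decreasing in $u$, that block integral is of order $\Psi^{-1}(\cdot)\,\bigl(\phi(d_n)-\phi(d_{n+1})\bigr)$, so your comparison silently requires $\phi(c\,d_n)\le C'\bigl(\phi(d_n)-\phi(d_n/2)\bigr)$ --- a doubling-type condition on $\phi$ that is not among the hypotheses, which assume only that $\phi$ is continuous and strictly increasing. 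One can build such a $\phi$ that is nearly flat across every dyadic block (e.g. $\phi(2^{-n})\sim 1/\log n$, so that $\phi(d_n)-\phi(d_n/2)=o(\phi(d_n))$); then no uniform $C'$ exists and the proof as proposed fails outright, not merely with a worse constant. The classical fix is to choose the scales adaptively in the $\phi$-scale rather than the spatial scale: given the current point, define the next radius by $\phi(d_{n+1})=\tfrac12\phi(d_n)$ (in the one-dimensional original, $p(d_{n+1})=\tfrac12 p(t_n)$ relative to the previous point), which simultaneously forces convergence of the chain and yields $\phi(d_n)=4\bigl(\phi(d_{n+1})-\phi(d_{n+2})\bigr)$, so the telescoping produces the sharp constants $8$ and $4^{d+1}$ with no extra assumption on $\phi$. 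A second, smaller slip: you select $t_{n+1}$ in a ball around $t_n$, which does not make $t_n\to s$ as you assert; the selection regions must shrink toward the target point (balls around $s$ intersected with $B_r(x)$, where convexity guarantees volume at least $2^{-d}\lambda_d d_{n+1}^d$ --- this is where $\lambda_d$ and the $4^{d+1}$ enter), or, as in the original proof, one runs two chains from $s$ and from $t$ toward a common well-averaged point, the two chains classically accounting for the factor $8=2\times 4$. For the application in this paper one has $\phi(u)=u^q$, which is doubling, so your version would suffice there; but it does not prove the lemma as stated.
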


Applying Lemma \ref{Gar} with $\Psi(x)=|x|^p$, $\phi(u)=u^q$ for $p\geq 1$, $q>0$ and $pq>2d$, we have that \begin{align}
|f(t)-f(s)|\leq \frac{2^{\frac{2}{p}+q+3}}{\lambda_d^{\frac{1}{p}}\left(q-\frac{2d}{p}\right)}|t-s|^{q-\frac{2d}{p}}\left(\int_{B_1(x)}\int_{B_1(x)}\left(\frac{|f(t)-f(s)|}{|t-s|^{q}}\right)^pdsdt\right)^{\frac{1}{p}}\label{Garpol}
\end{align}
for $t,s\in B_1(x)$.

We set \begin{align*}
&W_{\beta,n}^x(\eta)=P_S^x\left[\prod_{k=1}^{n}\zeta_{k,S_k}(\beta,\eta)\right]
\intertext{and}
&W_{\beta,n}^{x}(\eta,y)=P_S^x\left[\prod_{k=1}^n\zeta_{k,S_k}(\beta,\eta):S_n=y\right]
\end{align*}
for $x,y\in\mathbb{Z}$.

For $-1\leq u\leq 1$, we define \begin{align*}
f_{n,\theta}(u)=\begin{cases}
\left(W_{\beta_n,Tn}^{un^{1/2}}(\eta)\right)^\theta,\ \ \ \ &un^{1/2}\in B_0^{n}(0)\\
\text{linear  interpolation,}\  &\text{otherwise}.
\end{cases}
\end{align*}

Then, we have that \begin{align*}
\max_{x\in B_0^n(0)}\left(W_{\beta_n,Tn}^x(\eta)\right)^\theta\leq \left(W_{\beta_n,Tn}(\eta)\right)^\theta+C_{p,q}B_{p,q,n,\theta},
\end{align*}
where \begin{align*}
B_{p,q,n,\theta}=\left(\int_{-1}^1\int_{-1}^1\left(\frac{|f_{n,\theta}(t)-f_{n,\theta}(s)|}{|t-s|^{q}}\right)^pdsdt\right)^{\frac{1}{p}}.
\end{align*}

We will show that for some $p\geq 1$, $q>0$ with $pq>2$, there exist $C_{p,T,\theta}>0$ and $\eta_{p,\theta}-pq>-1$ such that \begin{align}
&Q\left[|f_{n,\theta}(t)-f_{n,\theta}(s)|^p\right]\leq C_{p,T,\theta}|t-s|^{\eta_{p,\theta}},\ \ -1\leq s,t\leq 1.\label{contcri}
\end{align}

(\ref{contcri}) tells us the tightness of $\{W_{\beta_n,Tn}^{xn^{1/2}}(\eta):x\in [-1,1]\}$ in $C[-1,1]$  and $L^p$-boundedness of $ \max_{x\in[-1,1]}W_{\beta_n,Tn}^{xn^{1/2}}(\eta)^\theta$ for $p>1$ and therefore Lemma \ref{maxsup} follows.

\begin{proof}[Proof of (\ref{contcri})]
We remark that \begin{align*}
|f_{n,\theta}(t)-f_{n,\theta}(s)|\leq  \left|W_{\beta_n,Tn}^{tn^{1/2}}(\eta)-W_{\beta_n,Tn}^{sn^{1/2}}(\eta)\right|^{\theta},
\end{align*}
where we have used that $(x+y)^\theta \leq x^\theta+y^\theta$ for $x\geq 0,$ $y\geq 0$. First, we will estimate \begin{align*}
Q\left[\left|W_{\beta_n,Tn}^{x}(\eta)-W_{\beta_n,Tn}^{y}(\eta)\right|^2\right],
\end{align*}
for $x,y\in B_0^n(0)$.
When we define i.i.d.\,random variables by\begin{align*}
e_n(k,x)=\exp\left(\beta_n\eta(k,x)-\lambda(\beta_n)\right)-1,\ \ \ (k,x)\in\mathbb{N}\times \mathbb{Z},
\end{align*}
we find that \begin{align*}
Q[e_n(k,x)]=0,\ \ \text{and }\frac{Q\left[e_n(k,x)^2\right]}{\beta_n^2}=\frac{e(\lambda(2\beta_n)-2\lambda(\beta_n))-1}{\beta_n^2}\to 1.
\end{align*}

Then, we can write \begin{align*}
W_{\beta_n,Tn}^{x}(\eta)&=P_S^x\left[\prod_{i=1}^{Tn}\left(1+e_n(i,S_i)\right)\right]\\
&=1+\sum_{k=1}^{Tn} \sum_{1\leq i_1<\cdots <i_k\leq Tn}\sum_{\mathbf{x}\in\mathbb{Z}^k}\prod_{j=1}^kp_{i_j-i_{j-1}}(x_j-x_{j-1})e_n(i_j,x_j)\\
&=\sum_{k=0}^{Tn}\Theta^{(k)}(x),
\end{align*}
where $p_n(y)=P_S(S_n=y)$ for $(n,y)\in\mathbb{N}\times \mathbb{Z}$, $x_0=x$, $\mathbf{x}=(x_1,\cdots,x_k)$, and 
\begin{align*}
\Theta^{(k)}(x)=\begin{cases}
\dis 1,\ \ \ &k=0\\
\dis \sum_{1\leq i_1<\cdots <i_k\leq Tn}\sum_{\mathbf{x}\in\mathbb{Z}^k}\prod_{j=1}^kp_{i_j-i_{j-1}}(x_j-x_{j-1})e_n(i_j,x_j), \ \ &k\geq 1.
\end{cases}
\end{align*}

Then, it is easy to see that \begin{align*}
&Q\left[\Theta^{(k)}(x)\right]=0,\ \ k\geq 1
\intertext{and}
&Q\left[\Theta^{(k)}(x)\Theta^{(\ell)}(y)\right]=0,\ \ k\not=\ell,\ \ x,y\in\mathbb{Z}.
\end{align*}
 Thus, we have that \begin{align*}
&Q\left[\left|W_{\beta_n,Tn}^{x}(\eta)-W_{\beta_n,Tn}^{y}(\eta)\right|^2\right]\\
&=\sum_{k=1}^{Tn}Q\left[\left(\Theta^{(k)}(x)-\Theta^{(k)}(y)\right)^2\right]\\
&=\sum_{k=1}^{Tn}\left(Q[e_{n}(0,0)^2]\right)^k\sum_{1\leq i_1<\cdots <i_k\leq Tn} \sum_{\mathbf{x}\in\mathbb{Z}^k}(p_{j_1}(x_1-x)-p_{j_1}(x_1-y))^2\prod_{j=2}^kp_{i_j-i_{j-1}}(x_j-x_{j-1})^2.
\end{align*}

Since we know that for $k\geq 1$\begin{align*}
\frac{1}{n^{k/2}}\sum_{1\leq i_1<\cdots <i_k\leq Tn} \sum_{\mathbf{x}\in\mathbb{Z}^k}\prod_{j=1}^kp_{i_j-i_{j-1}}(x_j-x_{j-1})^2\leq \frac{C_7^kT^{k/2}}{\Gamma\left(\frac{k}{2}+1\right)},
\end{align*}
where $\Gamma(s)$ is a Gamma function at $s>0$ \cite[Section 3.4 and Lemma A.1]{AlbKhaQua},
\begin{align*}
Q\left[\left(\Theta^{(k)}(x)-\Theta^{(k)}(y)\right)^2\right]&\leq \sum_{z\in\mathbb{Z}}(p_{i}(z-x)-p_i(z-y))^2 Q[e_n(0,0)^2]\frac{C_7^{k-1}T^{\frac{k-1}{2}}}{\Gamma\left(\frac{k-1}{2}+1\right)}\\
&=Q[e_n(0,0)^2]\frac{C_7^{k-1}T^{\frac{k-1}{2}}}{\Gamma\left(\frac{k-1}{2}+1\right)}\sum_{1\leq i\leq Tn}(p_{2i}(0)-p_{2i}(x-y)).
\end{align*}

Since we know that there exists a constant $c>0$ such that for all $n\geq 1$ and $x,y\in\mathbb{Z}$ with $x-y\in 2\mathbb{Z}$,\begin{align}
|p_{n}(x+y)-p_{n}(x)|&\leq \frac{c|x|}{{n^{2}}}+2\left(\frac{1}{\sqrt{4\pi n}}\right)\left|\exp\left(-\frac{(x+y)^2}{4n}\right)-\exp\left(-\frac{x^2}{4n}\right)\right|\label{LLT}
\end{align}
(see \cite[Theorem 2.3.6]{LawLim}), we have that 
\begin{align}
Q\left[\left(\Theta^{(k)}(x)-\Theta^{(k)}(y)\right)^2\right]&\leq C\left|\frac{x-y}{\sqrt{n}}\right|\frac{C_7^{k-1}T^{\frac{k-1}{2}}}{\Gamma\left(\frac{k-1}{2}+1\right)}\label{theta2}
\end{align}
and
\begin{align*}
Q\left[\left|W_{\beta_n,Tn}^{x}(\eta)-W_{\beta_n,Tn}^{y}(\eta)\right|^2\right]\leq C_{1,T}\left|\frac{x-y}{\sqrt{n}}\right|,
\end{align*}
{for }$x,y\in B_0^n(0)$, where we remark that \begin{align*}
C_{1,T}=C\sum_{k\geq 1}\frac{C_7^{k-1}T^{\frac{k-1}{2}}}{\Gamma(\frac{k-1}{2}+1)}
\end{align*}
and \begin{align}
\varlimsup_{T\to\infty} \frac{1}{T}\log C_{1,T}\leq C_{8}.\label{C3}
\end{align}

Now, we would like to estimate  
\begin{align*}
Q\left[\left|W_{\beta_n,Tn}^{tn^{1/2}}(\eta)-W_{\beta_n,Tn}^{sn^{1/2}}(\eta)\right|^p\right]
\end{align*}
for $p\geq 2$, $s,t\in [-1,1]$ with $sn^{1/2}, tn^{1/2}\in B_0^n(0)$. 

Then, the hypercontractivity established in \cite[Proposition 3.11, Proposition 3.12, and Proposition 3.17]{MosOdoOle} allows us to estimate 
\begin{align*}
Q\left[\left|W_{\beta_n,Tn}^{tn^{1/2}}(\eta)-W_{\beta_n,Tn}^{sn^{1/2}}(\eta)\right|^p\right].
\end{align*}
Indeed, \begin{align*}
&Q\left[\left|W_{\beta_n,Tn}^{tn^{1/2}}(\eta)-W_{\beta_n,Tn}^{sn^{1/2}}(\eta)\right|^p\right]^{1/p}\\
&=Q\left[\left|\sum_{k=1}^{Tn}\left(\Theta^{(k)}(tn^{1/2})-\Theta^{(k)}(sn^{1/2})\right)\right|^p\right]^{1/p}\\
&\leq \left(\sum_{k=1}^{Tn}Q\left[\left|\Theta^{(k)}(tn^{1/2})-\Theta^{(k)}(sn^{1/2})\right|^p\right]\right)^{1/p}\\
&\leq \left(\sum_{k=1}^{Tn}\kappa_p^k\left(Q\left[\left(\Theta^{(k)}(tn^{1/2})-\Theta^{(k)}(sn^{1/2})\right)^2\right]^{1/2}\right)\right)^{1/p},
\end{align*}
where $\dis \kappa_p=2\sqrt{p-1}\sup_{n\geq 1}\frac{Q[e_n(0,0)^p]^{1/p}}{Q[e_n(0,0)^2]^{1/2}}<\infty$. $\kappa_p$ is finite since \begin{align*}
\lim_{n\to \infty}\frac{1}{\beta_n}Q\left[|e_n(0,0)|^p\right]^{1/p}=Q\left[|\eta(0,0)|^p\right]^{1/p}.
\end{align*}
We obtain from (\ref{theta2}) \begin{align*}
Q\left[\left|W_{\beta_n,Tn}^{tn^{1/2}}(\eta)-W_{\beta_n,Tn}^{sn^{1/2}}(\eta)\right|^p\right]&\leq C|t-s|^{\frac{p}{2}}\sum_{k\geq 1}\kappa_p^k\left(\frac{C_7^{k-1}T^{\frac{k-1}{2}}}{\Gamma(\frac{k-1}{2}+1)}\right)^{1/2}\\
&\leq C_{p,T}|t-s|^{\frac{p}{2}}.
\end{align*}

Thus, we find that for  $p\geq \frac{2}{\theta}$ \begin{align*}
\eta_{p,\theta}=\frac{p\theta}{2}
\end{align*}in (\ref{contcri}). Therefore, the proof completed when we take $p=\dis \frac{5}{\theta}$ and $q=\dis \frac{2\theta}{3}$.
\end{proof}

\subsection{Proof of Lemma \ref{partition1}}

The idea is the same as the proof of Lemma \ref{maxsup}.

\begin{proof}[Proof of Lemma \ref{partition1}] We set \begin{align*}
&W_{\beta,n}^{x}(\eta,A)=\sum_{y\in A}P_S^x\left[\prod_{k=1}^n\zeta_{k,S_k}(\beta,\eta):S_n=y\right]
\end{align*}
for $A\subset \mathbb{Z}$.

Then, we know that \begin{align*}
&\left|\left(W_{\beta_n,Tn}^x(\eta,B_z^n)\right)^\theta-\left(W_{\beta_n,Tn}^y(\eta,B_z^n)\right)^\theta\right|\\
&\leq \left|W_{\beta_n,Tn}^x(\eta,B_z^n)-W_{\beta_n,Tn}^y(\eta,B_z^n)\right|^\theta.
\end{align*}
By the same argument as the proof of Lemma \ref{maxsup} that \begin{align*}
&Q\left[\max_{x\in B_0^n}\left(\sum_{w\in B_z^n}W_{\beta_n,Tn}^x(\eta,w)\right)^\theta\right]\\
&\leq Q\left[\left(W_{\beta_n,Tn}(\eta,B_z^n)\right)^\theta\right]+C_{p,q}B_{p,q,n,\theta,z,T}\\
&\leq \left(\sum_{y\in B_z^n}p_{Tn}(y)\right)^{\theta}+C_{p,q}B_{p,q,n,\theta,z,T},
\end{align*}
where \begin{align*}
B^p_{p,q,n,\theta,z,T}=\int_{-1}^1\int_{-1}^1 \frac{Q\left[\left|W_{\beta_n,Tn}^{tn^{1/2}}(\eta,B_z^n)-W_{\beta_n,Tn}^{tn^{1/2}}(\eta,B_z^n)\right|^{p\theta}\right]}{|t-s|^{pq}}dsdt.
\end{align*}
We write \begin{align*}
&W_{\beta_n,Tn}^{x}(\eta,w)\\
&=P_{S}^x\left[\prod_{i=1}^{Tn}(1+e_n({i,S_i})):S_{Tn}=w\right]\\
&=p_{Tn}(w-x)\\
&+p_{Tn -i_{k}}(w-x_k)\sum_{k=1}^{Tn}\sum_{1\leq  i_1<\cdots<i_k\leq Tn}\sum_{{\bf x}\in \mathbb{Z}^k}\left(\prod_{i=1}^k p_{i_j-i_{j-1}}(x_i-x_{j-1})e_n(i_{j},x_j)\right)\\
&=\sum_{k=0}^{Tn}\Theta^{(k)}(x,w),
\end{align*}
where \begin{align*}
&\Theta^{(k)}(x,w)\\
&=\begin{cases}
\dis p_{Tn}(x,w),\ \ &k=0\\
\dis p_{Tn -i_{k}}(w-x_k)\sum_{k=1}^{Tn}\sum_{1\leq  i_1<\cdots<i_k\leq Tn}\sum_{{\bf x}\in \mathbb{Z}^k}\left(\prod_{i=1}^k p_{i_j-i_{j-1}}(x_i-x_{j-1})e_n(i_{j},x_j)\right),\ \ &k\geq 1.
\end{cases}
\end{align*}

Then, we have that \begin{align*}
&Q\left[\Theta^{(k)}(x,w)\right]=0,\ \ \ k\geq 1\\
&Q\left[\Theta^{(k)}(x,y)\Theta^{(\ell)}(z,w)\right]=0,\ \ k\not=\ell.
\end{align*}
Hence, 
\begin{align*}
Q\left[\left(\sum_{w\in B_z^n}(\Theta^{(0)}(x,w)-\Theta^{(0)}(y,w))\right)^2\right]& =\left(\sum_{w\in B_z^n}\left(p_{Tn}(w-x)-p_{Tn}(w-y)\right)\right)^2.
\end{align*}
(\ref{LLT}) implies that \begin{align*}
Q\left[\left(\sum_{w\in B_z^n}(\Theta^{(0)}(tn^{1/2},w)-\Theta^{(0)}(sn^{1/2},w))\right)^2\right]& =C_{2,T}|t-s|,\ \ \ t,s\in[0,1],
\end{align*}
where $C_{2,T}\to 0 $ as $T\to \infty$.
Also, we have that for $k\geq 1$\begin{align*}
&Q\left[\left(\sum_{w\in B_z^n}(\Theta^{(k)}(x,w)-\Theta^{(k)}(y,w))\right)^2\right]\\
&=Q[e_{n}(0,0)^2]^k\sum_{1\leq i_1<\cdots<i_k\leq Tn}\sum_{\x\in\mathbb{Z}^k}\\
&\ \ \ (p_{i_1}(x_1-x)-p_{i_1}(x_1-y))^2\left(\prod_{i=2}^kp_{i_{j}-i_{j-1}}(x_j-x_{j-1})^2\right)^2\left(\sum_{w\in B_{z}^n}p_{\lfloor T_n\rfloor-i_k}(w-x_{k})\right)^2\\
&\leq Q[e_{n}(0,0)^2]^k\sum_{1\leq i_1<\cdots<i_k\leq Tn}(p_{2i_1}(0)-p_{2i_1}(x-y))^2\prod_{i=2}^kp_{2(i_{j}-i_{j-1})}(0)\\
&\leq C\frac{|x-y|}{n^{1/2}}\frac{C_7^{k-1}T^{\frac{k-1}{2}}}{\Gamma\left(\frac{k-1}{2}+1\right)}
\end{align*}
as the proof of Lemma \ref{maxsup}.

We obtain by H\"older's inequality  that for $p=\frac{5}{\theta}$\begin{align*}
&Q\left[\left|W_{\beta_n,Tn}^{tn^{1/2}}(\eta,B_z^n)-W_{\beta_n,Tn}^{tn^{1/2}}(\eta,B_z^n)\right|^{p\theta}\right]\\
&\leq Q\left[\left|W_{\beta_n,Tn}^{tn^{1/2}}(\eta,B_z^n)-W_{\beta_n,Tn}^{tn^{1/2}}(\eta,B_z^n)\right|^{\frac{9}{2}}\left|W_{\beta_n,Tn}^{tn^{1/2}}(\eta,B_z^n)+W_{\beta_n,Tn}^{tn^{1/2}}(\eta,B_z^n)\right|^{\frac{1}{2}}\right]\\
&\leq Q\left[\left|W_{\beta_n,Tn}^{tn^{1/2}}(\eta,B_z^n)-W_{\beta_n,Tn}^{tn^{1/2}}(\eta,B_z^n)\right|^{9}\right]^{\frac{1}{2}}Q\left[W_{\beta_n,Tn}^{tn^{1/2}}(\eta,B_z^n)+W_{\beta_n,Tn}^{tn^{1/2}}(\eta,B_z^n)\right]^{\frac{1}{2}}\\
&\leq C_{3,T}|t-s|^{9/2}\left(2\sum_{w\in B_z^n}(p_{Tn}(tn^{1/2},w)+p_{Tn}(sn^{1/2},w))\right)^\frac{1}{2},
\end{align*}
where we have used the hypercontractivity as the proof of Lemma \ref{maxsup}, $C_{3,T}$ is independent of the choice of $z$ and \begin{align*}
\varlimsup_{T\to\infty}\frac{1}{T}\log C_{3,T}\leq C<\infty.
\end{align*}
Also, we know that 
\begin{align*}
\sum_{w\in B_z^n}p_{Tn}(x,w)\leq \exp\left(-\frac{z^2n}{Tn}\right)
\end{align*}
for $x\in B_0^n$. Thus, we obtain that if $I^{(\theta)}(T)\asymp T^2$ with $p=\frac{5}{\theta}$, $q=\frac{\theta}{2}$, there exist $C_1>0$ and $C_2>0$ such that \begin{align*}
\sum_{z\in I^{(\theta)}(T)^c} \left(\left(\sum_{y\in B_z^n}p_{Tn}(y)\right)^{\theta}+C_{p,q}B_{p,q,n,\theta,z,T}\right)\leq C_1\exp\left(-C_2T^2\right).
\end{align*}

\end{proof}


\section{Continuum directed polymers}\label{4}
To prove Lemma \ref{freeenecdp} and Lemma \ref{ttheta}, we recall the property of continuum directed polymers.

\subsection{Continuum directed polymers}
The mild solution to stochastic heat equation \begin{align*}
\partial_t \Z=\frac{1}{2}\Delta \Z+\beta \Z \dot{\W},\ \ \lim_{t\searrow 0}\Z(t,y)=\delta_x(y) 
\end{align*}
has the following representation using Wiener chaos expansion:\begin{align*}
\Z^x_\beta(T,w)&=\rho_T(x,w)\\
&+\sum_{n\geq 1}\beta^n\int_{\Delta_n(T)}\int_{\mathbb{R}^n}\left(\prod_{i=1}^n\rho_{t_i-t_{i-1}}(x_i-x_{i-1})\right)\rho_{T-t_n}(w-x_n)\W(dt_1dx_1)\cdots \W(dt_ndx_n),
\end{align*}
where we set $x_0=x$\begin{align*}
\rho_t(x,w)=\rho_t(x-w)=\frac{1}{\sqrt{2\pi t}}\exp\left(-\frac{(x-w)^2}{2t}\right),\ \  \ t>0,\ x,w\in\mathbb{R},
\end{align*}
and \begin{align*}
\Delta_n(T)=\{(t_1,\cdots,t_n):0< t_1< \cdots< t_n\leq T\}.
\end{align*}

Also, we define the four parameter field by \begin{align*}
\Z_\beta(s,x;t,y)&=\rho_{t-s}(x,y)\\
&\hspace{-3em}+\sum_{n\geq 1}\beta^n\int_{\Delta_n(s,t)}\int_{\mathbb{R}^n}\left(\prod_{i=1}^n\rho_{t_i-t_{i-1}}(x_i-x_{i-1})\right)\rho_{t-t_n}(y-x_n)\W(dt_1dx_1)\cdots \W(dt_ndx_n),
\end{align*}
for $ 0\leq s<t<\infty,\ \ x,y\in\mathbb{R}^2$, where we set $t_0=s$ and \begin{align*}
\Delta_n(s,t)=\{(t_1,\cdots,t_n):s< t_1< \cdots< t_n\leq t\}.
\end{align*}

Also, we define  \begin{align*}
\Z_{\beta}^{(s,x)}(t)=\int_{\mathbb{R}}\Z_{\beta}(s,x;t,y)dy, \ \ \text{for }0\leq s<t<\infty,\ \ x\in\mathbb{R}.
\end{align*}

Then, we have the following fact\cite[Theorem 3.1]{AlbKhaQua2}:
\begin{thm}\label{cdpprop}There exists a version of the field $\Z_\beta(s,x;t,y)$ which is jointly continuous in all four variables and have the following  properties:
\begin{enumerate}[(i)]
\item $P_\Z\left[\Z_\beta(s,x;t,y)\right]=\rho_{t-s}(y-x)$.
\item ({\bf Stationary}): $\dis \Z_\beta(s,x;t,y)\stackrel{d}{=}\Z_\beta(s+u_0,x+z_0;t+u_0,y+z_0)$.
\item  ({\bf Scaling}): $\dis \Z_\beta(r^2s,rx;r^2t,ry)\stackrel{d}=\frac{1}{r}\Z_{\beta\sqrt{r}}(s,x;t,y)$.
\item ({\bf Positivity}): With probability one, $\Z_\beta(s,x;t,y)$ is strictly positive for all tuples $(s,x;t,y)$ with $0\leq s<t$.
\item The law of $\dis \frac{\Z_\beta(s,x;t,y)}{\rho_{t-s}(y-x)}$ does not depend on $x$ or $y$.
\item It has an independent property among disjoint time intervals: for any finite $\{(s_1,t_i]\}_{i=1}^n$ and any $x_i,y_i\in\mathbb{R}$, the random variables $\{\Z_\beta(s_i,x_i;t_i,y_i)\}_{i=1}^n$ are mutually independent.
\item ({\bf Chapman-Kolmogorov equations}): With probability one, for all $0\leq s<r<t$ and $x,y\in\mathbb{R}$,\begin{align*}
\Z_\beta({s,x;t,y})=\int_{\mathbb{R}}\Z_\beta(s,x;r,z)\Z_\beta(r,z;t,y)dz.
\end{align*}
\end{enumerate}

\end{thm}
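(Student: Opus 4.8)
The plan is to construct the field directly from its Wiener chaos series and then read off the seven properties, deferring only positivity (iv) to a separate comparison argument. First I would show that the series defining $\Z_\beta(s,x;t,y)$ converges in $L^2(P_\Z)$ for every $\beta$ and every fixed tuple. Since distinct chaoses are orthogonal, the variance is $\sum_n \beta^{2n}$ times the integral of the squared kernel over $\Delta_n(s,t)\times\mathbb{R}^n$; using $\rho_u(x)^2=\frac{1}{2\sqrt{\pi u}}\rho_{u/2}(x)$ to integrate out the spatial variables and then bounding the time-simplex integral by $C^n(t-s)^{n/2}/\Gamma(n/2+1)$ (the continuum analogue of the Gamma-function bound already invoked in the proof of Lemma \ref{maxsup}), the series is dominated by an entire function of $\beta$. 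This gives $L^2$ convergence, and property (i) is immediate because only the $n=0$ term $\rho_{t-s}(y-x)$ has nonzero mean.

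Next I would upgrade to all moments and then to joint continuity. Gaussian hypercontractivity on Wiener chaos (the same tool producing the constant $\kappa_p$ in Lemma \ref{maxsup}) bounds the $L^p$ norm of the $n$-th chaos term by $\kappa_p^n$ times its $L^2$ norm, so $\Z_\beta(s,x;t,y)\in L^p$ for all $p$. To obtain a continuous version I would apply Kolmogorov's criterion in the four variables $(s,x,t,y)$. The key input is an $L^2$ estimate on the difference of corresponding chaos terms at nearby tuples, obtained from Hölder continuity of the heat kernel $\rho_u(x)$ in both $u$ and $x$, with the usual care near $u=0$ where the diagonal $s=t$ degenerates and must be treated via the normalization $\Z_\beta/\rho_{t-s}$; hypercontractivity then lifts this difference bound to $L^p$ with $p$ large enough that the Hölder exponent exceeds the ambient dimension $4$, yielding the jointly continuous version.

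With the continuous version in hand, the structural symmetries follow from invariances of the chaos integrals. Properties (ii) and (iii) come from translation invariance $\W(\cdot+u_0,\cdot+z_0)\stackrel{d}{=}\W$ and the Brownian scaling $\W(r^2\,dt,r\,dx)\stackrel{d}{=}r^{3/2}\,\W(dt,dx)$ of space-time white noise, combined with $\rho_u(x-w)$ depending only on differences and the heat-kernel scaling $\rho_{r^2u}(rx)=r^{-1}\rho_u(x)$: substituting these into the series produces $n+1$ factors $r^{-1}$, $n$ factors $r^{3/2}$, and $\beta^n$, which reassemble into $r^{-1}(\beta\sqrt r)^n$ and hence into the series for $\Z_{\beta\sqrt r}$, giving (iii); (v) is then immediate from (ii) by translating $x,y$ to the origin. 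Property (vi) holds because chaos integrals supported on disjoint time intervals are built from white noise on disjoint regions. For (vii) I would split each simplex $\Delta_n(s,t)$ according to how many time coordinates lie in $(s,r]$ versus $(r,t]$; summing over the split and integrating out the intermediate spatial variable $z$ reproduces $\int_\mathbb{R}\Z_\beta(s,x;r,z)\Z_\beta(r,z;t,y)\,dz$ term by term, and the joint continuity promotes this fixed-tuple identity to one valid almost surely simultaneously for all $s<r<t$.

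The main obstacle is positivity (iv), since the chaos series carries no sign information. I would first establish strict positivity at a fixed tuple, either by approximating $\Z_\beta$ by manifestly positive objects (Cole--Hopf solutions, or the normalized discrete partition functions of Theorem \ref{thmref}, which converge to it and are positive by construction) or by invoking Mueller's comparison/support theorem for the stochastic heat equation. I would then extend from positivity at each fixed tuple to simultaneous positivity over all $0\le s<t$ using the already-proven joint continuity together with the independence (vi) and Chapman--Kolmogorov (vii) relations, which force the continuous field to be strictly positive on a dense set and hence everywhere.
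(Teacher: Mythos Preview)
The paper does not prove this theorem at all: it is stated as a quoted fact, introduced by ``Then, we have the following fact \cite[Theorem 3.1]{AlbKhaQua2}'', and no argument is given in the present paper. So there is nothing in this paper to compare your proposal against.

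For what it is worth, your outline is broadly the standard one and is essentially the strategy carried out in the cited reference \cite{AlbKhaQua2}: $L^2$ convergence of the chaos series via the Gamma-function bound, hypercontractivity plus Kolmogorov for joint continuity, white-noise invariances for (ii), (iii), (v), (vi), the simplex-splitting computation for (vii), and a separate positivity argument for (iv). The one place where your sketch is a bit loose is the last paragraph on (iv): continuity alone does not let you pass from strict positivity at each fixed tuple (a probability-one statement for each tuple) to simultaneous strict positivity over the uncountable set of all tuples, and invoking (vi) and (vii) does not obviously close that gap either. In \cite{AlbKhaQua2} this is handled by a quantitative negative-moment or small-ball estimate (of the type underlying Corollary \ref{cor}) that is uniform enough to survive a union bound over a countable dense set, after which continuity finishes the job; you would need to make that step explicit.
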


The following is the corollary of \cite[Theorem 1.1]{AmiCorQua}.
 \begin{thm}\label{limitcdp}
$\dis \frac{1}{T}\log \Z_1(T,0)$ converges to $\dis -\frac{1}{4!}$ in probability as $T\to \infty$.
\end{thm}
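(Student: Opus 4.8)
The plan is to deduce this law of large numbers directly from the one-point fluctuation theorem of Amir, Corwin and Quastel, reading $\Z_1(T,0)=\Z_1(0,0;T,0)$ as the narrow-wedge solution of $\partial_t\Z=\frac12\Delta\Z+\Z\dot{\W}$ started from $\delta_0$ and evaluated at $(T,0)$. Their Theorem~1.1 furnishes an exact (Fredholm-determinant) description of the law of $\log\Z_1(T,0)$, from which one extracts that its fluctuations live on the scale $T^{1/3}$ around the linear term $-T/24$. Concretely, setting
\begin{align*}
\xi_T=\frac{\log\Z_1(T,0)+\frac{T}{24}}{T^{1/3}},
\end{align*}
one obtains $\xi_T\Rightarrow\xi$ in distribution as $T\to\infty$, where $\xi$ is a nondegenerate (Tracy--Widom type crossover) random variable, in particular finite almost surely. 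Since $4!=24$, the centering constant $-1/24$ is exactly the value $-1/4!$ claimed.

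Granting this, convergence in probability follows from the separation of scales between the centering ($\sim T$) and the fluctuations ($\sim T^{1/3}$): only the finiteness (properness) of the limit $\xi$ is needed, not its precise form. Fix $\ve>0$ and $\delta>0$, and choose a continuity point $M$ of the law of $\xi$ with $\PM(|\xi|\ge M)<\delta$. By $\xi_T\Rightarrow\xi$ we have $\varlimsup_{T\to\infty}\PM(|\xi_T|\ge M)\le\PM(|\xi|\ge M)<\delta$. Writing
\begin{align*}
\frac{1}{T}\log\Z_1(T,0)+\frac{1}{24}=T^{-2/3}\,\xi_T
\end{align*}
and noting $\ve T^{2/3}>M$ for all large $T$, we obtain
\begin{align*}
\varlimsup_{T\to\infty}\PM\!\left(\left|\tfrac1T\log\Z_1(T,0)+\tfrac1{24}\right|>\ve\right)
=\varlimsup_{T\to\infty}\PM\!\left(|\xi_T|>\ve T^{2/3}\right)
\le\varlimsup_{T\to\infty}\PM(|\xi_T|\ge M)<\delta.
\end{align*}
As $\delta>0$ was arbitrary, the left-hand side is $0$, so $\frac1T\log\Z_1(T,0)\to-\frac1{24}=-\frac1{4!}$ in probability, which is the assertion.

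The only genuine issue is bookkeeping rather than analysis: one must confirm that the normalization of the stochastic heat equation in the cited statement matches the one fixed here (the factor $\frac12$ on the Laplacian, the coupling $\beta=1$, and the sign convention for the white noise), so that the per-unit-time constant is exactly $-1/24$ and not a rescaled value. Should the cited normalization differ by a spatial/temporal dilation, I would invoke the scaling identity (iii) of Theorem \ref{cdpprop}, namely $\Z_\beta(r^2s,rx;r^2t,ry)\stackrel{d}{=}r^{-1}\Z_{\beta\sqrt r}(s,x;t,y)$, to transport the ACQ normalization to ours; such a dilation only rescales $T$ by a fixed constant and therefore leaves the per-unit-time limit $-1/24$ unchanged. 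This is the step I expect to require the most care, while the probabilistic deduction above is entirely soft.
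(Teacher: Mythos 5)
Your proposal is correct and takes essentially the same route as the paper, which offers no argument beyond declaring the theorem a corollary of \cite[Theorem 1.1]{AmiCorQua}: your derivation---weak convergence of the $T^{1/3}$-rescaled, $-T/4!$-centered free energy to a proper crossover law, combined with the separation of scales between the linear centering and the $T^{1/3}$ fluctuations---is exactly the omitted deduction, and your normalization check ($\tfrac12\Delta$, $\beta=1$, noise sign irrelevant in law) is the right bookkeeping. The only detail left implicit is that ACQ center $\log\Z_1(T,0)$ after dividing by the heat kernel, so the true centering carries an extra $-\log\sqrt{2\pi T}$ term; since this is $o(T^{1/3})$, your $\xi_T$ still converges to the same limit and the argument is unaffected.
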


Also, the following is the result obtained by Moreno \cite{Mor}:

\begin{cor}\label{cor}
For any $\beta\geq 0$ and $p\geq 1$, $\left(\Z_{\sqrt{2}}(t)\right)^{-1}\in L^p$.
\end{cor}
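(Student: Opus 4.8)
The plan is to reduce the corollary to a small-ball (lower-tail) estimate for the total mass $\Z_\beta(t)$. For a strictly positive random variable $X$ one has the elementary identity
$$P_\Z\left[X^{-p}\right]=\int_0^\infty P_\Z\left(X<u^{-1/p}\right)du\leq 1+p\int_0^1 P_\Z\left(X<\epsilon\right)\epsilon^{-p-1}d\epsilon,$$
so, taking $X=\Z_\beta(t)$, we get $\Z_\beta(t)^{-1}\in L^p$ for every $p\geq 1$ as soon as the lower tail $P_\Z\left(\Z_\beta(t)\leq \epsilon\right)$ decays faster than any power of $\epsilon$ as $\epsilon\to 0$. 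Since Theorem \ref{cdpprop}(iv) already guarantees $\Z_\beta(t)>0$ almost surely, the entire content of the corollary is this quantitative super-polynomial decay; I would spend no effort on qualitative positivity and concentrate solely on the small-ball probability.

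First I would record two structural reductions. By the scaling identity Theorem \ref{cdpprop}(iii), integrating the endpoint $y$ yields $\Z_\beta(r^2 t)\stackrel{d}{=}\Z_{\beta\sqrt r}(t)$, so it suffices to treat a single convenient pair $(\beta,t)$ and recover all others by scaling. Moreover, reversing the time direction of the white noise $\W$ and invoking stationarity (Theorem \ref{cdpprop}(ii)) identifies, in distribution,
$$\Z_\beta(t)=\int_{\mathbb{R}}\Z_\beta(0,0;t,y)\,dy\stackrel{d}{=}\int_{\mathbb{R}}\Z_\beta(0,y;t,0)\,dy,$$
and the right-hand side is exactly the value at the point $(t,0)$ of the mild solution of the stochastic heat equation started from the flat (Lebesgue) initial datum. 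This is precisely the hypothesis of Moreno's positivity theorem for SHE from a nonnegative, non-degenerate initial measure, and invoking it delivers finiteness of all negative moments at once.

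If instead one wants a self-contained lower-tail bound, I would build it recursively from the Markovian structure of the field. Splitting $[0,t]$ at intermediate times $t_i=it/N$ and iterating the Chapman--Kolmogorov relation Theorem \ref{cdpprop}(vii) gives
$$\Z_\beta(t)=\int_{\mathbb{R}^{N-1}}\left(\prod_{i=1}^{N-1}\Z_\beta(t_{i-1},z_{i-1};t_i,z_i)\right)\Z_\beta^{(t_{N-1},z_{N-1})}(t)\,dz_1\cdots dz_{N-1},$$
whose factors over disjoint time intervals are independent by Theorem \ref{cdpprop}(vi). Restricting the integral to a corridor of bounded spatial boxes bounds $\Z_\beta(t)$ from below by a product of $N$ independent contributions, each strictly positive and with a distribution controlled by scaling. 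The delicate point is that a single fixed base small-ball estimate, propagated naively through such a product, yields only polynomial decay; obtaining super-polynomial decay forces one to let the number of blocks $N$ grow with $\epsilon$ and to run a genuine large-deviation optimization against the shrinking one-step time.

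The main obstacle is therefore the lower tail itself, and it is worth seeing why the cheap tools do not reach it. Positive moments and the upper tail of $\Z_\beta(t)$ are easy: the Wiener-chaos expansion together with hypercontractivity, as already exploited in Lemmas \ref{maxsup} and \ref{partition1}, controls every $L^p$ norm. Gaussian concentration, on the other hand, bounds deviations only in terms of the Malliavin gradient, and here $D_{r,z}\log\Z_\beta(t)=\beta\,\Z_\beta(0,0;r,z)\,\Z_\beta^{(r,z)}(t)/\Z_\beta(t)$ has squared norm equal to $\beta^2$ times the polymer overlap, which is not almost surely bounded; this blocks a direct lower-tail estimate. All the real work, whether one cites Moreno or reproves the estimate, lies in exploiting the multiplicative Markov structure (Chapman--Kolmogorov together with independence over disjoint time intervals) to localize the path and convert the $N$-fold independent product into super-polynomial small-ball decay.
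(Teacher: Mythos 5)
Your main route coincides with the paper's: the paper gives no proof of Corollary \ref{cor} at all, stating it as a result imported directly from Moreno \cite{Mor}, and your reduction (negative moments follow from super-polynomial lower-tail decay via $P_\Z\left[X^{-p}\right]\leq 1+p\int_0^1 P_\Z\left(X<\ve\right)\ve^{-p-1}d\ve$, together with the scaling identity from Theorem \ref{cdpprop}(iii) and time reversal) terminates in exactly that citation, which is where all the substance lies. Your alternative Chapman--Kolmogorov block sketch is, as you yourself flag, not a complete proof --- the naive independent-product bound yields only polynomial decay --- but since you do not rely on it, the proposal is correct and essentially matches the paper's treatment.
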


\subsection{Proof of Lemma \ref{ttheta}}

We first show a weak statementt:
\begin{lem}\label{wttheta}
We have that \begin{align*}
\varlimsup_{\theta\to 0}\varlimsup_{T\to\infty}\frac{1}{T\theta}P_\Z\left[\left(\Z_{\sqrt{2}}(T)\right)^\theta\right]\leq F_\Z(\sqrt{2}).
\end{align*}
\end{lem}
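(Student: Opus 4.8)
The plan is to reduce Lemma \ref{wttheta} to a uniform-in-$T$ concentration estimate for $\log\Z_{\sqrt{2}}(T)$ about its mean. Write $m_T=P_\Z[\log\Z_{\sqrt2}(T)]$ and $\tilde X_T=\log\Z_{\sqrt2}(T)-m_T$. For $\theta\in(0,1)$ one has the exact identity
\[
\frac{1}{T\theta}\log P_\Z\left[\left(\Z_{\sqrt2}(T)\right)^\theta\right]=\frac{m_T}{T}+\frac{1}{T\theta}\log P_\Z\left[e^{\theta\tilde X_T}\right].
\]
By Lemma \ref{freeenecdp}, $m_T/T\to F_\Z(\sqrt2)$, and in fact $m_T/T\le F_\Z(\sqrt2)$ for every $T$ since $F_\Z(\sqrt2)=\sup_{T>0}m_T/T$. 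Thus the entire statement follows once I show that the centered log-moment-generating term is negligible, in the precise form that there is $\delta(\theta)\to0$ as $\theta\to0$ with
\[
\varlimsup_{T\to\infty}\frac{1}{T}\log P_\Z\left[e^{\theta\tilde X_T}\right]\le\theta\,\delta(\theta),\qquad\theta\in(0,1).
\]
Granting this, I take $\varlimsup_{T\to\infty}$ for fixed $\theta$ to get the bound $F_\Z(\sqrt2)+\delta(\theta)$, and then $\varlimsup_{\theta\to0}$ to conclude. (The opposite inequality $\frac{1}{T\theta}\log P_\Z[(\Z_{\sqrt2}(T))^\theta]\ge m_T/T$ is immediate from Jensen, so the asserted $\varlimsup$ is in fact equal to $F_\Z(\sqrt2)$; only the upper estimate is genuine content.)

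The heart of the matter is therefore the sub-Gaussian bound $\log P_\Z[e^{\theta\tilde X_T}]\le C\theta^2T$ uniformly in $T$, which gives $\delta(\theta)=C\theta$. I would prove it in exact analogy with Lemma \ref{lpbdd}, transported to the continuum, where the role of the discrete gradient of $\log W_{\beta_n,Tn}$ is played by the Malliavin derivative of $\log\Z_{\sqrt2}(T)$ with respect to the white noise $\W$ on $[0,T]\times\mathbb{R}$. Writing $D_{s,y}\log\Z_{\sqrt2}(T)=\sqrt2\,\pi_T(s,y)$, where $\pi_T(s,y)=\Z_{\sqrt2}(0,0;s,y)\,\Z_{\sqrt2}^{(s,y)}(T)/\Z_{\sqrt2}(T)$ is the density of the quenched continuum polymer passing through $(s,y)$, one gets the energy identity
\[
\left\|D\log\Z_{\sqrt2}(T)\right\|^2_{L^2([0,T]\times\mathbb{R})}=2\int_0^T\!\!\int_{\mathbb{R}}\pi_T(s,y)^2\,dy\,ds=2\,\mathcal{O}_T,
\]
the continuum self-overlap, which is the exact counterpart of $\beta_n^2(\mu_{\beta_n,Tn}^{\eta})^{\otimes2}[L_{Tn}(S,S')]$ appearing in Lemma \ref{lpbdd}. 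The expectation of $\mathcal{O}_T$ is of order $T$, and Gaussian concentration (the Herbst argument / log-Sobolev inequality for white noise) converts an almost-sure bound $\mathcal{O}_T\le CT$ into the desired estimate $P_\Z[e^{\theta\tilde X_T}]\le e^{C\theta^2T}$.

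Since $\mathcal{O}_T$ is random rather than bounded, I would run the same good-event argument as in Lemma \ref{lpbdd}: on the event $A_T=\{\,\Z_{\sqrt2}(T)\ge\tfrac12e^{m_T},\ \mathcal{O}_T\le CT\,\}$ the concentration estimate applies, while the contribution of $A_T^c$ to the moment generating function is absorbed using the negative-moment control of Corollary \ref{cor} together with the replica computation bounding $P_\Z[\mathcal{O}_T]$ and the $L^2$-boundedness of $\Z_{\sqrt2}(T)$ per unit time. The \textbf{main obstacle} is precisely this uniform-in-$T$ overlap control: one must secure that $\mathcal{O}_T$ grows only linearly in $T$ and that the bad event $A_T^c$ is negligible at the exponential scale, so that the concentration constant is of the sharp order $T$ (not larger) and hence $\delta(\theta)\to0$. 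This is the continuum analogue of establishing (\ref{lbdd}), and it is where Corollary \ref{cor} and the overlap estimate are essential.
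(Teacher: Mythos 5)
Your top-level reduction coincides with the paper's: writing $P_\Z[(\Z_{\sqrt2}(T))^\theta]=e^{\theta m_T}P_\Z[e^{\theta \tilde X_T}]$ and using $m_T/T\le F_\Z(\sqrt2)$ (superadditivity, as in Lemma \ref{freeenecdp}), everything hinges on the uniform sub-Gaussian estimate $\log P_\Z[e^{\theta\tilde X_T}]\le C\theta^2 T$, which is exactly the paper's inequality (\ref{thetaz}). The gap is in your proof of that estimate. The Herbst/log-Sobolev argument needs an almost sure (or at least exponentially-high-probability) bound $\mathcal{O}_T\le CT$ on the Malliavin energy; no such bound is available, and you propose to substitute the good-event device of Lemma \ref{lpbdd}. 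That device cannot deliver what you need, for two reasons. First, the inequality behind Lemma \ref{lpbdd} (Lemma \ref{tal}, from \cite{CarTonTor}) is intrinsically one-sided: it bounds $Q(f<a-t)$ given that $\{f>a,\,|\nabla f|\le c\}$ has probability bounded below, i.e.\ it yields only \emph{lower-tail} concentration. That suffices in Lemma \ref{lpbdd} because the upper tail of $\log W$ is trivial there ($Q[W]=1$ plus Markov), but for $\theta>0$ the moment generating function $P_\Z[e^{\theta\tilde X_T}]$ is governed by \emph{upper} deviations of $\log\Z_{\sqrt2}(T)$ above its mean $m_T\approx -T/6$, at scales up to order $T$; Markov's inequality only controls deviations above $0$ and gives merely $\frac{1}{T\theta}\log P_\Z[\Z_{\sqrt2}(T)^\theta]\le o(1)$, not $\le F_\Z(\sqrt2)+o(1)$. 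Second, the good event $A_T$ in a Lemma-\ref{lpbdd}-style argument has probability bounded below only by a constant $\delta>0$, not by $1-e^{-cT}$, so ``absorbing the contribution of $A_T^c$'' is circular: on $A_T^c$ you control $e^{\theta\tilde X_T}$ by nothing better than the trivial bound, and making $P_\Z\bigl(A_T^c\cap\{\tilde X_T\ge \epsilon T\}\bigr)$ small is precisely the concentration you are trying to prove; nor can you restrict the upper tail to $\{\mathcal{O}_T\le CT\}$, since large values of $\log\Z_{\sqrt2}(T)$ may well co-occur with large overlap.

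The paper avoids overlap control altogether: it decomposes $\log\Z_{\sqrt2}(T)-m_T=\sum_i V_i^T$ into martingale differences over unit time slices of the noise, introduces $\hat\Z_{\sqrt2}(i,T)$ with the noise on $[i-1,i]$ integrated out, and bounds the conditional exponential moments $P_\Z[e^{\pm V_i^T}\mid\mathcal{F}_{i-1}(T)]$ by constants, using Jensen's inequality against the polymer measure $\mu^{(i)}_T$, Chapman--Kolmogorov, stationarity, and the negative-moment bound of Corollary \ref{cor} (Moreno \cite{Mor}) applied to the unit-time ratio $\Z_{\sqrt2}(i-1,x;i,y)/\rho_1(x,y)$; the two-sided bound (\ref{thetaz}), $P_\Z[e^{\theta\tilde X_T}]\le\exp\bigl(T\theta^2K/(1-|\theta|)\bigr)$, then follows from the martingale inequality of \cite[Theorem 2.1]{LiuWat}. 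If you want to repair your write-up, replace the Herbst/overlap step by this slice-martingale argument; note that your intended use of Corollary \ref{cor} reappears there, but as a bound on unit-time local ratios rather than on the global overlap.
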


\begin{proof} We will show that there exists a $K>0$ such that \begin{align}
P_\Z\left[\exp\left(\theta\left(\log \Z_{\sqrt{2}}(T)-P_\Z\left[\log \Z_{\sqrt{2}}(T)\right]\right)\right)\right]\leq \exp\left(\frac{T\theta^2K}{1-|\theta|}\right).\label{thetaz}
\end{align}
for $|\theta|\in(0,1)$.

 For fixed $T\in\mathbb{N}$, we define $\sigma$-field\begin{align*}
&\mathcal{F}_i(T)=\sigma\left[\W(t,x):0\leq t\leq i,x\in\mathbb{R}\right]\\
&\tilde{\mathcal{F}}_i(T)=\sigma\left[\W(t,x): t\not\in \left[i-1,i\right],x\in\mathbb{R}\right].
\end{align*}
Then, we write \begin{align*}
\log \Z_{\sqrt{2}}(T)-P_\Z\left[\log \Z_{\sqrt{2}}(T)\right]=\sum_{i=1}^{n_T}V_{i}^T,
\end{align*}
where \begin{align*}
V_i^T=P_\Z\left[\left.\log \Z_{\sqrt{2}}(T)\right|\mathcal{F}_i(T)\right]-P_\Z\left[\left.\log \Z_{\sqrt{2}}(T)\right|\mathcal{F}_{i-1}(T)\right]
\end{align*}
are martingale differences. 
Here, we introduce new random variables \begin{align*}
\hat{\Z}_{\sqrt{2}}(i,T)&=P_\Z\left[\left.\Z_{\sqrt{2}}(T)\right|\hat{\mathcal{F}}_i\right]\\
&=\int_{\mathbb{R}^2}\Z_{\sqrt{2}}\left(i-1,x\right)\rho_{1}(x,y)\Z_{\sqrt{2}}^{(i,y)}\left(T\right)dxdy.
\end{align*}
Since it is clear that \begin{align*}
P_\Z\left[\left.\log \hat\Z_{\sqrt{2}}(i,T)\right|\mathcal{F}_{i-1}(T)\right]=P_\Z\left[\left.\log \hat{\Z}_{\sqrt{2}}(i,T)\right|\mathcal{F}_{i}(T)\right],
\end{align*}
we have \begin{align*}
V_i^T=P_\Z\left[\left.\log \frac{\Z_{\sqrt{2}}(T)}{\hat{\Z}_{\sqrt{2}}(i,T)}\right|\mathcal{F}_{i}(T)\right]-P_\Z\left[\left.\log \frac{\Z_{\sqrt{2}}(T)}{\hat{\Z}_{\sqrt{2}}(i,T)}\right|\mathcal{F}_{i-1}(T)\right]
\end{align*}

Also, we consider a new probability measure on $\mathbb{R}^2$ by \begin{align*}
&\mu^{(i)}_T\left(x,y\right)dxdy\\
&=\frac{1}{\hat{\Z}_{\sqrt{2}}(i,T)}\Z_{\sqrt{2}}\left(i-1,x\right)\rho_{1}(x,y)\Z_{\sqrt{2}}^{(i,y)}\left(T\right)dxdy.
\end{align*}
Then, it is clear that \begin{align*}
\frac{\Z_{\sqrt{2}}(T)}{\hat{\Z}_{\sqrt{2}}(i,T)}=\int_{\mathbb{R}^2}\frac{\Z_{\sqrt{2}}\left(i-1,x;i,y\right)}{\rho_{1}(x,y)} \mu_T^{(i)}(x,y)dxdy,
\end{align*}
and Jensen's inequality implies from Theorem \ref{cdpprop} (ii) and (iv) that \begin{align*}
0\leq -P_\Z\left[\left.\log \frac{\Z_{\sqrt{2}}(T)}{\hat{\Z}_{\sqrt{2}}(i,T)}\right|\mathcal{F}_{i-1}(T)\right]&\leq -P_\Z\left[\log \frac{\Z_{\sqrt{2}}\left(0,0;1,0\right)}{p_{1}(0)}\right]\\
&\leq C_9,
\end{align*}
where we have used that \begin{align*}
-P_\Z\left[\log \frac{\Z_{\sqrt{2}}\left(0,0;1,0\right)}{p_{t}(0)}\right]\leq C_9
\end{align*}
(see Corollary \ref{cor}).

Thus, we have from Jensen's inequality that \begin{align*}
P_\Z\left[\left.\exp\left(V_{i}(T)\right)\right|\mathcal{F}_{i-1}(T)\right]\leq e^C P_\Z\left[\left.P_\Z\left[\left.\frac{\Z_{\sqrt{2}}(T)}{\hat{\Z}_{\sqrt{2}}{(i,T)}}\right|\tilde{\mathcal{F}}_{i}(T)\right]\right|\mathcal{F}_{i-1}(T)\right]=e^C.
\end{align*}
Also, Jensen's inequality implies that  \begin{align*}
&P_\Z\left[\left.\exp\left(-V_{i}(T)\right)\right|\mathcal{F}_{i-1}(T)\right]\leq P_\Z\left[\left.P_\Z\left[\left.\frac{\hat{\Z}_{\sqrt{2}}(i,T)}{{\Z}_{\sqrt{2}}{(T)}}\right|\tilde{\mathcal{F}}_{i}(T)\right]\right|\mathcal{F}_{i-1}(T)\right]\\
&\leq P_\Z\left[\left.P_\Z\left[\left.\int_{\mathbb{R}^2}\left(\frac{\Z_{\sqrt{2}}\left(i-1,x;i,y\right)}{\rho_{1}(x,y)}\right)^{-1}\mu^{(i)}(x,y)dxdy\right|\tilde{\mathcal{F}}_{i}(T)\right]\right|\mathcal{F}_{i-1}(T)\right]\leq C_{10},
\end{align*}
where we have used that \begin{align*}
P_\Z\left[\left(\frac{\Z_{\sqrt{2}}\left(0,x;1,y\right)}{\rho_{1}(x,y)}\right)^{-1}\right]\leq C_{10}.
\end{align*}

Thus, we have confirmed conditions in \cite[Theorem 2.1]{LiuWat} so that we have proved \ref{thetaz}.

\end{proof}

We can find that the above proof is true when we replace $\Z_{\sqrt{2}}(T)$ by $\Z_{\sqrt{2}}(T,0)$. Therefore, we have the following corollary from (\ref{thetaz}). 
\begin{cor}\label{L1con}
We have
\begin{align*}
&\lim_{T\to \infty}\frac{1}{T}P_\Z\left[\left|\log \Z_{\sqrt{2}}(T)-P_\Z\left[\Z_{\sqrt{2}}(T)\right]\right|\right]=0
\intertext{and}
&\lim_{T\to \infty}\frac{1}{T}P_\Z\left[\left|\log \Z_{\sqrt{2}}(T,0)-P_\Z\left[\Z_{\sqrt{2}}(T,0)\right]\right|\right]=0.
\end{align*}
In particular, we have  \begin{align*}
\lim_{T\to \infty}\frac{1}{T}P_\Z\left[\Z_{\sqrt{2}}(T,0)\right]=-\frac{1}{6}.
\end{align*}
\end{cor}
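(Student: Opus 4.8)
The plan is to combine the concentration estimate (\ref{thetaz}) with the scaling property of the continuum partition function (Theorem \ref{cdpprop}(iii)) and the limit theorem of Amir--Corwin--Quastel (Theorem \ref{limitcdp}). First I would extract an $L^1$ concentration bound from (\ref{thetaz}). Writing $X_T=\log\Z_{\sqrt2}(T)-P_\Z[\log\Z_{\sqrt2}(T)]$ and applying (\ref{thetaz}) with both $\theta$ and $-\theta$, the elementary bound $\cosh u\ge 1+\tfrac12u^2$ gives, for every $\theta\in(0,1)$,
\begin{align*}
2+\theta^2 P_\Z\left[X_T^2\right]\le P_\Z\left[e^{\theta X_T}+e^{-\theta X_T}\right]\le 2\exp\left(\frac{T\theta^2 K}{1-\theta}\right).
\end{align*}
Since (\ref{thetaz}) is only available for $|\theta|<1$, a fixed $\theta$ would leave a factor growing exponentially in $T$; instead I would take $\theta=T^{-1/2}$, which keeps the right-hand side bounded and yields $P_\Z[X_T^2]=O(T)$. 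Cauchy--Schwarz then gives $P_\Z[|X_T|]\le\sqrt{P_\Z[X_T^2]}=O(\sqrt T)$, so $\tfrac1T P_\Z[|X_T|]\to0$. By the remark preceding the corollary the same estimate holds with $\Z_{\sqrt2}(T,0)$ in place of $\Z_{\sqrt2}(T)$, so the identical computation produces both displayed $L^1$ limits.

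Next I would identify the constant by scaling. Applying Theorem \ref{cdpprop}(iii) with $\beta=1$ and $r=2$ gives $\Z_1(0,0;4T,0)\stackrel{d}{=}\tfrac12\Z_{\sqrt2}(0,0;T,0)$, that is $\Z_{\sqrt2}(T,0)\stackrel{d}{=}2\,\Z_1(4T,0)$. Hence
\begin{align*}
\frac1T\log\Z_{\sqrt2}(T,0)\stackrel{d}{=}\frac{\log2}{T}+4\cdot\frac{1}{4T}\log\Z_1(4T,0),
\end{align*}
and Theorem \ref{limitcdp} forces the right-hand side to converge in probability to $4\cdot\left(-\tfrac1{24}\right)=-\tfrac16$; this is precisely the factor $(\sqrt2)^4$ coming from the time rescaling $r^2=4$ anticipated in the introduction.

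Finally I would assemble the two ingredients. Set $Y_T=\tfrac1T\log\Z_{\sqrt2}(T,0)$ and $m_T=P_\Z[Y_T]$. The $L^1$ bound gives $Y_T-m_T\to0$ in probability, while the scaling step gives $Y_T\to-\tfrac16$ in probability; subtracting, the deterministic sequence $m_T+\tfrac16$ converges to $0$ in probability and hence as a numerical sequence, so $m_T\to-\tfrac16$, which is the asserted limit of $\tfrac1T P_\Z[\log\Z_{\sqrt2}(T,0)]$. I expect the only genuinely delicate point to be the first step: one must use (\ref{thetaz}) in the small-$\theta$ regime to convert the Gaussian-type tail at scale $\sqrt T$ into a true $O(T)$ variance bound, rather than a naive fixed-$\theta$ Chernoff estimate; the scaling computation and the final assembly are essentially bookkeeping.
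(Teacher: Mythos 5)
Your proposal is correct and follows essentially the route the paper intends (the paper leaves it implicit, stating only that Corollary \ref{L1con} follows from (\ref{thetaz}) and its point-to-point analogue): extract $o(T)$-scale $L^1$ concentration from the exponential bound (\ref{thetaz}), then identify the constant by combining Theorem \ref{limitcdp} with the scaling property of Theorem \ref{cdpprop}(iii), which produces exactly the factor $(\sqrt{2})^4/24=1/6$. Your $\cosh$ trick with $\theta=T^{-1/2}$ is a clean, explicit way to get the $O(T)$ variance bound that the paper's citation of Liu--Watbled accomplishes, and the final passage from convergence in probability plus $L^1$ concentration to convergence of the deterministic means is exactly the intended bookkeeping.
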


\begin{proof}[Proof of Lemma \ref{ttheta}] 
The proof is similar to the proofs of Lemma \ref{maxsup} and Lemma \ref{partition1}.

Also, we will often use the equations in Appendix to compute integrals of functions of heat kernels.

We write \begin{align*}
&\Z_{\sqrt{2}}^x(T)=\int_{\mathbb{R}}\Z_{\sqrt{2}}^x(1,w)\Z_{\sqrt{2}}^{(1,w)}(T)dw\\
&=\int_{A(T)}\Z_{\sqrt{2}}^x(1,w)\Z_{\sqrt{2}}^{(1,w)}(T)dw\\
&+\int_{A(T)^c}\Z_{\sqrt{2}}^x(1,w)\Z_{\sqrt{2}}^{(1,w)}(T)dw\\
&=:I_1(T,x)+I_2(T,x),
\end{align*}
where $A(T)=[-a(T),a(T)]$ is a segment with length of order $T^3$. Hereafter, we will look at $I_1(T,x)$ and $I_2(T,x)$.

We will show in the lemmas below that \begin{align*}
&\varlimsup_{\theta\to 0}\varlimsup_{T\to \infty}\frac{1}{\theta T}\log P_\Z\left[\sup_{x\in [-1,1]}I_1(T,x)^\theta\right]\leq \lim_{T\to \infty}\frac{1}{T}P_\Z\left[\log \Z_{\sqrt{2}}(T)\right]
\intertext{and}
&\varlimsup_{T\to \infty}\frac{1}{ T}\log P_\Z\left[\sup_{x\in [-1,1]}I_2(T,x)^\theta\right]= -\infty.
\end{align*}
Thus, we complete the proof.
\end{proof}

\begin{lem}\label{zc}
We have that 
\begin{align*}
\varlimsup_{\theta\to 0}\varlimsup_{T\to \infty}\frac{1}{\theta T}\log P_\Z\left[\sup_{x\in [-1,1]}I_1(T,x)^\theta\right]\leq \lim_{T\to \infty}\frac{1}{T}P_\Z\left[\log \Z_{\sqrt{2}}(T)\right]
\end{align*}
\end{lem}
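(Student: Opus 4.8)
The plan is to control $I_1(T,x)^\theta$ by the spatially-fixed partition function $\Z_{\sqrt 2}(T)$ up to an error that is negligible after taking $\frac{1}{\theta T}\log(\cdot)$ and sending $T\to\infty$, then $\theta\to 0$. The starting point is the factorization already written in the proof of Lemma \ref{ttheta}, namely
\begin{align*}
I_1(T,x)=\int_{A(T)}\Z_{\sqrt 2}^x(1,w)\,\Z_{\sqrt 2}^{(1,w)}(T)\,dw,
\end{align*}
which uses the Chapman--Kolmogorov property (Theorem \ref{cdpprop}(vii)). The factor $\Z_{\sqrt 2}^{(1,w)}(T)$ is, by stationarity (Theorem \ref{cdpprop}(ii)), distributed as $\Z_{\sqrt 2}(T-1)$ and is measurable with respect to the noise on the time interval $[1,T]$, hence independent of $\Z_{\sqrt 2}^x(1,\cdot)$, which depends only on the noise on $[0,1]$ (Theorem \ref{cdpprop}(vi)). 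This independence is what lets me decouple the short initial layer $[0,1]$ (which carries the spatial dependence and the supremum over $x\in[-1,1]$) from the long bulk $[1,T]$ (which carries the free-energy rate $-\tfrac16$).

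First I would bound the supremum by pulling the $x$-dependent weight out. Using $\sup_{x}\int_{A(T)}\Z_{\sqrt 2}^x(1,w)\,\Z_{\sqrt 2}^{(1,w)}(T)\,dw\le \big(\sup_{x\in[-1,1],w\in A(T)}\Z_{\sqrt 2}^x(1,w)\big)\int_{A(T)}\Z_{\sqrt 2}^{(1,w)}(T)\,dw$ is too lossy, so instead I would apply the elementary inequality $(\sum a_k)^\theta\le\sum a_k^\theta$ in its continuum/Jensen form together with the independence to write, for $\theta\in(0,1)$,
\begin{align*}
P_\Z\left[\sup_{x\in[-1,1]}I_1(T,x)^\theta\right]\le P_\Z\left[\Big(\sup_{x\in[-1,1]}\!\int_{A(T)}\!\Z_{\sqrt 2}^x(1,w)\,\Z_{\sqrt 2}^{(1,w)}(T)\,dw\Big)^{\!\theta}\right].
\end{align*}
The key move is to separate the two noise layers by conditioning on $\mathcal F_1(T)$: conditionally on the bulk field $\{\Z_{\sqrt 2}^{(1,w)}(T)\}_w$, the initial field $\Z_{\sqrt 2}^x(1,\cdot)$ is a controllable random weight whose $x$-supremum and $L^p$ behavior I have already understood through the same Garsia--Rodemich--Rumsey / hypercontractivity machinery used for Lemma \ref{maxsup}. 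Concretely I expect to reduce matters to showing $P_\Z\big[\big(\int_{A(T)}\rho_1(x,w)\,\Z_{\sqrt 2}^{(1,w)}(T)\,dw\big)^\theta\big]$ is comparable, after $\frac1{\theta T}\log$, to $P_\Z[\Z_{\sqrt 2}(T-1)^\theta]$, and then invoke Lemma \ref{wttheta} which already gives $\varlimsup_{\theta\to0}\varlimsup_{T\to\infty}\frac{1}{\theta T}\log P_\Z[\Z_{\sqrt 2}(T)^\theta]\le F_\Z(\sqrt 2)$. The restriction of the spatial integral to $A(T)$, of length of order $T^3$, is precisely what keeps the Gaussian kernel $\rho_1(x,w)$ and the supremum over $x\in[-1,1]$ from contributing more than a subexponential (in $T$) correction, since the tails $|w|\gtrsim T^3$ are superexponentially small.

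The main obstacle will be handling the supremum over $x\in[-1,1]$ \emph{inside} the integral without losing a factor that survives $\frac{1}{\theta T}\log$ as $\theta\to 0$. The danger is that a naive bound produces a multiplicative constant $C_\theta$ with $\frac{1}{\theta T}\log C_\theta$ blowing up; to avoid this I would establish an $x$-uniform modulus-of-continuity estimate for $w\mapsto \Z_{\sqrt 2}^x(1,w)$ via Lemma \ref{Gar} (as in the proof of Lemma \ref{maxsup}), show that the supremum over the compact set $[-1,1]$ costs only a polynomial-in-$T$ factor, and verify that this factor is killed by the $\frac1T$ in front after the limit. A secondary technical point is the interchange of the supremum with the conditional expectation, which I would justify by the joint continuity in Theorem \ref{cdpprop} together with the $L^p$-boundedness of the relevant fields and of their reciprocals (Corollary \ref{cor}); once these are in place, combining the layered estimate with Lemma \ref{wttheta} and letting $T\to\infty$ then $\theta\to0$ yields the claimed bound $\varlimsup_{\theta\to0}\varlimsup_{T\to\infty}\frac{1}{\theta T}\log P_\Z[\sup_{x\in[-1,1]}I_1(T,x)^\theta]\le \lim_{T\to\infty}\frac1T P_\Z[\log\Z_{\sqrt 2}(T)]$.
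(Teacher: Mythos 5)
Your scaffolding matches the paper's proof of this lemma: the same Chapman--Kolmogorov factorization of $I_1(T,x)$, the same use of the independence of the noise on $[0,1]$ and on $[1,T]$ to split off the initial layer, a Garsia--Rodemich--Rumsey/hypercontractivity argument so that the supremum over $x\in[-1,1]$ (and over $w\in A(T)$, which the paper handles by the uniform-in-$k$ block bound (\ref{boundedat}) summed over the $\asymp T^3$ unit blocks of $A(T)$) costs only a polynomial factor in $T$, and a final appeal to Lemma \ref{wttheta}. But the step you phrase as ``I expect to reduce matters to showing $P_\Z\bigl[\bigl(\int_{A(T)}\rho_1(x,w)\Z_{\sqrt{2}}^{(1,w)}(T)\,dw\bigr)^\theta\bigr]$ is comparable, after $\frac{1}{\theta T}\log$, to $P_\Z[\Z_{\sqrt{2}}(T-1)^\theta]$'' is precisely the crux of the lemma, and your proposal contains no mechanism for it; the two obvious attempts fail. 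Dropping the Gaussian weight is impossible: $w\mapsto \Z_{\sqrt{2}}^{(1,w)}(T)$ is a stationary positive field of mean one, so $\int_{\mathbb{R}}\Z_{\sqrt{2}}^{(1,w)}(T)\,dw=\infty$. Keeping the weight but using subadditivity of $x\mapsto x^\theta$ over unit blocks followed by Jensen, $P_\Z[a_k^\theta]\le P_\Z[a_k]^\theta$, gives only an $O(1)$ bound: it replaces the random variable by its mean and thereby destroys exactly the exponential decay $e^{\theta T F_\Z(\sqrt{2})(1+o(1))}$ that the lemma must capture (recall $F_\Z(\sqrt{2})<0$, so any $T$-uniform constant bound is useless here).

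What the paper actually does at this point is a change of measure with a \emph{random} density: setting $\nu^{(1,L)}(u)\propto \rho_L(u-1)\Z_{\sqrt{2}}^{(1,u)}(T)$, one has the identity $\int_{\mathbb{R}}\rho_L(u-1)\Z_{\sqrt{2}}^{(1,u)}(T)\,du=\Z_{\sqrt{2}}^1(T+L)\Big/\int_{\mathbb{R}}\frac{\Z_{\sqrt{2}}^1(L,u)}{\rho_L(u-1)}\nu^{(1,L)}(u)\,du$, i.e.\ the deterministic kernel $\rho_L$ is traded for the random propagator $\Z_{\sqrt{2}}^1(L,\cdot)$ at the cost of a correction factor; H\"older with exponents $\frac{1}{1-\theta}$ and $\frac{1}{\theta}$ then bounds the $\theta$-moment by $P_\Z\bigl[\bigl(\Z_{\sqrt{2}}^1(T+L)\bigr)^{\frac{\theta}{1-\theta}}\bigr]^{1-\theta}$ times the $\theta$-power of the negative moment $P_\Z\bigl[\rho_L(u-1)/\Z_{\sqrt{2}}^1(L,u)\bigr]$, which is finite by Corollary \ref{cor}. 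This is where Corollary \ref{cor} genuinely enters --- not, as in your sketch, merely to justify exchanging a supremum with a conditional expectation --- and Lemma \ref{wttheta} is then applied with exponent $\frac{\theta}{1-\theta}\to 0$. A related secondary flaw: your normalizing weight $\rho_1(x,w)$ depends on $x$, whereas to pull the supremum over $x\in[-1,1]$ out of the $w$-integral you need a single $x$-free dominating weight; the paper uses $\rho_L(w-1)+\rho_L(w+1)$ with $L$ large, chosen so that the normalized initial field has uniformly bounded expected supremum on unit blocks and so that the negative-moment bound above holds. Supplying the change-of-measure/H\"older step would close the gap; as written, the proposal does not produce the exponential rate on the right-hand side.
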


\begin{lem}\label{zt}
We have that for any $\theta \in (0,1)$\begin{align*}
\varlimsup_{T\to \infty}\frac{1}{ T}\log P_\Z\left[\sup_{x\in [-1,1]}I_2(T,x)^\theta\right]= -\infty.
\end{align*}
\end{lem}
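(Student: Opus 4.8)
The plan is to exploit the independence of the two factors in $I_2(T,x)=\int_{A(T)^c}\Z^x_{\sqrt{2}}(1,w)\,\Z^{(1,w)}_{\sqrt{2}}(T)\,dw$, which live on the disjoint time intervals $[0,1]$ and $[1,T]$ (Theorem \ref{cdpprop} (vi)), together with the fact that the endpoint $w$ is forced to lie outside a segment of length of order $T^3$. The Gaussian spatial tail of the heat kernel then produces a factor of order $\exp(-cT^6)$, which is far smaller than any $\exp(-CT)$; the whole point is that this factor dominates every fluctuation term, so only very crude control of the fluctuations is needed.

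First I would estimate, for a fixed $x\in[-1,1]$ and a fixed $p\ge 1$, the $p$-th moment of $I_2(T,x)$. By Minkowski's integral inequality and independence of the two factors,
\[
P_\Z\left[I_2(T,x)^p\right]^{1/p}\le\int_{A(T)^c}P_\Z\left[\Z^x_{\sqrt{2}}(1,w)^p\right]^{1/p}P_\Z\left[\Z^{(1,w)}_{\sqrt{2}}(T)^p\right]^{1/p}dw .
\]
By Theorem \ref{cdpprop} (v), $\Z^x_{\sqrt{2}}(1,w)/\rho_1(x-w)$ has a law not depending on $x,w$, so $P_\Z[\Z^x_{\sqrt{2}}(1,w)^p]=c_p\,\rho_1(x-w)^p$ with $c_p<\infty$; by stationarity (Theorem \ref{cdpprop} (ii)), $P_\Z[\Z^{(1,w)}_{\sqrt{2}}(T)^p]=P_\Z[\Z_{\sqrt{2}}(T-1)^p]=:M_p(T)$, which grows at most like $\exp(C_pT)$ by the standard moment bounds for the stochastic heat equation. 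Since $x\in[-1,1]$ and $A(T)^c\subset\{|w|>a(T)\}$ with $a(T)\asymp T^3$, we get $\int_{A(T)^c}\rho_1(x-w)\,dw\le\exp(-cT^6)$, and hence, for each fixed $p$, $P_\Z[I_2(T,x)^p]\le\exp(-cT^6/2)$ for all large $T$.

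To pass to the supremum over $x\in[-1,1]$ I would reuse the Garsia--Rodemich--Rumsey machinery of Lemma \ref{maxsup}. Applying Lemma \ref{Gar} to $x\mapsto I_2(T,x)^\theta$ and using $|a^\theta-b^\theta|\le|a-b|^\theta$, one bounds $\sup_{x\in[-1,1]}I_2(T,x)^\theta$ by $I_2(T,0)^\theta$ plus a constant times $\left(\int_{-1}^1\int_{-1}^1|I_2(T,t)-I_2(T,s)|^{p\theta}\,|t-s|^{-pq}\,ds\,dt\right)^{1/p}$. Taking $P_\Z$ and moving it inside by Jensen (using $1/p\le 1$ and then $p\theta\le p$), the input needed is a spatial-increment estimate of the form $P_\Z[|I_2(T,t)-I_2(T,s)|^p]\le C_{p,T}\,|t-s|^{p/2}\exp(-cT^6)$. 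This is obtained exactly as in Lemma \ref{maxsup}: in the Wiener-chaos expansion of $\Z^t_{\sqrt{2}}(1,w)-\Z^s_{\sqrt{2}}(1,w)$ only the first heat-kernel factor (and the mean $\rho_1(w-t)$) depends on the starting point, so the smoothness of the Gaussian kernel (the continuum analogue of (\ref{LLT})) yields an $L^2$-increment of order $|t-s|$ carrying a Gaussian factor in $w$; hypercontractivity upgrades this to the $L^p$ bound with modulus $|t-s|^{p/2}$, and Minkowski plus independence reinsert the factor $M_p(T)^{1/p}\int_{A(T)^c}(\cdots)\,dw\le\exp(-cT^6)$. Choosing $p,q$ with $pq>2$ and $p\theta/2-pq>-1$ (for instance $q$ small and $p$ a large multiple of $1/\theta$) makes the double integral finite and gives $P_\Z[\sup_{x\in[-1,1]}I_2(T,x)^\theta]\le C\exp(-c'T^6)$ for large $T$, with $c'=c'(\theta,p)>0$.

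Taking logarithms then yields $\frac{1}{T}\log P_\Z[\sup_{x\in[-1,1]}I_2(T,x)^\theta]\le -c'T^5+o(1)\to-\infty$, which is the claim. The main obstacle is the increment estimate with \emph{retained} Gaussian decay: one must run the chaos/hypercontractivity computation so that the Hölder factor $|t-s|^{p/2}$ and the super-exponential spatial factor $\exp(-cT^6)$ emerge simultaneously from the same heat-kernel bookkeeping. Everything else is pure slack, since the mean of $I_2$ already decays like $\exp(-cT^6)$ while we only need to beat $\exp(-CT)$.
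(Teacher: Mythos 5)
Your proposal is correct, and while its outer skeleton coincides with the paper's (bound the $x=0$ term via Jensen and $P_\Z[I_2(T,0)]=\int_{A(T)^c}\rho_1(0,w)\,dw\leq e^{-cT^6}$; control $\sup_{x}$ via the Garsia--Rodemich--Rumsey bound (\ref{Garpol}) applied to $I_2(T,\cdot)^\theta$ with $|a^\theta-b^\theta|\leq |a-b|^\theta$; upgrade $L^2$ to $L^p$ by hypercontractivity; choose $p\asymp 1/\theta$ and $q$ small), the core estimate is obtained by a genuinely different route. The paper expands $I_2(T,x)$ itself in Wiener chaos over the whole interval $[0,T]$, with kernels $\rho^{(k,T)}$ split according to how the times $t_1<\cdots<t_k$ straddle time $1$, and then carries out a level-by-level computation bounding every $P_\Z[|J^{(k)}(T,x)-J^{(k)}(T,y)|^2]$ by $M^{(1)}(T)+M^{(2)}(T)$ times a factor $T^{(k-1)/2}/(2^k\Gamma(\frac{k-2}{2}))$, using the appendix heat-kernel identities (\ref{hconv})--(\ref{hest}); the super-exponential decay enters through $M^{(i)}(T)\leq C|x-y|e^{-C'a(T)^2}$. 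You instead factorize at time $1$ using the Markov/independence structure (Theorem \ref{cdpprop} (vi)--(vii)) and Minkowski's integral inequality, so that $\|I_2(T,t)-I_2(T,s)\|_p\leq \int_{A(T)^c}\|\Z^t_{\sqrt{2}}(1,w)-\Z^s_{\sqrt{2}}(1,w)\|_p\,\|\Z^{(1,w)}_{\sqrt{2}}(T)\|_p\,dw$, confining the chaos computation to the time-$1$ field and dumping all $T$-dependence into the crude bound $M_p(T)\leq e^{C_pT}$, which is annihilated by the Gaussian factor $e^{-cT^6}$ coming from $a(T)\asymp T^3$. The trade-off: the paper's computation is self-contained (no input on moment growth of the point-to-line mass) and its Gamma-function bookkeeping is reused verbatim in Lemma \ref{zc}; your argument is shorter and more robust, since any $e^{o(T^6)}$ moment growth would do, and your required time-$1$ increment bound with retained Gaussian decay in $w$ is exactly the $K^{(k)}$ estimate the paper proves inside Lemma \ref{zc}, so no new chaos work is actually needed. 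The only points you should make explicit are the finiteness of $c_p=P_\Z[(\Z_{\sqrt{2}}(0,0;1,0)/\rho_1(0))^p]$ and the bound $M_p(T)\leq e^{C_pT}$ (both standard, e.g.\ via hypercontractivity applied to the chaos expansion of the total mass); with those stated, your proof is complete.
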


\begin{proof}[Proof of Lemma \ref{zt}]
It is easy to see  from Lemma \ref{cdpprop} (i) that \begin{align*}
\varlimsup_{T\to \infty}\frac{1}{T}\log P_\Z\left[ I_2(T,0)^\theta\right]\leq \varlimsup_{T\to \infty}\frac{\theta}{T}\log \int_{A(T)^c}\rho_1(0,w)dw=-\infty.
\end{align*}

Thus, it is enough to show that \begin{align*}
\varlimsup_{T\to \infty}\frac{1}{T}\log P_\Z\left[ \sup_{x,y\in [-1,1]}\left|I_2(T,x)^\theta-I_2(T,y)^\theta\right|\right]=-\infty.
\end{align*}

Applying (\ref{Garpol}) to the continuous function $I_2(T,y)^\theta$ with $d=1$, $x=0$, \begin{align*}
&P_\Z\left[\sup_{y\in[-1,1]}\left|I_2(T,y)^\theta-I_2(T,0)^\theta\right|\right]\\
&\leq C_{p,q}\left(\int_{-1}^1\int_{-1}^1\frac{P_\Z\left[\left|I_2(T,s)-I_2(T,t)\right|^{\theta p}\right]}{|t-s|^{pq}}dsdt\right)^{\frac{1}{p}}.
\end{align*}
for some $p>1$, $q>0$ with $pq>2$.

Thus, we will show that for $\theta \in (0,1)$, there exist $p\geq 1$ and  $q>0$ with $pq>2$ such that 
\begin{align}
\varlimsup_{T\to \infty}\frac{1}{T}\log \left(\int_{-1}^1\int_{-1}^1\frac{P_\Z\left[\left|I_2(T,s)-I_2(T,t)\right|^{\theta p}\right]}{|t-s|^{pq}}dsdt\right)^{\frac{1}{p}}=-\infty.\label{i2t}
\end{align}

We remark that $I_2(T,x)$ have the following Wiener chaos representation:
\begin{align*}
I_2(T,x)&=\int_{A(T)^c}\rho_1(w-x)dw\\
&+\sum_{k\geq 1}2^{\frac{k}{2}}\int_{\Delta_{k}(T)}\int_{\mathbb{R}^k}\rho^{(k,T)}(x;{\bf t},{\bf x})\W(dt_1,dx_1)\cdots \W(dt_k,dx_k)\\
&=\sum_{k\geq 0} 2^{\frac{k}{2}}J^{(k)}(T,x),
\end{align*}
where \begin{align*}
&\rho^{(1,T)}(x;t,x_1)\\
&=\begin{cases}
\dis \dis \int_{A(T)^c}\rho_1(x,w)\rho_{t-1}(w,x_1)dw,\ \ \ \ &\text{for }1\leq t\leq T\\
\dis \rho_{t}(x,x_1)\int_{A(T)^c}\rho_{1-t}(x_{1},w)dw,\ \ \  \ &\text{for }0<t\leq  1,
\end{cases}
\end{align*}
and
\begin{align*}
&\rho^{(k,T)}(x;{\bf t},{\bf x})\\
&=\begin{cases}
\dis \int_{A(T)^c}\rho_1(x,w)\rho_{t_1-1}(w,x_1)\prod_{i=2}^{k}\rho_{t_i-t_{i-1}}(x_{i-1},x_i)dw,\ \ \\
\hspace{17em}\text{for }1\leq t_1<\cdots<t_k\leq T\\
\dis \rho_{t_1}(x,x_1)\prod_{\begin{smallmatrix}i=2,\\i\not=\ell+1\end{smallmatrix}}^{k}\rho_{t_i-t_{i-1}}(x_{i-1},x_i)\int_{A(T)^c}\rho_{1-t_{\ell}}(x_\ell,w)\rho_{t_{\ell+1}-1}(w,x_{\ell+1})dw,\ \ \\
\hspace{12em} \text{for }0<t_1<\cdots<t_\ell\leq  1<t_{\ell+1}<\cdots<t_k\leq T.
\end{cases}
\end{align*}

We will estimate \begin{align*}
P_\Z\left[|J^{(k)}(T,x)-J^{(k)}(T,y)|^2\right]
\end{align*}
for $k\geq 0$.
It is easy to see that \begin{align*}
|J^{(0)}(T,x)-J^{(0)}(T,y)|&\leq \int_{a(T)}^\infty \left|\rho_1(x-w)-\rho_1(y-w)\right|dw\\
&+\int^{-a(T)}_{-\infty} \left|\rho_1(x-w)-\rho_1(y-w)\right|dw\\
&\leq |x-y|\int_{a(T)-1}^\infty \frac{4}{\sqrt{2\pi}}w\exp\left(-\frac{w^2}{2}\right)dw\\
&=\frac{4}{\sqrt{2\pi}}|x-y|\exp\left(-\frac{(a(T)-1)^2}{2}\right).
\end{align*}
Also, we have \begin{align}
&P_\Z\left[|J^{(1)}(T,x)-J^{(1)}(T,y)|^2\right]\notag\\
&=\int_0^T\int_{\mathbb{R}}\left(\rho^{(1,T)}(x;t,x_1)-\rho^{(1,T)}(y;t,x_1)\right)^2dtdx_1\notag\\
&\stackrel{(\ref{hconv})}{=}\int_1^Tdt\iint_{(A(T)^c)^2}(\rho_1(x,w)-\rho_1(y,w))(\rho_1(x,w')-\rho_1(y,w'))\rho_{2(t-1)}(w,w')dwdw'\notag\\
&+\int_0^1dt\int_\mathbb{R}dx_1(\rho_t(x,x_1)-\rho_t(y,x_1))^2\iint_{(A(T)^c)^2}\rho_{1-t}(x_1,w)\rho_{1-t}(x_1,w')dwdw'\notag\\
&=: M^{(1)}(T)+M^{(2)}(T)\notag\\
&\stackrel{\textrm{H\"older}}{\leq} \int_1^T\left(\int_{A(T)^c}dw(\rho_1(x,w)-\rho_1(y,w))^2\int_{\mathbb{R}}dw'\rho_{2(t-1)}(w,w')^2\right)\notag\\
&+\int_0^1dt\int_\mathbb{R}dx_1\int_{A(T)^c}dw(\rho_t(x,x_1)-\rho_t(y,x_1))^2\rho_{1-t}(x_1,w)^2\notag\\
&\stackrel{\text{H\"older}, (\ref{hprod}), (\ref{hconv})}{\leq} \int_{1}^{T}dt\frac{1}{2\sqrt{2\pi(t-1) }}\left(\int_{\mathbb{R}}dw(\rho_1(x,w)-\rho_1(y,w))^2\right)^{1/2}\notag\\
&\hspace{6em}\times \left(\int_{A(T)^c}2\rho_1(x,w)^2dw+\int_{A(T)^c}2\rho_1(y,w)^2dw\right)^{1/2}\notag\\
&+\int_0^1\frac{dt}{2\sqrt{\pi (1-t)}}\int_{A(T)^c}dw\left(\rho_{2t}(0)\rho_{\frac{1}{2}}(x,w)+\rho_{2t}(0)\rho_{\frac{1}{2}}(y,w)-2\rho_{2t}(x,y)\rho_{\frac{1}{2}}\left(\frac{x+y}{2},w\right)\right)\notag\\
&\hspace{-1em}\stackrel{(\ref{hsqare}),(\ref{hconv})}{\leq} C|x-y|\exp\left(-C'a^2(T)\right)\notag\\
&+\int_0^1\frac{dt}{2\sqrt{\pi (1-t)}}\int_{A(T)^c}dw(\rho_{2t}(0)-\rho_{2t}(x,y))(\rho_{\frac{1}{2}}(x,w)+\rho_{\frac{1}{2}}(y,w))\notag\\
&+\int_0^1\frac{dt}{2\sqrt{\pi (1-t)}}\int_{A(T)^c}dw\rho_{2t}(x,y)\left(\rho_{\frac{1}{2}}(x,w)+\rho_{\frac{1}{2}}(y,w)-2\rho_{\frac{1}{2}}\left(\frac{x+y}{2},w\right)\right)\notag\\
&\stackrel{(\ref{hest})}{\leq} C|x-y|\exp\left(-C'a^2(T)\right).\notag
\end{align}

Also, we have
\begin{align*}
&P_\Z\left[|J^{(2)}(T,x)-J^{(2)}(T,y)|^2\right]=\int_{\Delta_2(T)}\int_{\mathbb{R}^2}\left(\rho_1^{(2,T)}(x;{\bf t},{\bf x})-\rho_1^{(2,T)}(y;{\bf t},{\bf x})\right)^2d{\bf t}d{\bf x}\\
&= \int_{D_{2}(1,T)}\int_{\mathbb{R}^2}\left(\int_{A(T)^c}(\rho_1(x,w)-\rho_1(y,w))\rho_{t_1-1}(w,x_1)dw\right)^2\rho_{t_2-t_{1}}(x_{1},x_{2})^2d{\bf t}d{\bf x}\\
&+\int_{0<t_1<t_2\leq  1}\int_{\mathbb{R}^2}(\rho_{t_1}(x,x_1)-\rho_{t_1}(y,x_1))^2\rho_{t_2-t_{1}}(x_{1},x_{2})^2	\left(\int_{A(T)^c}\rho_{1-t_{2}}(x_{2},w)dw\right)^2d{\bf t}d{\bf x}\\
&+\int_{0<t_1\leq   1<t_{2}\leq T}\int_{\mathbb{R}^2}(\rho_{t_1}(x,x_1)-\rho_{t_1}(y,x_1))^2 \left(\int_{A(T)^c}\rho_{1-t_{1}}(x_{1},w)\rho_{t_{2}-1}(w,x_{2})dw\right)^2d{\bf t}d{\bf x}\\
&\leq \int_{1}^Tdt_1\frac{\sqrt{T-t_1}}{\sqrt{\pi}}\int_{A(T)^c}\int_{A(T)^c}\left(\rho_1(x,w)-\rho_1(y,w)\right)(\rho_1(x,w')-\rho_1(y,w'))\rho_{2(t_1-1)}(w,w')dwdw'\\
&+\int_{0<t_1<t_2\leq 1}\int_{\mathbb{R}}(\rho_{t_1}(x,x_1)-\rho_{t_1}(y,x_1))^2\frac{2\sqrt{\pi (1-t_1)}}{2\sqrt{\pi(t_2-t_1)}2\sqrt{\pi(1-t_2)}}\left(\int_{A(T)^c}\rho_{1-t_1}(x_1,w)dw\right)^2d{\bf t}dx_1\\
&+\int_{0<t_1\leq   1<t_{2}\leq T}\int_{\mathbb{R}^2}(\rho_{t_1}(x,x_1)-\rho_{t_1}(y,x_1))^2 \\
&\hspace{4em}\times \left(\iint_{(A(T)^c)^2}\rho_{1-t_{1}}(x_{1},w)\rho_{1-t_1}(x_1,w')\rho_{2(t_2-1)}(w,w')dwdw'\right)d{\bf t}d{\bf x}\\
&\leq \frac{\sqrt{T-1}}{\sqrt{\pi}}M^{(1)}(T)+\frac{\sqrt{\pi}}{2}M^{(2)}(T)+\frac{\sqrt{T-1}}{\sqrt{\pi}}M^{(2)}(T).
\end{align*}

For $k\geq 3$,\begin{align*}
&P_\Z\left[|J^{(k)}(T,x)-J^{(k)}(T,y)|^2\right]=\int_{\Delta_k(T)}\int_{\mathbb{R}^k}\left(\rho_1^{(k,T)}(x;{\bf t},{\bf x})-\rho_1^{(k,T)}(y;{\bf t},{\bf x})\right)^2d{\bf t}d{\bf x}\\
&= \int_{D_{k}(1,T)}\int_{\mathbb{R}^k}\left(\int_{A(T)^c}(\rho_1(x,w)-\rho_1(y,w))\rho_{t_1-1}(w,x_1)dw\right)^2\prod_{i=2}^{k}\rho_{t_i-t_{i-1}}(x_{i-1},x_{i})^2d{\bf t}d{\bf x}\\
&+\int_{0<t_1\leq  1<t_{2}<\cdots<t_k\leq T}\int_{\mathbb{R}^k}(\rho_{t_1}(x,x_1)-\rho_{t_1}(y,x_1))^2\prod_{i=3}^{k}\rho_{t_i-t_{i-1}}(x_{i-1},x_{i})^2	\\
&\hspace{10em}\times \left(\int_{A(T)^c}\rho_{1-t_{1}}(x_{1},w)\rho_{t_{2}-1}(w,x_{2})dw\right)^2d{\bf t}d{\bf x}\\
&+\sum_{\ell=2}^{k-1}\int_{0<t_1<\cdots<t_\ell\leq  1<t_{\ell+1}<\cdots<t_k\leq T}\int_{\mathbb{R}^k}(\rho_{t_1}(x,x_1)-\rho_{t_1}(y,x_1))^2\prod_{\begin{smallmatrix}i=2,\\i\not=\ell+1\end{smallmatrix}}^{k}\rho_{t_i-t_{i-1}}(x_{i-1},x_{i})^2	 	\\
&\hspace{10em}\times \left(\int_{A(T)^c}\rho_{1-t_{\ell}}(x_{\ell},w)\rho_{t_{\ell+1}-1}(w,x_{\ell+1})dw\right)^2d{\bf t}d{\bf x}\\
&\stackrel{(\ref{hconv}),(\ref{hfull}),(\ref{hfull2})}{=}\frac{1}{2^{k-1}\Gamma(\frac{k+1}{2})}\int^{T}_1{(T-t_1)^{\frac{k-1}{2}}}\\
&\hspace{5em}\times \iint_{(A(T)^c)^2}(\rho_1(x,w)-\rho_1(y,w))(\rho_1(x,w')-\rho_1(y,w'))\rho_{2(t_1-1)}(w,w')dwdw'dt_1\\
&+\int_0^1dt_1\int_\mathbb{R}dx_1(\rho_{t_1}(x,x_1)-\rho_{t_1}(y,x_1))^2\\
&\hspace{5em}\times \int_1^Tdt_2\frac{(T-t_2)^{\frac{k-2}{2}}}{2^{k-2}\Gamma\left(\frac{k}{2}\right)}\left(\int_{A(T)^c}\int_{A(T)^c}\rho_{1-t_{1}}(x_{1},w)\rho_{1-t_1}(x_1,w')\rho_{2t_{2}-2}(w,w')dwdw'\right)\\
&+\sum_{\ell=2}^{k-1}\frac{1}{2^{k-2}\Gamma\left(\frac{k-\ell+1}{2}\right)\Gamma\left(\frac{\ell-1}{2}\right)}\int_{0}^1dt_1\int_{t_1}^1dt_\ell (t_\ell-t_1)^{\frac{\ell-3}{2}}\int_{\mathbb{R}}dx_1(\rho_{t_1}(x,x_1)-\rho_{t_1}(y,x_1))^2\rho_{\frac{t_\ell-t_1}{2}}(x_1,x_\ell)\\
&\hspace{4em}\times \int_1^Tdt_{\ell+1}\iint_{(A(T)^c)^2}{(T-t_{\ell+1})^{\frac{k-\ell-1}{2}}} \rho_{1-t_\ell}(w-x_\ell)\rho_{1-t_\ell}(w'-x_\ell)\rho_{2t_{\ell+1}-2}(w,w')dwdw'\\
&\leq \frac{(T-1)^{\frac{k-1}{2}}}{2^{k-1}\Gamma\left(\frac{k+1}{2}\right)}M^{(1)}(T)+\frac{(T-1)^{\frac{k-1}{2}}}{2^{k-1}\Gamma\left(\frac{k+1}{2}\right)}M^{(2)}(T)\\
&+\sum_{\ell=2}^{k-1}\frac{(T-1)^{\frac{k-\ell}{2}}}{2^{k-1}\Gamma\left(\frac{k-\ell+2}{2}\right)}\int_0^1dt_1\int_{t_1}^1dt_\ell \frac{(t_\ell-t_1)^{\frac{\ell-3}{2}}}{\Gamma\left(\frac{\ell-1}{2}\right)}\\
&\hspace{4em}\times\iint_{\mathbb{R}^2}dx_1dx_\ell(\rho_{t_1}(x,x_1)-\rho_{t_1}(y,x_1))^2\rho_{\frac{t_\ell-t_1}{2}}(x_1,x_\ell)\left(\int_{A(T)^c}\rho_{1-t_{\ell}}(x_{\ell},w)dw\right)^2\\
&\leq \frac{(T-1)^{\frac{k-1}{2}}}{2^{k-1}\Gamma\left(\frac{k+1}{2}\right)}M^{(1)}(T)+\frac{(T-1)^{\frac{k-1}{2}}}{2^{k-1}\Gamma\left(\frac{k+1}{2}\right)}M^{(2)}(T)\\
&+\sum_{\ell=2}^{k-1}\frac{(T-1)^{\frac{k-\ell}{2}}}{2^{k-1}\Gamma\left(\frac{k-\ell+2}{2}\right)}\int_0^1dt_1(1-t_1)^{\frac{1}{2}}\int_{t_1}^1dt_\ell \frac{(t_\ell-t_1)^{\frac{\ell-3}{2}}}{\sqrt{1-t_\ell}\Gamma\left(\frac{\ell-1}{2}\right)}\\
&\hspace{5em}\times \int_{\mathbb{R}}dx_1(\rho_{t_1}(x,x_1)-\rho_{t_1}(y,x_1))^2\left(\int_{A(T)^c}\rho_{1-t_{1}}(x_{1},w)dw\right)^2\\
&\leq \frac{(T-1)^{\frac{k-1}{2}}}{2^{k-1}\Gamma\left(\frac{k+1}{2}\right)}M^{(1)}(T)+\frac{(T-1)^{\frac{k-1}{2}}}{2^{k-1}\Gamma\left(\frac{k+1}{2}\right)}M^{(2)}(T)\\
&+\sum_{\ell=2}^{k-1}\frac{\sqrt{\pi}(T-1)^{\frac{k-\ell}{2}}}{2^{k-1}\Gamma\left(\frac{k-\ell+2}{2}\right)\Gamma\left(\frac{\ell}{2}\right)}M^{(2)}(T)\\
&\leq C\frac{T^{\frac{k-1}{2}}}{\Gamma\left(\frac{k-2}{2}\right)}(M^{(1)}(T)+M^{(2)}(T)).
\end{align*}

Thus, we have that \begin{align*}
P_\Z\left[|J^{(k)}(T,x)-J^{(k)}(T,y)|^2\right]\leq \frac{CT^{\frac{k-1}{2}}}{2^k\Gamma(\frac{k-2}{2})}P_\Z\left[|J^{(1)}(T,x)-J^{(1)}(T,y)|^2\right],
\end{align*}
where $C$ is a constant independent of $k$.

By hypercontractivity of Wiener chaos \cite[Theorem 5.10]{Jan}, we have that for $p\geq 2$\begin{align*}
P_\Z\left[|I_2(T,x)-I_2(T,y)|^p\right]^{1/p}&\leq \sum_{k\geq 0}P_\Z\left[|J^{(k)}(T,x)-J^{(k)}(T,y)|^p\right]^{1/p}\\
&\leq \sum_{k\geq 0}(p-1)^{k/2}P_\Z\left[|J^{(k)}(T,x)-J^{(k)}(T,y)|^2\right]^{1/2}\\
&\leq C_p|x-y|^{1/2}\exp\left(-C'a(T)\right).
\end{align*}

Thus,  (\ref{i2t}) holds with $p=\frac{10}{\theta}$ and $q=\frac{\theta}{4}$.

\end{proof}

\begin{proof}[Proof of Lemma \ref{zc}]
It is clear that 
\begin{align*}
\sup_{x\in [-1,1]}|I_1(T,x)|^\theta&\leq \sup_{x\in [-1,1],w\in A(T)}\left|\frac{\Z_{\sqrt{2}}^x(1,w)}{\rho_L(w-1)+\rho_L(w+1)}\right|^\theta\\
&\hspace{2em}\times \left( \int_{\mathbb{R}}\left(\rho_L(u-1)+\rho_L(u+1)\right)\Z_{\sqrt{2}}^{(1,u)}(T)du\right)^\theta,
\end{align*}
where $L\in \mathbb{N}$ is taken large later. Thus, we have that \begin{align*}
P_\Z\left[\sup_{x\in[-1,1]}|I_1(T,x)|^\theta\right]&\leq P_\Z\left[\sup_{x\in [-1,1],w\in A(T)}\left|\frac{\Z_{\sqrt{2}}^x(1,w)}{\rho_L(w-1)+\rho_L(w+1)}\right|^\theta\right]\\
&\times P_\Z\left[ \left( \int_{\mathbb{R}}\left(\rho_L(u-1)+\rho_L(u+1)\right)\Z_{\sqrt{2}}^{(1,u)}(T)du\right)^\theta\right].
\end{align*}

If there exists a constant $C>0$ such that  \begin{align}
P_\Z\left[\sup_{x\in [-1,1],w\in [2k-1,2k+1]}\left|\frac{\Z_{\sqrt{2}}^x(1,w)}{\rho_L(w-1)+\rho_L(w+1)}\right|\right]\leq C\label{boundedat}
\end{align}
for $k\in \mathbb{Z}$, then we have \begin{align*}
&P_\Z\left[\sup_{x\in[-1,1]}|I_1(T,x)|^\theta\right]\\
&\leq C^\theta a(T)P_\Z\left[ \left( \int_{\mathbb{R}}\left(\rho_L(u-1)+\rho_L(u+1)\right)\Z_{\sqrt{2}}^{(1,u)}(T)du\right)^\theta\right]
\end{align*}
and therefore \begin{align*}
&\varlimsup_{\theta\to 0}\varlimsup_{T\to\infty}\frac{1}{T\theta}\log P_\Z\left[\sup_{x\in[-1,1]}|I_1(T,x)|^\theta\right]\\
&\leq \varlimsup_{\theta\to 0}\varlimsup_{T\to\infty}\frac{1}{T\theta}\log P_\Z\left[ \left( \int_{\mathbb{R}}\left(\rho_L(u-1)+\rho_L(u+1)\right)\Z_{\sqrt{2}}^{(1,u)}(T)du\right)^\theta\right].
\end{align*}

Also, we know that \begin{align*}
&P_\Z\left[ \left( \int_{\mathbb{R}}\rho_L(u-1)\Z_{\sqrt{2}}^{(1,u)}(T)du\right)^\theta\right]\\
&=P_\Z\left[ \left( \Z_{\sqrt{2}}^1(T+L)\frac{1}{\dis \int_{\mathbb{R}}\frac{\Z_{\sqrt{2}}^1(L,u)}{\rho_L(u-1)}\nu^{(1,L)}(u)du}\right)^\theta\right]\\
&\leq P_\Z\left[ \left( \Z_{\sqrt{2}}^1(T+L)\right)^{\frac{\theta}{1-\theta}}\right]^{1-\theta}P_\Z\left[\frac{1}{\dis \int_{\mathbb{R}}\frac{\Z_{\sqrt{2}}^1(L,u)}{\rho_L(u-1)}\nu^{(1,L)}(u)du}\right]^\theta\\
&\leq P_\Z\left[ \left( \Z_{\sqrt{2}}^1(T+L)\right)^{\frac{\theta}{1-\theta}}\right]^{1-\theta}P_\Z\left[\frac{{\rho_L(u-1)}}{{\Z_{\sqrt{2}}^1(L,u)}}\right]^\theta\\
&\leq CP_\Z\left[ \left( \Z_{\sqrt{2}}^1(T+L)\right)^{\frac{\theta}{1-\theta}}\right]^{1-\theta},
\end{align*}
where $\nu^{(1,L)}(u)$ is the probability density function on $\mathbb{R}$ given by \begin{align*}
\nu^{(1,L)}(u)=\frac{1}{\dis \int_{\mathbb{R}}\rho_L(u-1)\Z_{\sqrt{2}}^{(1,u)}(T)du}\rho_L(u-1)\Z_{\sqrt{2}}^{(1,u)}(T).
\end{align*} 
Then, we have  from Lemma \ref{wttheta} that \begin{align*}
&\varlimsup_{\theta\to 0}\varlimsup_{T\to \infty}\frac{1}{T\theta}\log P_\Z\left[ \left( \int_{\mathbb{R}}\rho_L(u-1)\Z_{\sqrt{2}}^{(1,u)}(T)du\right)^\theta\right]\\
&\leq \mathcal{F}_\Z(\sqrt{2})
\end{align*}
and we can  complete the proof of Lemma \ref{ttheta}.

We will prove (\ref{boundedat}).

\vspace{2em}
We consider a function on $[-1,1]\times \mathbb{R}$
\begin{align*}
f(x,w)=\frac{\Z_{\sqrt{2}}^x(1,w)}{\rho_L(w-1)+\rho_L(w+1)}.
\end{align*}

Then, we have  from Lemma \ref{cdpprop} (i) that \begin{align*}
P_\Z\left[f(x,w)\right]= \frac{\rho_1(x,w)}{\rho_L(w-1)+\rho_L(w+1)}\leq C_L.
\end{align*}

Also, if  $w\geq w'\geq 1$, \begin{align*}
|f(x,w)-f(x',w')|\leq &\left|\frac{\Z_{\sqrt{2}}^x(1,w)-\Z_{\sqrt{2}}^{x'}(1,w')}{\rho_L(w-1)+\rho_L(w+1)}\right|\\
&+\Z_{\sqrt{2}}^{x'}(1,w')\left|\frac{1}{\rho_L(w-1)+\rho_L(w+1)}-\frac{1}{\rho_L(w'-1)+\rho_L(w'+1)}\right|\\
\leq & \left|\frac{\Z_{\sqrt{2}}^x(1,w)-\Z_{\sqrt{2}}^{x'}(1,w')}{\rho_L(w-1)+\rho_L(w+1)}\right|\\
&+\frac{|w-w'|^2}{L}\frac{\Z_{\sqrt{2}}^{x'}(1,w')\rho_L(w+1)}{\rho_L(w'-1)^2}.
\end{align*}
We can treat the case $w,w'\leq -1$ in the same manner and if $w,w'\in [-1,1]$, then it is clear that \begin{align*}
|f(x,w)-f(x',w')|&\leq C(L)\left(\left|{\Z_{\sqrt{2}}^x(1,w)-\Z_{\sqrt{2}}^{x'}(1,w')}\right|+|w-w'|^2\right).
\end{align*}

Thus, if we show for some $p\geq 1$ and $q>0$ with $pq>4$, there exists $\eta_p>pq-2$ such that \begin{align}
&P_\Z\left[\left|{\Z_{\sqrt{2}}^x(1,w)-\Z_{\sqrt{2}}^{x'}(w')}\right|^p\right]\\
&\leq C(L)\left(|x-x'|^{\eta_p}+|w-w'|^{\eta_p}\right)\left(\rho_L(|w|\vee |w'|-1)^p+\rho_L(|w|\vee |w'|+1)^p\right)\label{1timesup}
\intertext{and}
&P_\Z\left[\Z_{\sqrt{2}}^{x'}(1,w')^p\right]\leq C(L)\left(\rho_L(|w|\vee |w'|-1)^p+\rho_L(|w|\vee |w'|+1)^p\right),\label{1timesp}
\end{align}
then we can apply Lemma \ref{Gar} with $d=2$ to $f(x,w)$ and we obtain (\ref{boundedat}).

$\Z_{\sqrt{2}}^x(1,w)$ has the Wiener chaos representation \begin{align*}
\Z_{\sqrt{2}}^x(1,w)&=\rho_1(w-x)+\sum_{k\geq 1}\sqrt{2}^k\int_{D_k(1)} \rho^{(k)}(x,w;{\bf t},{\bf x})W(t_1,x_1)\cdots W(t_k,x_k)\\
&=\sum_{k\geq 0}\sqrt{2}^k{K^{(k)}(x,w)},
\end{align*}
where \begin{align*}
\rho^{(k)}(x,w;{\bf t},{\bf x})=\rho_{t_1}(x_1-x)\prod_{i=2}^k\rho_{t_i-t_{i-1}}(x_i-x_{i-1})\rho_{1-t_k}(w-x_k).
\end{align*}
Then, we have from (\ref{hfull}) that \begin{align*}
P_\Z\left[\left(K^{(k)}(x,w)\right)^2\right]=\frac{1}{2^{k+1}\Gamma\left(\frac{k+1}{2}\right)}\exp\left(-(x-w)^2\right),
\end{align*}
and hypercontractivity implies that \begin{align*}
&P_\Z\left[\Z_{\sqrt{2}}^{x'}(1,w')^p\right]\leq \left(\sum_{k\geq 0}(p-1)^{\frac{k}{2}}\left(\frac{1}{2^{k+1}\Gamma\left(\frac{k+1}{2}\right)}\exp\left(-(x'-w')^2\right)\right)^{1/2}\right)^p\\
&\leq C(p)\exp\left(-\frac{p(x'-w')^2}{2}\right).
\end{align*}

Then, we have that for $k\geq 2$\begin{align*}
&P_\Z\left[\left|K^{(k)}(x,y)-K^{(k)}(x',y')\right|^2\right]\\
&=\int_{D_k(1)}\int_{\mathbb{R}^k}\left(\rho_{{t_1}}(x,x_1)\rho_{1-t_k}(x_k,y)-\rho_{t_1}(x',x_1)\rho_{1-t_k}(x_k,y')\right)^2\prod_{i=1}^{k-1}\rho_{(t_{i+1}-t_i)}(x_{i},x_{i+1})^2d{\bf x}_kd{\bf t}_k\\
&=\frac{1}{2^{k-1}\Gamma\left(\frac{k-1}{2}\right)}\int_{0}^1\int_{s}^1(t-s)^{\frac{k-3}{2}}\left(\rho_{2s}(0)\rho_{2({1-t})}(0)\rho_{\frac{1}{2}}(x,y)+\rho_{2s}(0)\rho_{2({1-t})}(0)\rho_{\frac{1}{2}}(x',y')\right.\\
&\hspace{10em}\left.-2\rho_{2s}(x,x')\rho_{2(1-t)}(y,y')\rho_{\frac{1}{2}}\left(\frac{y+y'}{2}\frac{x+x'}{2}\right)\right)dtds\\
&=\frac{1}{2^{k+1}\Gamma\left(\frac{k+1}{2}\right)}\left(\rho_{\frac{1}{2}}(x,y)+\rho_{\frac{1}{2}}(x',y')-2\rho_{\frac{1}{2}}\left(\frac{x+x'}{2},\frac{y+y'}{2}\right)\right)\\
&+\frac{2}{2^{k-1}\Gamma\left(\frac{k-1}{2}\right)}\int_0^1\int_s^1(t-s)^{\frac{k-3}{2}}\rho_{\frac{1}{2}}\left(\frac{x+x'}{2},\frac{y+y'}{2}\right)\rho_{2s}(0)\left(\rho_{2({1-t})}(0)-\rho_{2(1-t)}(y,y')\right)dtds\\
&+\frac{2}{2^{k-1}\Gamma\left(\frac{k-1}{2}\right)}\int_0^1\int_s^1(t-s)^{\frac{k-3}{2}}\rho_{\frac{1}{2}}\left(\frac{x+x'}{2},\frac{y+y'}{2}\right)\rho_{2(1-t)}(y,y')\left(\rho_{2s}(0)-\rho_{2s}(x,x')\right)dtds\\
&\leq \frac{C(|x-x'|+|y-y'|)}{2^{k-1}\Gamma\left(\frac{k-1}{2}\right)}\exp\left(-\frac{(|y|\vee |y'|-1)^2 }{2}\right).
\end{align*}

Also, we can estimate that \begin{align*}
P_\Z\left[\left|K^{(0)}(x,y)-K^{(0)}(x',y')\right|^2\right]\leq C(|x-x'|^2+|y-y'|^2)\exp\left(-(|y|\vee |y'|-1)^2\right)
\end{align*}
and \begin{align*}
&P_\Z\left[\left|K^{(1)}(x,y)-K^{(1)}(x',y')\right|^2\right]\\
&\leq \int_0^1\left(\rho_{2s}(0)\rho_{2(-1s)}(0)\left(\rho_{\frac{1}{2}}(x,y)+\rho_{\frac{1}{2}}(x',y')\right)\right.\\
&\hspace{5em}
\left.-2\rho_{2s}(x,x')\rho_{2(1-s)}(y,y')\rho_{\frac{1}{2}}\rho\left(\frac{x+x'}{2},\frac{y+y'}{2}\right)\right)ds\\
&\leq {C(|x-x'|+|y-y'|)}\exp\left(-\frac{(|y|\vee |y'|-1)^2 }{2}\right).
\end{align*}

Then, hypercontractivity implies that \begin{align*}
&P_\Z\left[\left|\Z_{\sqrt{2}}^x(1,w)-\Z_{\sqrt{2}}^{x'}(1,w')\right|^p\right]\\
&\leq \left(|x-x'|^{\frac{p}{2}}+|w-w'|^{\frac{p}{2}}\right)\left(\sum_{k\geq 0}(p-1)^{\frac{k}{2}}\left(\frac{C}{2^{k-1}\Gamma\left(\frac{k-1}{2}\right)}\exp\left(-\frac{2(w^2+w'^2)}{L}\right)\right)^{1/2}\right)^p\\
&\leq C(p)(|x-x'|^{\frac{p}{2}}+|w-w'|^{\frac{p}{2}})\exp\left(-\frac{2p(w^2+w'^2)^2}{L}\right)
\end{align*}
for $L$ large enough.

Thus, we have confirmed (\ref{1timesup}) and (\ref{1timesp}). Therefore, we completed the proof of Lemma \ref{ttheta}.
\end{proof}

Finally, we need to prove the free energy $F_\Z(\sqrt{2})=-\dis \frac{1}{6}$. The proof is a modification of the proof of Lemma \ref{ttheta}.

\begin{proof}[Proof of Lemma \ref{freeenecdp}]
It is easy to see that for  $a'(T)\in [0,\infty)$\begin{align*}
P_\Z\left[\Z_{\sqrt{2}}(T)^\theta\right]&\leq \sum_{k=-a'(T)}^{a'(T)}P_\Z\left[\left(\int_{2k-1}^{2k+1}\Z_{\sqrt{2}}(T,x)dx\right)^\theta\right]\\
&+P_\Z\left[\int_{-\infty}^{-a'(T)}\Z_{\sqrt{2}}(T,x)dx\right]^\theta+P_\Z\left[\int^{\infty}_{a'(T)}\Z_{\sqrt{2}}(T,x)dx\right]^\theta.
\end{align*}

If $\lim_{T\to \infty}\frac{a'(T)}{T^3}>0$, then \begin{align*}
\varlimsup_{T\to\infty}\frac{1}{T}\log \left(P_\Z\left[\int_{-\infty}^{-a'(T)}\Z_{\sqrt{2}}(T,x)dx\right]^\theta+P_\Z\left[\int^{\infty}_{a'(T)}\Z_{\sqrt{2}}(T,x)dx\right]^\theta\right)=-\infty.
\end{align*}

We denote \begin{align*}
\exp\left(A_{\sqrt{2}}(T,x)\right)=\frac{\Z_{\sqrt{2}}(T,x)}{\rho_T(x)},\ \ T>0,\ \ x\in\mathbb{R}.
\end{align*}
Then, we find that \begin{align*}
P_\Z\left[\left(\int_{2k-1}^{2k+1}\Z_{\sqrt{2}}(T,x)dx\right)^\theta\right]&=P_\Z\left[\left(\int_{-1}^1\exp\left(A_{\sqrt{2}}(T,x)\right)\rho_T(x+2k)dx\right)^\theta\right]\\
&\leq P_\Z\left[\sup_{x\in[-1,1]}\exp\left(\theta A_{\sqrt{2}}(T,x)\right)\right]\int_{-1}^1\rho_{T}(x+2k)^\theta dx.
\end{align*}

Since \begin{align*}
\sum_{k=-\infty}^\infty \int_{-1}^1\rho_{T}(x+2k)^\theta dx<\infty,
\end{align*}
it is enough to show that \begin{align*}
\varlimsup_{\theta\to 0}\varlimsup_{T\to\infty}\frac{1}{T\theta}\log P_\Z\left[\sup_{x\in[-1,1]}\exp\left(\theta A_{\sqrt{2}}(T,x)\right)\right]\leq -\frac{1}{6}.
\end{align*}

When we consider the time reversal, it is enough to show that \begin{align*}
\varlimsup_{\theta\to 0}\varlimsup_{T\to\infty}\frac{1}{T\theta}\log P_\Z\left[\sup_{x\in[-1,1]}\Z^{x}_{\sqrt{2}}(T,x)^\theta\right]\leq -\frac{1}{6}.
\end{align*}
We know 
\begin{align*}
\Z^{x}_{\sqrt{2}}(T,x)&=\int_{A(T)}\Z^x_{\sqrt{2}}(1,w)\int_\mathbb{R}\Z_{\sqrt{2}}(1,w;T,0)dw\\
&+\int_{A(T)^c}\Z^x_{\sqrt{2}}(1,w)\int_\mathbb{R}\Z_{\sqrt{2}}(1,w;T,0)dw\\
&=I'_1(T,x)+I_2'(T,x)
\end{align*}
in a similar manner to the proof of Lemma \ref{ttheta}. Then, we find that \begin{align*}
P_\Z\left[\left(\Z^{x}_{\sqrt{2}}(T,x)\right)^\theta\right]\leq P_\Z\left[\sup_{x\in[-1,1]}|I_1'(T,x)|^\theta\right]+P_\Z\left[\sup_{x\in[-1,1]}|I_2'(T,x)|^\theta\right]
\end{align*}
and we obtain by using the same argument as the proof of Lemma \ref{ttheta} that \begin{align*}
\varlimsup_{\theta\to 0}\varlimsup_{T\to\infty}\frac{1}{T\theta}\log P_\Z\left[\sup_{x\in[-1,1]}|I_1'(T,x)|^\theta\right]&\leq \varlimsup_{T\to \infty}\frac{1}{T}P_\Z\left[\log \Z_{\sqrt{2}}(T,0)\right]=-\frac{1}{6}
\intertext{and}
\varlimsup_{\theta\to 0}\varlimsup_{T\to\infty}\frac{1}{T\theta}\log P_\Z\left[\sup_{x\in[-1,1]}|I_2'(T,x)|^\theta\right]&=-\infty.
\end{align*}
\end{proof}

\begin{appendix}
\section{Some formulas for heat kernel}
Here, we give some formulas for calculations in the proofs. We set \begin{align*}
\rho_t(x-y)=\rho_t(x,y)=\frac{1}{\sqrt{2\pi t}}\exp\left(-\frac{(y-x)^2}{2t}\right)
\end{align*} 
for $x,y\in \mathbb{R}$ and $t>0$. Then, we have that for $k\geq 1$
\begin{align}
&\rho_t(x)^2=\frac{1}{2\sqrt{\pi t}}\rho_{\frac{t}{2}}(x),\label{hsqare}\\
&\rho_{t}(x,w)\rho_t(y,w)=\rho_{2t}(x,y)\rho_{\frac{t}{2}}\left(\frac{x+y}{2},w\right),\label{hprod}\\
&\int_\mathbb{R}\rho_s(x,y)\rho_t(y,z)dy=\rho_{t+s}(x,z),\label{hconv}\\
&\int_{\mathbb{R}^k}\rho_{t_1-t_0}(x,x_1)^2\prod_{i=1}^{k-1}\rho_{(t_{i+1}-t_i)}{(x_{i},x_{i+1})}^2\rho_{t_{k+1}-t_k}(x_k,y)^2d{\bf x}_k\notag\\
&=\frac{1}{2^{k+1}\pi^{\frac{k+1}{2}}}\rho_{\frac{t_{k+1}-t_0}{2}}(x,y)\prod_{i=0}^{k}\frac{1}{\sqrt{t_{i+1}-t_i}},\label{hsqconv}\\
&\int_{t_0}^{t_2}\int_\mathbb{R}\rho_{t_1-t_0}(x_0,x_1)^2dx_1dt_1=\frac{\sqrt{t_2-t_0}}{\sqrt{\pi}}\label{h2}\\
&\int_{D_k(t_0,t_{k+1})}\int_{\mathbb{R}^k}\rho_{t_1-t_0}(x,x_1)^2\prod_{i=1}^{k-1}\rho_{(t_{i+1}-t_i)}(x_{i},x_{i+1})^2\rho_{t_{k+1}-t_k}(x_k,y)^2d{\bf x}_kd{\bf t}_k\notag\\
&=\frac{(t_{k+1}-t_0)^{\frac{k-1}{2}}}{2^{k+1}\Gamma\left(\frac{k+1}{2}\right)}\rho_{\frac{t_{k+1}-t_0}{2}}(x,y),\label{hfull}\\
&\int_{D_k(s,t)}\int_{\mathbb{R}^k}\rho_{t_1-t_0}(x,x_1)^2\prod_{i=1}^{k}\rho_{(t_{i+1}-t_i)}(x_{i},x_{i+1})^2d{\bf x}_kd{\bf t}_k\notag\\
&=\frac{(t-s)^{\frac{k}{2}}}{2^{k}\Gamma\left(\frac{k+2}{2}\right)},\label{hfull2}\\
&\int_0^t \left(\rho_s(0)-\rho_s(x)\right)ds\leq \frac{|x|}{2\sqrt{\pi}}\int_{\frac{|x|^2}{2t}}^\infty\frac{1}{u^{\frac{3}{2}}}\left(1-\exp\left(-u\right)\right)du\notag\\
&\hspace{5em}\leq \frac{|x|}{2\sqrt{\pi}}\int_0^\infty u^{-\frac{3}{2}}(1\wedge u)du,\label{hest}
\end{align}
where ${\bf x}_k=(x_1,\cdots,x_k)\in\mathbb{R}^k$, ${\bf t}_k=(t_1,\cdots,t_k)\in [0,\infty)^k$, and \begin{align*}
D_k(s,t)=\{{\bf t}_k\in [0,\infty)^k:s\leq t_1<\cdots<t_k<t\}. 
\end{align*}

\end{appendix}

\vspace{2em}

{\bf Acknowledgement:} 
This research was supported by JSPS Grant-in-Aid for Young Scientists (B) 26800051.



\end{document}